\documentclass[oneside,english,reqno]{amsart}
\usepackage[T1]{fontenc}
\usepackage[latin9]{inputenc}
\usepackage{geometry}
\geometry{verbose,lmargin=4cm}
\usepackage{babel}
\usepackage{refstyle}
\usepackage{mathrsfs}
\usepackage{amsthm}
\usepackage{amstext}
\usepackage{amssymb}
\usepackage{graphicx}
\usepackage{esint}
\PassOptionsToPackage{normalem}{ulem}
\usepackage{ulem}
\usepackage[unicode=true,pdfusetitle,
 bookmarks=true,bookmarksnumbered=false,bookmarksopen=false,
 breaklinks=false,pdfborder={0 0 1},backref=false,colorlinks=false]
 {hyperref}
\hypersetup{
 colorlinks=true,citecolor=blue,linkcolor=blue,linktocpage=true}

\makeatletter


\AtBeginDocument{\providecommand\thmref[1]{\ref{thm:#1}}}
\AtBeginDocument{\providecommand\secref[1]{\ref{sec:#1}}}
\AtBeginDocument{\providecommand\propref[1]{\ref{prop:#1}}}
\AtBeginDocument{\providecommand\lemref[1]{\ref{lem:#1}}}
\AtBeginDocument{\providecommand\defref[1]{\ref{def:#1}}}
\AtBeginDocument{\providecommand\subref[1]{\ref{sub:#1}}}
\providecommand{\tabularnewline}{\\}
\RS@ifundefined{subref}
  {\def\RSsubtxt{section~}\newref{sub}{name = \RSsubtxt}}
  {}
\RS@ifundefined{thmref}
  {\def\RSthmtxt{theorem~}\newref{thm}{name = \RSthmtxt}}
  {}
\RS@ifundefined{lemref}
  {\def\RSlemtxt{lemma~}\newref{lem}{name = \RSlemtxt}}
  {}

\numberwithin{equation}{section}
\numberwithin{figure}{section}
\numberwithin{table}{section}
\usepackage{enumitem}		
\theoremstyle{plain}
\newtheorem{thm}{\protect\theoremname}[section]
  \theoremstyle{definition}
  \newtheorem{defn}[thm]{\protect\definitionname}
  \theoremstyle{remark}
  \newtheorem{rem}[thm]{\protect\remarkname}
  \theoremstyle{plain}
  \newtheorem{lem}[thm]{\protect\lemmaname}
  \theoremstyle{plain}
  \newtheorem{prop}[thm]{\protect\propositionname}
  \theoremstyle{plain}
  \newtheorem{cor}[thm]{\protect\corollaryname}
  \theoremstyle{remark}
  \newtheorem*{claim*}{\protect\claimname}
  \theoremstyle{definition}
  \newtheorem{example}[thm]{\protect\examplename}
  \theoremstyle{remark}
  \newtheorem*{acknowledgement*}{\protect\acknowledgementname}


\providecommand{\MR}[1]{}

\usepackage{needspace}

\newref{lem}{refcmd={Lemma \ref{#1}}}
\newref{thm}{refcmd={Theorem \ref{#1}}}
\newref{cor}{refcmd={Corollary \ref{#1}}}
\newref{sec}{refcmd={Section \ref{#1}}}
\newref{sub}{refcmd={Section \ref{#1}}}
\newref{chap}{refcmd={Chapter \ref{#1}}}
\newref{prop}{refcmd={Proposition \ref{#1}}}
\newref{exa}{refcmd={Example \ref{#1}}}
\newref{tab}{refcmd={Table \ref{#1}}}
\newref{rem}{refcmd={Remark \ref{#1}}}
\newref{def}{refcmd={Definition \ref{#1}}}

\AtBeginDocument{
  
}

\makeatother

  \providecommand{\acknowledgementname}{Acknowledgement}
  \providecommand{\claimname}{Claim}
  \providecommand{\corollaryname}{Corollary}
  \providecommand{\definitionname}{Definition}
  \providecommand{\examplename}{Example}
  \providecommand{\lemmaname}{Lemma}
  \providecommand{\propositionname}{Proposition}
  \providecommand{\remarkname}{Remark}
\providecommand{\theoremname}{Theorem}

\begin{document}

\title{Graph Laplacians and discrete reproducing kernel Hilbert spaces from
restrictions}

\author{Palle Jorgensen and Feng Tian}

\address{(Palle E.T. Jorgensen) Department of Mathematics, The University
of Iowa, Iowa City, IA 52242-1419, U.S.A. }

\email{palle-jorgensen@uiowa.edu}

\urladdr{http://www.math.uiowa.edu/\textasciitilde{}jorgen/}

\address{(Feng Tian) Department of Mathematics, Trine University, Angola,
IN 46703, U.S.A.}

\email{tianf@trine.du}

\subjclass[2000]{Primary 47L60, 46N30, 65R10, 58J65, 81S25.}

\keywords{Reproducing kernel Hilbert space, discrete analysis, graph Laplacians,
distribution of point-masses, Green's functions. }

\maketitle
\pagestyle{myheadings}
\markright{}
\begin{abstract}
We study kernel functions, and associated reproducing kernel Hilbert
spaces $\mathscr{H}$ over infinite, discrete and countable sets $V$.
Numerical analysis builds discrete models (e.g., finite element) for
the purpose of finding approximate solutions to boundary value problems;
using multiresolution-subdivision schemes in continuous domains. In
this paper, we turn the tables: our object of study is realistic infinite
discrete models in their own right; and we then use an analysis of
suitable continuous counterpart problems, but now serving as a tool
for obtaining solutions in the discrete world.

\end{abstract}

\tableofcontents{}

\section{Introduction}

In a number of recent papers, kernel tools have found new applications,
and a number of them depend on an interplay between continuous vs
discrete; so between, (i) more classical continuous kernel models,
and (ii) various discretization procedures; see the citations below.
Applications of kernel tools include optimization, maximum-likelihood
constructs, and machine learning models; they all entail a combination
of analysis tools for a variety of reproducing kernels, and the associated
reproducing kernel Hilbert spaces (RKHS); as well as probabilistic
sampling and estimation; -- all issues involving theorems for RKHSs.
So far the emphasis has been on the continuous models, and we shall
turn the table in the present paper. We shall make use of a class
of discrete RKHSs which are typically associated with Gaussian free
fields, and determinantal point process, and associated determinantal
measures. The purpose of our paper is to offer a systematic approach
to these questions. Our approach is motivated in part by the way the
classical Cameron-Martin RKHS is used in the analysis of Brownian
motion, and related Gaussian processes. We are concerned with a characterization
of those RKHSs $\mathscr{H}$ of functions, on some state space $V$,
which contain the Dirac masses $\delta_{x}$ for all points $x$ in
$V$.

Our setting is that of infinite discrete models vs their continuous
counterparts. Our discrete analysis setting is as follows: Given is
an infinite set $V$ of vertices, and a set $E$ of edges, contained
in $V\times V\backslash\left\{ \mbox{the diagonal}\right\} $. Further,
a positive symmetric function $c$ on $E$ is prescribed ($c$ is
conductance in electrical networks), and there is a resulting graph
Laplacian. Its spectral theory will be considered. In this setting,
we arrive at a host of network models, whose analysis all involve
kernels. Connectedness for our infinite graphs will be assumed. The
relevant RKHSs are certain discrete Dirichlet spaces $\mathscr{H}_{\left(E,c\right)}$
consisting of finite energy-functions, and modeled on the classical
Cameron-Martin spaces; these discrete variants are Hilbert spaces
of functions on $V$ (modulo constants), and depending on the choice
of $\left(V,E,c\right)$. It is always the case that the differences
$f\left(x\right)-f\left(y\right)$ for $f$ in $\mathscr{H}_{\left(E,c\right)}$,
for any given pair of vertices $x$ and $y$, is well behaved: The
differences (voltage-drop) will be represented by a kernel (in $\mathscr{H}_{\left(E,c\right)}$),
depending on the pair $x$, $y$. By contrast, point-evaluation itself
(for a single vertex) does not automatically have a kernel representation
by a function in $\mathscr{H}_{\left(E,c\right)}$.

Our first result (\thmref{del}) gives a necessary and sufficient
condition for existence of $\mathscr{H}_{\left(E,c\right)}$-kernels
for point-evaluation, so a kernel associated to a fixed vertex $x_{0}$.
It follows from this that, when a vertex $x_{0}$ is given, the answer
to this question involves the entire vertex-set $V$, so including
distant vertices and also boundary considerations. The question of
deciding existence of finite-energy point-kernels has applications.
For example, each setting sketched above gives rise to an associated
Markov chain model involving a reversible random walk. It is known
that the random walk is transient if and only if finite-energy point-kernels
exist.

In Sections \ref{sec:dRKHS}-\ref{sec:drkhsv}, we turn to a family
of generalized Cameron-Martin spaces; -- in Theorems \ref{thm:main}
and \ref{thm:dk7}, we give an explicit comparison between the continuous
vs the discrete variants.

A \emph{reproducing kernel Hilbert space} (RKHS) is a Hilbert space
$\mathscr{H}$ of functions on a prescribed set, say $V$, with the
property that point-evaluation for $f\in\mathscr{H}$ is continuous
with respect to the $\mathscr{H}$-norm. They are called kernel spaces,
because, for every $x\in V$, the point-evaluation for functions $f\in\mathscr{H}$,
$f\left(x\right)$ must then be given as a $\mathscr{H}$-inner product
of $f$ and a vector $k_{x}$, in $\mathscr{H}$; called the kernel.
See (\ref{eq:pd31}) below.

\textbf{Background.} RKHSs have been studied extensively since the
pioneering papers by Aronszajn in the 1940ties, see e.g., \cite{Aro43,Aro48}.
They further play an important role in the theory of partial differential
operators (PDO); for example as Green's functions of second order
elliptic PDOs; see e.g., \cite{Nel57,HKL14,MR2301309}. Other applications
include engineering, physics, machine-learning theory (see \cite{KH11,SZ09,CS02}),
stochastic processes (e.g., Gaussian free fields), numerical analysis,
and more. See, e.g., \cite{AD93,ABDdS93,AD92,AJSV13,AJV14,BTA04}.
Also, see \cite{MR2089140,MR2607639,MR2913695,MR2975345,MR3091062,MR3101840,MR3201917}.
But the literature so far has focused on the theory of kernel functions
defined on continuous domains, either domains in Euclidean space,
or complex domains in one or more variables. For these cases, the
Dirac $\delta_{x}$ distributions do not have finite $\mathscr{H}$-norm.
But for RKHSs over discrete point distributions, it is reasonable
to expect that the Dirac $\delta_{x}$ functions will in fact have
finite $\mathscr{H}$-norm.

Here we consider the discrete case, i.e., RKHSs of functions defined
on a prescribed countable infinite discrete set $V$. We are concerned
with a characterization of those RKHSs $\mathscr{H}$ which contain
the Dirac masses $\delta_{x}$ for all points $x\in V$. Of the examples
and applications where this question plays an important role, we emphasize
two: (i) discrete Brownian motion-Hilbert spaces, i.e., discrete versions
of the Cameron-Martin Hilbert space \cite{Fer03,HH93}; (ii) energy-Hilbert
spaces corresponding to graph-Laplacians. 

Our setting is a given positive definite function $k$ on $V\times V$,
where $V$ is discrete (see above). We study the corresponding RKHS
$\mathscr{H}\left(=\mathscr{H}\left(k\right)\right)$ in detail. 

A positive definite kernel $k$ is said to be \emph{universal} \cite{CMPY08}
if, every continuous function, on a compact subset of the input space,
can be uniformly approximated by sections of the kernel, i.e., by
continuous functions in the RKHS. We show that for the RKHSs from
kernels $k_{c}$ in electrical network $G$ of resistors, this universality
holds. The metric in this case is the resistance metric on the vertices
of $G$, determined by the assignment of a conductance function $c$
on the edges in $G$, see \secref{drkhsv} below.

The problems addressed here are motivated in part by applications
to analysis on \emph{infinite weighted graphs}, to stochastic processes,
and to numerical analysis (discrete approximations), and to applications
of RKHSs to machine learning. Readers are referred to the following
papers, and the references cited there, for details regarding this:
\cite{MR3231624,MR2966130,MR2793121,MR3286496,MR3246982,MR2862151,MR3096457,MR3049934,MR2579912,MR741527,MR3024465}.

\textbf{Infinite Networks.} While the natural questions for the case
of large (or infinite) networks, \textquotedblleft the discrete world,\textquotedblright{}
have counterparts in the more classical context of partial differential
operators/equations (PDEs), the analysis on the discrete side is often
done without reference to a continuous PDE-counterpart.

The purpose of the present paper is to try to remedy this, to the
extent it is possible. We begin with the discrete context (\thmref{del}).
And we proceed to show that, in the discrete case, our analysis depends
on two tools, (i) positive definite (p.d.) functions, and associated
RKHSs, and (ii) (resistance) metrics. Both may be studied as purely
discrete objects, but nonetheless, in several of our results (including
the corollaries in Sections \ref{sec:net} and \ref{sec:drkhsv}),
we give contexts for continuous counterparts to the two discrete tools,
(i) and (ii). We make precise how to use the continuous counterparts
for computations in explicit discrete models, and in the associated
RKHSs.

In Theorems \ref{thm:main} and \ref{thm:main2} we give such concrete
(countable infinite) discrete models which can be understood as restrictions
of analogous PDE-models. In traditional numerical analysis, one builds
clever discrete models (finite element methods) for the purpose of
finding approximate solutions to PDE-boundary value problems. They
typically use multiresolution-subdivision schemes, applied to the
continuous domain, subdividing into simpler discretized parts, called
finite elements. And with variational methods, one then minimize various
error-functions. In this paper, we turn the tables: our object of
study are the discrete models, and analysis of suitable continuous
PDE boundary problems serve as a tool for solutions in the discrete
world.

\section{\label{sec:drkhs}Discreteness in reproducing kernel Hilbert spaces}
\begin{defn}
Let $V$ be a set, and $\mathscr{F}\left(V\right)$ denotes the set
of all \emph{finite} subsets of $V$. A function $k:V\times V\rightarrow\mathbb{C}$
is said to be \emph{positive definite}, if 
\begin{equation}
\underset{\left(x,y\right)\in F\times F}{\sum\sum}k\left(x,y\right)\overline{c_{x}}c_{y}\geq0\label{eq:pd1}
\end{equation}
holds for all coefficients $\{c_{x}\}_{x\in F}\subset\mathbb{C}$,
and all $F\in\mathscr{F}\left(V\right)$. 
\end{defn}

\begin{defn}
\label{def:d1}Fix a countable infinite set $V$.
\begin{enumerate}
\item For all $x\in V$, set 
\begin{equation}
k_{x}:=k\left(\cdot,x\right):V\rightarrow\mathbb{C}\label{eq:pd2}
\end{equation}
as a function on $V$. 
\item Let $\mathscr{H}:=\mathscr{H}\left(k\right)$ be the Hilbert-completion
of the $span\left\{ k_{x}:x\in V\right\} $, with respect to the inner
product 
\begin{equation}
\left\langle \sum_{x\in F}c_{x}k_{x},\sum_{y\in F}d_{y}k_{y}\right\rangle _{\mathscr{H}}:=\underset{F\times F}{\sum\sum}\overline{c_{x}}d_{y}k\left(x,y\right),\quad F\in\mathscr{F}\left(V\right),\label{eq:pd3}
\end{equation}
modulo the subspace of functions of zero $\mathscr{H}$-norm, i.e.,
\[
\underset{F\times F}{\sum\sum}\overline{c_{x}}c_{y}k\left(x,y\right)=0.
\]
$\mathscr{H}$ is then a reproducing kernel Hilbert space (RKHS),
with the reproducing property:
\begin{equation}
\left\langle k_{x},\varphi\right\rangle _{\mathscr{H}}=\varphi\left(x\right),\quad\forall x\in V,\:\forall\varphi\in\mathscr{H}.\label{eq:pd31}
\end{equation}

\item If $F\in\mathscr{F}\left(V\right)$, set $\mathscr{H}_{F}=\text{closed\:\ span}\{k_{x}\}_{x\in F}\subset\mathscr{H}$,
(closed is automatic as $F$ is finite.) And set 
\begin{equation}
P_{F}:=\text{the orthogonal projection onto \ensuremath{\mathscr{H}_{F}}}.\label{eq:pd4}
\end{equation}

\end{enumerate}
\end{defn}
\begin{rem}
The summations in (\ref{eq:pd3}) are all finite. We use physicists'
convention, so that the inner product in (\ref{eq:pd3}) is conjugate
linear in the first variable, and linear in the second variable.
\end{rem}
We shall need the following lemma:
\begin{lem}
\label{lem:s2-4}Let $k:V\times V\rightarrow\mathbb{C}$ be a positive
definite kernel, and let $\mathscr{H}$ be the corresponding RKHS.
Then a function $\xi$ on $V$ is in $\mathscr{H}$ if and only if
there is a constant $C<\infty$ such that, for all finite subsets
$F\subset V$, and all $\left\{ c_{x}\right\} _{x\in F}\subset\mathbb{C}^{\#F}$,
we have:
\begin{equation}
\left|\sum_{x\in F}c_{x}\xi\left(x\right)\right|^{2}\leq C\underset{F\times F}{\sum\sum}\overline{c_{x}}c_{y}k\left(x,y\right).
\end{equation}
\end{lem}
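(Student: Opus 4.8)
The plan is to prove both implications, using the concrete description of $\mathscr{H}$ as the completion of $\operatorname{span}\{k_x\}$ under the inner product \eqref{eq:pd3}, together with the reproducing property \eqref{eq:pd31}.

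\emph{The necessity direction.} Suppose $\xi\in\mathscr{H}$. The key observation is that for any finite $F\subset V$ and coefficients $\{c_x\}_{x\in F}$, the vector $h:=\sum_{x\in F}\overline{c_x}\,k_x$ lies in $\mathscr{H}$ and, by \eqref{eq:pd3}, has $\|h\|_{\mathscr{H}}^2=\sum\sum_{F\times F}\overline{c_x}c_y\,k(x,y)$. Moreover, by the reproducing property \eqref{eq:pd31} applied termwise, $\langle h,\xi\rangle_{\mathscr{H}}=\sum_{x\in F}c_x\,\xi(x)$. Then the Cauchy--Schwarz inequality in $\mathscr{H}$ gives $\bigl|\sum_{x\in F}c_x\,\xi(x)\bigr|^2=|\langle h,\xi\rangle_{\mathscr{H}}|^2\le\|h\|_{\mathscr{H}}^2\,\|\xi\|_{\mathscr{H}}^2$, which is exactly the stated inequality with $C=\|\xi\|_{\mathscr{H}}^2<\infty$.

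\emph{The sufficiency direction.} Suppose the inequality holds for some fixed $C<\infty$. First I would use the bound to define a well-defined linear functional. Given the finitely supported formal combination $h=\sum_{x\in F}\overline{c_x}\,k_x$, set $L(h):=\sum_{x\in F}c_x\,\xi(x)$; the hypothesis says $|L(h)|^2\le C\|h\|_{\mathscr{H}}^2$, so in particular $L$ vanishes on vectors of zero $\mathscr{H}$-norm and hence descends to a bounded linear functional on the dense subspace $\operatorname{span}\{k_x\}\subset\mathscr{H}$, extending uniquely to all of $\mathscr{H}$ with $\|L\|\le\sqrt C$. By the Riesz representation theorem there is a unique $\eta\in\mathscr{H}$ with $L(h)=\langle h,\eta\rangle_{\mathscr{H}}$ for all $h\in\mathscr{H}$. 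Testing against $h=k_x$ (i.e. $F=\{x\}$, $c_x=1$) and using \eqref{eq:pd31} gives $\eta(x)=\langle k_x,\eta\rangle_{\mathscr{H}}=L(k_x)=\xi(x)$ for every $x\in V$; hence $\xi=\eta$ as functions on $V$, so $\xi\in\mathscr{H}$.

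\emph{Main obstacle.} The only subtlety—and the step I would state carefully—is the passage from the formal span to $\mathscr{H}$ itself: because $\mathscr{H}$ is a quotient-completion, one must check that $L$ is genuinely well-defined on equivalence classes (different finitely supported representatives giving the same element of $\mathscr{H}$ must yield the same value of $L$), and this is precisely guaranteed by the hypothesis applied to differences of representatives. Once that is in hand, boundedness and the Riesz step are routine, and the identification $\xi=\eta$ via the reproducing property closes the argument.
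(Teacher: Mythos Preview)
Your argument is correct and is precisely the classical Aronszajn characterization; the paper does not give its own proof of this lemma but simply cites \cite{Aro48}, so you have supplied exactly the standard argument that was deferred. One small point of care: with the physicists' convention \eqref{eq:pd3} the functional $L$ you build is conjugate-linear rather than linear, but this matches the form $L(h)=\langle h,\eta\rangle_{\mathscr{H}}$ you use in the Riesz step, so the conclusion is unaffected.
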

\begin{proof}
See \cite{Aro48}.
\end{proof}
A reproducing kernel Hilbert space (RKHS) is a Hilbert space of functions
on some set $S$. If $S$ comes with a topology, it is natural to
study RHHSs $\mathscr{H}$ consisting of continuous functions on $S$.
If $k$ is continuous on $S\times S$, one can show that then the
functions in $\mathscr{H}\left(k\right)$ are also continuous. Except
for trivial cases, the Dirac ``function'' 
\begin{equation}
\delta_{x}\left(y\right)=\begin{cases}
1 & y=x\\
0 & y\neq x
\end{cases}\label{eq:del0}
\end{equation}
is not continuous, and as a result, $\delta_{x}$ will typically \emph{not}
be in $\mathscr{H}$. But the situation is different for discrete
spaces.

We will show that, even if $S$ is a given discrete set (countable
infinite), we still often have $\delta_{x}\notin\mathscr{H}$ for
naturally arising RKHSs $\mathscr{H}$; see also \cite{JT15}. 

Below, we shall concentrate on cases when $S=V$ is a set of \emph{vertices}
in a \emph{graph} $G$ with \emph{edge-set} $E\subset V\times V\backslash\left(\text{diagonal}\right)$,
and on classes of RKHSs $\mathscr{H}$ of functions on $V$, for which
$\delta_{x}\in\mathscr{H}$ for all $x\in V$. 
\begin{defn}
\label{def:dmp}The RKHS $\mathscr{H}$ (in Def. \ref{def:d1}) is
said to have the \emph{discrete mass} property if $\delta_{x}\in\mathscr{H}$,
for all $x\in V$. $\mathscr{H}$ is then called a \emph{discrete
RKHS.}
\end{defn}
The following is immediate. 
\begin{prop}
\label{prop:dh5}Suppose $\mathscr{H}$ is a discrete RKHS of functions
on a vertex-set $V$, as described above; then there is a unique operator
$\Delta$, with dense domain $dom\left(\Delta\right)\subset\mathscr{H}$,
such that
\begin{equation}
\left(\Delta f\right)\left(x\right)=\left\langle \delta_{x},f\right\rangle _{\mathscr{H}},\quad\text{for all \ensuremath{x\in V}, and all \ensuremath{f\in dom\left(\Delta\right)}.}\label{eq:dh1}
\end{equation}

\end{prop}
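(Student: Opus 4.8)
The plan is to manufacture $\Delta$ by the only formula available and then check that the formula leaves no ambiguity. First I would take as domain the dense subspace $dom(\Delta):=span\{k_{x}:x\in V\}\subset\mathscr{H}$, density being immediate from Definition~\ref{def:d1}. Since $\mathscr{H}$ has the discrete mass property (Definition~\ref{def:dmp}), $\delta_{x}\in\mathscr{H}$ for every $x\in V$, so for each $f\in dom(\Delta)$ and each $x\in V$ the inner product $\langle\delta_{x},f\rangle_{\mathscr{H}}$ is a well-defined scalar; I then \emph{define} $\Delta f$ to be the function $V\ni x\mapsto\langle\delta_{x},f\rangle_{\mathscr{H}}$. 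Linearity of $f\mapsto\Delta f$ is clear from linearity of $\langle\delta_{x},\cdot\rangle_{\mathscr{H}}$ in the second slot.

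Second, I would verify that $\Delta$ actually takes values in $\mathscr{H}$, and it suffices to test this on the spanning functions $k_{y}$. Using the reproducing property (\ref{eq:pd31}) together with the fact that $\delta_{x}(y)\in\{0,1\}$ is real, one gets $(\Delta k_{y})(x)=\langle\delta_{x},k_{y}\rangle_{\mathscr{H}}=\overline{\langle k_{y},\delta_{x}\rangle_{\mathscr{H}}}=\overline{\delta_{x}(y)}=\delta_{y}(x)$ for all $x\in V$. Hence $\Delta k_{y}=\delta_{y}\in\mathscr{H}$, and by linearity $\Delta$ is exactly the densely defined operator $k_{y}\longmapsto\delta_{y}$, with range contained in $\mathscr{H}$.

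For uniqueness I would first recall that $\mathscr{H}$ is genuinely a Hilbert space of functions on $V$: if $\varphi\in\mathscr{H}$ satisfies $\varphi(x)=\langle k_{x},\varphi\rangle_{\mathscr{H}}=0$ for every $x$, then $\varphi\perp span\{k_{x}:x\in V\}$, which is dense, so $\varphi=0$; thus an element of $\mathscr{H}$ is determined by its family of point-values. Consequently, on any domain the prescription $(\Delta f)(x)=\langle\delta_{x},f\rangle_{\mathscr{H}}$ pins down $\Delta f$ pointwise, hence determines it as an element of $\mathscr{H}$ — this is the asserted uniqueness. If one prefers the maximal such operator, one simply replaces the domain above by $\{f\in\mathscr{H}:(\langle\delta_{x},f\rangle_{\mathscr{H}})_{x\in V}\text{ is the point-value family of some element of }\mathscr{H}\}$, which still contains $span\{k_{x}:x\in V\}$ and is therefore dense.

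I do not anticipate a real obstacle here; the statement is essentially bookkeeping (indeed the paper calls it immediate). The one point deserving a line of care is the identification of $\mathscr{H}$ with a space of honest functions on $V$, i.e.\ injectivity of $\varphi\mapsto(\varphi(x))_{x\in V}$, since it is precisely this that makes both the assertion $\Delta f\in\mathscr{H}$ and the uniqueness claim meaningful.
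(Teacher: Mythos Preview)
Your proposal is correct and matches the paper's approach: the paper declares the proposition ``immediate'' without proof, and the surrounding text (Theorem~\ref{thm:s2-8} and the remark following it) makes explicit exactly the choice $dom(\Delta)=span\{k_{x}:x\in V\}$ with $\Delta k_{x}=\delta_{x}$ that you write out. Your verification that $\Delta k_{y}=\delta_{y}\in\mathscr{H}$ is in fact the content of Theorem~\ref{thm:s2-8}, so you have folded that in as well; nothing is missing.
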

The operator $\Delta$ from (\ref{eq:dh1}) will be studied in detail
in \secref{net} below. In special cases, it is called a \emph{graph-Laplacian};
and it is a discrete analogue of the classical Laplace operator $-\nabla^{2}$. 
\begin{cor}
Let $k:V\times V\rightarrow\mathbb{C}$, $\#V=\aleph_{0}$, be a positive
definite function, and let $\mathscr{H}$ be the corresponding RKHS.
Assume $\delta_{x}\in\mathscr{H}$ for all $x\in V$; then there are
closable operators
\begin{equation}
\mathscr{H}\xrightarrow{\;T\;}l^{2}\left(V\right),\quad\mbox{and}\quad l^{2}\left(V\right)\xrightarrow{\;S\;}\mathscr{H},\label{eq:aa1}
\end{equation}
such that $T\subseteq S^{*}$, $S\subseteq T^{*}$; and 
\begin{equation}
T\,k_{x}=\delta_{x},\;S\,\delta_{x}=\delta_{x},\;\mbox{for all \ensuremath{x\in V}.}\label{eq:aa2}
\end{equation}
\end{cor}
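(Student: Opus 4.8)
The plan is to build $T$ and $S$ as densely defined operators on the obvious linear spans, to verify a single pairing identity on generators, and then to read off both the adjoint inclusions and the closability assertions from that identity by the standard ``formal adjoint'' argument.

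First I would set $D_{T}:=\mathrm{span}\left\{ k_{x}:x\in V\right\} \subset\mathscr{H}$, which is dense by Definition~\ref{def:d1}, and $D_{S}:=\mathrm{span}\left\{ \delta_{x}:x\in V\right\} \subset l^{2}\left(V\right)$, the finitely supported functions, which is dense in $l^{2}\left(V\right)$. Define $S$ on $D_{S}$ as the linear extension of $S\delta_{x}=\delta_{x}$, where on the left $\delta_{x}$ is the standard basis vector of $l^{2}\left(V\right)$ and on the right it is the element of $\mathscr{H}$ furnished by the hypothesis $\delta_{x}\in\mathscr{H}$; this is unambiguous since the vectors $\delta_{x}$ are linearly independent in $l^{2}\left(V\right)$. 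Define $T$ on $D_{T}$ as the linear extension of $Tk_{x}=\delta_{x}\in l^{2}\left(V\right)$. The one place where the hypothesis is really used is in checking that $T$ is well defined: if $\sum_{x\in F}c_{x}k_{x}=0$ in $\mathscr{H}$ for some $F\in\mathscr{F}\left(V\right)$, then pairing this zero vector with $\delta_{y}\in\mathscr{H}$ and using the reproducing property $(\ref{eq:pd31})$, which gives $\left\langle \delta_{y},k_{x}\right\rangle _{\mathscr{H}}=\overline{\delta_{y}\left(x\right)}=\delta_{x,y}$, yields $c_{y}=0$ for every $y\in F$. Hence $\left\{ k_{x}\right\} _{x\in V}$ is linearly independent in $\mathscr{H}$ and the prescription for $T$ is consistent.

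Next I would establish the pairing identity
\[
\left\langle Tf,g\right\rangle _{l^{2}\left(V\right)}=\left\langle f,Sg\right\rangle _{\mathscr{H}},\qquad f\in D_{T},\ g\in D_{S}.
\]
Since both sides are conjugate linear in $f$ and linear in $g$, it suffices to check it for $f=k_{x}$, $g=\delta_{y}$: the left side is $\left\langle \delta_{x},\delta_{y}\right\rangle _{l^{2}\left(V\right)}=\delta_{x,y}$, and the right side is $\left\langle k_{x},\delta_{y}\right\rangle _{\mathscr{H}}=\delta_{y}\left(x\right)=\delta_{x,y}$ by $(\ref{eq:pd31})$. Taking complex conjugates in the identity gives $\left\langle Sg,f\right\rangle _{\mathscr{H}}=\left\langle g,Tf\right\rangle _{l^{2}\left(V\right)}$ for all $g\in dom\left(S\right)$ and $f\in D_{T}$; by the definition of the adjoint this says exactly that $D_{T}\subseteq dom\left(S^{*}\right)$ with $S^{*}|_{D_{T}}=T$, i.e. $T\subseteq S^{*}$, and symmetrically that $D_{S}\subseteq dom\left(T^{*}\right)$ with $T^{*}|_{D_{S}}=S$, i.e. $S\subseteq T^{*}$.

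Finally, closability follows at once: an operator between Hilbert spaces is closable iff its adjoint is densely defined, and here $dom\left(T^{*}\right)\supseteq D_{S}$ is dense in $l^{2}\left(V\right)$ while $dom\left(S^{*}\right)\supseteq D_{T}$ is dense in $\mathscr{H}$, so both $T$ and $S$ are closable. The only genuinely substantive point in the whole argument is the well-definedness of $T$ in the second step, which is precisely where the discrete mass property $\delta_{x}\in\mathscr{H}$ (Definition~\ref{def:dmp}) is needed; everything else is the routine verification that a densely defined pair satisfying the pairing identity are mutual formal adjoints and hence closable.
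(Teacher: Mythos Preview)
Your proof is correct and follows essentially the same route as the paper's: define $T$ and $S$ on the spans of $\{k_x\}$ and $\{\delta_x\}$, establish the pairing identity $\langle Tf,g\rangle_{l^2}=\langle f,Sg\rangle_{\mathscr{H}}$ (the paper phrases this via Proposition~\ref{prop:dh5}), read off $T\subseteq S^{*}$ and $S\subseteq T^{*}$, and conclude closability from density of the adjoint domains. Your explicit verification that $T$ is well defined---using $\delta_y\in\mathscr{H}$ to show that the $k_x$ are linearly independent---is a point the paper passes over with ``by assumption, $S$ and $T$ are well defined,'' so if anything your write-up is more complete.
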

\begin{proof}
By assumption, $S$ and $T$ are well defined as specified in (\ref{eq:aa1})-(\ref{eq:aa2}),
with 
\begin{alignat*}{2}
dom\left(T\right) & =span\left\{ k_{x}\::\:x\in V\right\}  & \; & \mbox{dense in \ensuremath{\mathscr{H}}; and}\\
dom\left(S\right) & =span\left\{ \delta_{x}\::\:x\in V\right\}  & \; & \mbox{dense in \ensuremath{l^{2}\left(V\right)}.}
\end{alignat*}
By \propref{dh5}, we have
\begin{equation}
\left\langle Tf,\varphi\right\rangle _{l^{2}}=\left\langle f,S\varphi\right\rangle _{\mathscr{H}},\label{eq:aa3}
\end{equation}
for all $f\in dom\left(T\right)\subset\mathscr{H}$, and all $\varphi\in dom\left(S\right)\subset l^{2}$. 

The conclusions $T\subset S^{*}$, and $S\subset T^{*}$, follow from
(\ref{eq:aa3}). Since each operator $S$ and $T$ has dense domain,
it follows that both $T^{*}$ and $S^{*}$ must have dense domains
in the respective Hilbert spaces, i.e., $dom\left(T^{*}\right)$ dense
in $l^{2}$, and $dom\left(S^{*}\right)$ dense in $\mathscr{H}$. \end{proof}
\begin{rem}
As a result, we conclude that $T^{*}\overline{T}$ is a selfadjoint
extension of the operator $\Delta$ from \propref{dh5}.\end{rem}
\begin{thm}
\label{thm:del}Given $V$, and a positive definite (p.d.) function
$k:V\times V\rightarrow\mathbb{R}$, let $\mathscr{H}\left(=\mathscr{H}\left(k\right)\right)$
be the corresponding RKHS. Fix $x_{1}\in V$; then the following three
conditions are equivalent:
\begin{enumerate}
\item \label{enu:del1}$\delta_{x_{1}}\in\mathscr{H}$; 
\item \label{enu:del2}$\exists\,0\leq C_{x_{1}}<\infty$, such that 
\begin{equation}
\left|\xi\left(x_{1}\right)\right|^{2}\leq C_{x_{1}}\underset{F\times F}{\sum\sum}\overline{\xi\left(x\right)}\xi\left(y\right)k\left(x,y\right)\label{eq:d1}
\end{equation}
holds, for all $F\in\mathscr{F}\left(V\right)$, and all functions
$\xi$ on $F$. 
\item \label{enu:del3}For $F\in\mathscr{F}\left(V\right)$, set 
\begin{equation}
K_{F}:=\left(k\left(x,y\right)\right)_{\left(x,y\right)\in F\times F},\label{eq:d2}
\end{equation}
as a $\#F\times\#F$ matrix. Then
\begin{equation}
\sup_{F\in\mathscr{F}\left(V\right)}\left(K_{F}^{-1}\delta_{x_{1}}\right)\left(x_{1}\right)<\infty.\label{eq:d3}
\end{equation}

\end{enumerate}
\end{thm}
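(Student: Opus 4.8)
The plan is to prove the cycle of implications $(\ref{enu:del1})\Rightarrow(\ref{enu:del2})\Rightarrow(\ref{enu:del3})\Rightarrow(\ref{enu:del1})$, using \lemref{s2-4} as the bridge between membership in $\mathscr{H}$ and the uniform estimate, and elementary finite-dimensional linear algebra (Gram matrices and Schur complements) for the equivalence with the matrix condition (\ref{eq:d3}).

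\textbf{Step 1: $(\ref{enu:del1})\Rightarrow(\ref{enu:del2})$.} Assume $\delta_{x_{1}}\in\mathscr{H}$. By the reproducing property (\ref{eq:pd31}), for any $F\in\mathscr{F}(V)$ and any $\xi$ supported on $F$ we have $\xi(x_{1})=\left\langle k_{x_{1}},\sum_{x\in F}\xi(x)k_{x}\right\rangle_{\mathscr{H}}$ — wait, more directly: apply \lemref{s2-4} to the function $\xi=\delta_{x_{1}}\in\mathscr{H}$. Then there is $C_{x_{1}}<\infty$ with $\left|\sum_{x\in F}c_{x}\delta_{x_{1}}(x)\right|^{2}\le C_{x_{1}}\sum\sum_{F\times F}\overline{c_{x}}c_{y}k(x,y)$ for all finite $F$ and all coefficients $(c_{x})$. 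The left side is $|c_{x_{1}}|^{2}$ when $x_{1}\in F$ (and the inequality is vacuous otherwise), and renaming $c_{x}=\xi(x)$ gives exactly (\ref{eq:d1}). Conversely this also shows $(\ref{enu:del2})\Rightarrow(\ref{enu:del1})$ immediately via \lemref{s2-4}, so in fact $(\ref{enu:del1})\Leftrightarrow(\ref{enu:del2})$ is essentially a restatement of the lemma; I will present it that way to keep the argument clean.

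\textbf{Step 2: reformulating (\ref{enu:del2}) as a matrix/Rayleigh-quotient bound, and identifying the optimal constant.} Fix $F$ with $x_{1}\in F$ and put $n=\#F$, $K_{F}$ as in (\ref{eq:d2}). First treat the generic case where $K_{F}$ is invertible (positive definite). The inequality (\ref{eq:d1}) restricted to this $F$ reads $|\langle e_{x_{1}},\xi\rangle|^{2}\le C_{x_{1}}\,\langle\xi,K_{F}\xi\rangle$ for all $\xi\in\mathbb{C}^{n}$, where $e_{x_{1}}$ is the standard basis vector. Substituting $\eta=K_{F}^{1/2}\xi$ turns this into $|\langle K_{F}^{-1/2}e_{x_{1}},\eta\rangle|^{2}\le C_{x_{1}}\|\eta\|^{2}$, so the best constant for this $F$ is $\|K_{F}^{-1/2}e_{x_{1}}\|^{2}=\langle e_{x_{1}},K_{F}^{-1}e_{x_{1}}\rangle=(K_{F}^{-1}\delta_{x_{1}})(x_{1})$. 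Hence (\ref{enu:del2}) holds (with some finite $C_{x_{1}}$) if and only if $\sup_{F}(K_{F}^{-1}\delta_{x_{1}})(x_{1})<\infty$, which is (\ref{enu:del3}); and the optimal $C_{x_{1}}$ equals that supremum, which also equals $\|\delta_{x_{1}}\|_{\mathscr{H}}^{2}$ by the standard fact that $\|P_{F}\delta_{x_{1}}\|^{2}=(K_{F}^{-1}\delta_{x_{1}})(x_{1})$ increases to $\|\delta_{x_{1}}\|_{\mathscr{H}}^{2}$ as $F\uparrow V$. I would state this monotonicity (it follows because $F\subset F'$ implies $\mathscr{H}_{F}\subset\mathscr{H}_{F'}$, so $\|P_{F}\delta_{x_{1}}\|\le\|P_{F'}\delta_{x_{1}}\|$) as a short lemma or inline remark, since it makes the whole equivalence transparent and gives the bonus identity $C_{x_{1}}=\|\delta_{x_{1}}\|_{\mathscr{H}}^{2}$.

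\textbf{Step 3: the degenerate case and bookkeeping.} The one genuine wrinkle is that $K_{F}$ need not be invertible — the RKHS construction in \defref{d1} explicitly quotients out null vectors — so the symbol $K_{F}^{-1}$ in (\ref{eq:d3}) must be read on the range of $K_{F}$ (Moore--Penrose pseudoinverse), and one must check the quantity $(K_{F}^{+}\delta_{x_{1}})(x_{1})$ is still the right one. Here the cleanest route is: if $e_{x_{1}}\notin\mathrm{range}(K_{F})$ then there is $\xi\in\ker K_{F}$ with $\xi(x_{1})\ne0$, so (\ref{eq:d1}) fails for that $F$ with any finite constant, forcing $\delta_{x_{1}}\notin\mathscr{H}$ — consistent with declaring the sup in (\ref{eq:d3}) to be $+\infty$. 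If $e_{x_{1}}\in\mathrm{range}(K_{F})$, run the Step 2 computation on the orthogonal complement of $\ker K_{F}$, where $K_{F}$ is positive definite, and get best constant $\langle e_{x_{1}},K_{F}^{+}e_{x_{1}}\rangle=(K_{F}^{+}\delta_{x_{1}})(x_{1})$. I expect this degenerate-case bookkeeping — pinning down exactly how (\ref{eq:d3}) should be interpreted and showing the interpretation is forced — to be the main obstacle, in the sense that it is the only place the argument is not completely mechanical; everything else is \lemref{s2-4} plus a one-line Rayleigh-quotient optimization. I would therefore either add a sentence to the theorem statement clarifying that $K_F^{-1}$ means the pseudoinverse (equivalently, that $(K_F^{-1}\delta_{x_1})(x_1)$ is taken to be $+\infty$ when $\delta_{x_1}\notin\mathrm{range}\,K_F$), or remark that under a standing assumption that $k$ is strictly positive definite (all $K_F$ invertible) the issue disappears, and prove the clean case in full.
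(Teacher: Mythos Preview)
Your proposal is correct and follows essentially the same route as the paper: both identify $(K_{F}^{-1}\delta_{x_{1}})(x_{1})$ with $\|P_{F}\delta_{x_{1}}\|_{\mathscr{H}}^{2}$, use the monotonicity $F\subset F'\Rightarrow\|P_{F}\delta_{x_{1}}\|\le\|P_{F'}\delta_{x_{1}}\|$, and invoke \lemref{s2-4} for the bridge to membership in $\mathscr{H}$. The differences are stylistic: the paper proves (\ref{enu:del1})$\Rightarrow$(\ref{enu:del2}) by a direct Cauchy--Schwarz computation (rather than citing \lemref{s2-4} for both directions at once), and for (\ref{enu:del2})$\Rightarrow$(\ref{enu:del3}) it builds $P_{F}\delta_{x_{1}}$ explicitly as $h_{F}=\sum_{y\in F}\zeta^{(F)}(y)k_{y}$ with $K_{F}\zeta^{(F)}=\delta_{x_{1}}|_{F}$, whereas your Rayleigh-quotient substitution $\eta=K_{F}^{1/2}\xi$ reaches the same identity in one line. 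Your treatment of the degenerate case (non-invertible $K_{F}$, pseudoinverse interpretation) is more explicit than the paper's, which handles it only implicitly via the observation $\ker K_{F}\subset\{\delta_{x_{1}}\}^{\perp}$; this is a genuine improvement in clarity and worth keeping.
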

\begin{proof}
(\ref{enu:del1})\textbf{$\Longrightarrow$}(\ref{enu:del2}) We use
\lemref{s2-4}. Let $\xi$ be any function on $F\in\mathscr{F}\left(V\right)$,
and set 
\[
h_{\xi}:=\sum_{y\in F}\xi\left(y\right)k_{y}\left(\cdot\right)\in\mathscr{H}_{F};
\]
where $k_{y}\left(\cdot\right):=k\left(\cdot,y\right)$, as in (\ref{eq:pd2}).

Since $\delta_{x_{1}}\in\mathscr{H}$, we have 
\begin{eqnarray}
\left\langle \delta_{x_{1}},h_{\xi}\right\rangle _{\mathscr{H}} & = & \sum_{y\in F}\xi\left(y\right)\left\langle \delta_{x_{1}},k_{y}\right\rangle _{\mathscr{H}}\nonumber \\
 & \overset{\text{by \ensuremath{\left(\ref{eq:pd31}\right)}}}{=} & \sum_{y\in F}\xi\left(y\right)\delta_{x_{1}}\left(y\right)\nonumber \\
 & \overset{\text{by \text{\ensuremath{\left(\ref{eq:del0}\right)}}}}{=} & \xi\left(x_{1}\right).\label{eq:d30}
\end{eqnarray}
Moreover, the Cauchy-Schwarz inequality implies that 
\begin{eqnarray}
\left|\left\langle \delta_{x_{1}},h_{\xi}\right\rangle _{\mathscr{H}}\right|^{2} & \leq & \left\Vert \delta_{x_{1}}\right\Vert _{\mathscr{H}}^{2}\left\Vert h_{\xi}\right\Vert _{\mathscr{H}}^{2}\nonumber \\
 & = & \left\Vert \delta_{x_{1}}\right\Vert _{\mathscr{H}}^{2}\left\langle \sum_{x\in F}\xi\left(x\right)k_{x},\sum_{y\in F}\xi\left(y\right)k_{y}\right\rangle _{\mathscr{H}}\nonumber \\
 & \overset{\text{\mbox{by \ensuremath{\left(\ref{eq:pd3}\right)}}}}{=} & \left\Vert \delta_{x_{1}}\right\Vert _{\mathscr{H}}^{2}\underset{F\times F}{\sum\sum}\overline{\xi\left(x\right)}\xi\left(y\right)k\left(x,y\right).\label{eq:d31}
\end{eqnarray}
Therefore (\ref{eq:d1}) follows from (\ref{eq:d30}) and (\ref{eq:d31}),
with $C_{x_{1}}:=\left\Vert \delta_{x_{1}}\right\Vert _{\mathscr{H}}^{2}$.

(\ref{enu:del2})\textbf{$\Longrightarrow$}(\ref{enu:del3}) Recall
the matrix 
\[
K_{F}:=\left(\left\langle k_{x},k_{y}\right\rangle \right)_{\left(x,y\right)\in F\times F}
\]
as a linear operator $l^{2}\left(F\right)\rightarrow l^{2}\left(F\right)$,
where 
\begin{equation}
\left(K_{F}\varphi\right)\left(x\right)=\sum_{y\in F}K_{F}\left(x,y\right)\varphi\left(y\right),\;\varphi\in l^{2}\left(F\right).
\end{equation}
By (\ref{eq:d1}), we have 
\begin{equation}
\ker\left(K_{F}\right)\subset\left\{ \varphi\in l^{2}\left(F\right):\varphi\left(x_{1}\right)=0\right\} .
\end{equation}
Equivalently, 
\begin{equation}
\ker\left(K_{F}\right)\subset\left\{ \delta_{x_{1}}\right\} ^{\perp}
\end{equation}
and so $\delta_{x_{1}}\big|_{F}\in\ker\left(K_{F}\right)^{\perp}=\mbox{ran}\left(K_{F}\right)$,
and $\exists$ $\zeta^{\left(F\right)}\in l^{2}\left(F\right)$ s.t.
\begin{equation}
\delta_{x_{1}}\big|_{F}=\underset{=:h_{F}}{\underbrace{\sum\nolimits _{y\in F}\zeta^{\left(F\right)}\left(y\right)k\left(\cdot,y\right)}}.\label{eq:t0}
\end{equation}

\begin{claim*}
$P_{F}\left(\delta_{x_{1}}\right)=h_{F}$, where $P_{F}=$ projection
onto $\mathscr{H}_{F}$.\end{claim*}
\begin{proof}[Proof of the claim]
We only need to prove that $\delta_{x_{1}}-h_{F}\in\mathscr{H}\ominus\mathscr{H}_{F}$,
i.e., 
\begin{equation}
\left\langle \delta_{x_{1}}-h_{F},k_{z}\right\rangle _{\mathscr{H}}=0,\;\forall z\in F.\label{eq:t1}
\end{equation}
But, by (\ref{eq:t0}), 
\[
\text{LHS}_{\left(\ref{eq:t1}\right)}=\delta_{x_{1},z}-\sum_{y\in F}k\left(z,y\right)\zeta^{\left(F\right)}\left(y\right)=0.
\]

\end{proof}
Monotonicity: If $F\subset F'$, $F,F'\in\mathscr{F}\left(V\right)$,
then $\mathscr{H}_{F}\subset\mathscr{H}_{F'}$, and $P_{F}P_{F'}=P_{F}$
by easy facts for projections. Hence 
\begin{equation}
\left\Vert P_{F}\delta_{x_{1}}\right\Vert _{\mathscr{H}}^{2}\leq\left\Vert P_{F'}\delta_{x_{1}}\right\Vert _{\mathscr{H}}^{2},\quad h_{F}:=P_{F}\left(\delta_{x_{1}}\right)\label{eq:a0}
\end{equation}
and 
\begin{equation}
\lim_{F\nearrow V}\left\Vert \delta_{x_{1}}-h_{F}\right\Vert _{\mathscr{H}}=0.\label{eq:a1}
\end{equation}

Details: Since $\left\{ k_{x}\right\} _{x\in V}$ spans a dense subspace
in $\mathscr{H}$, by definition of $\mathscr{H}$, as a RKHS, we
conclude that 
\[
I_{\mathscr{H}}=\sup\left\{ P_{F}\:\big|\:F\in\mathscr{F}\left(V\right)\right\} 
\]
where $I_{\mathscr{H}}$ denotes the identity operator in $\mathscr{H}$.
We also use that the system of projections $\left\{ P_{F}\:|\:F\in\mathscr{F}\left(V\right)\right\} $
is a monotone filter in the following sense:

If $F,G\in\mathscr{F}\left(V\right)$, satisfying $F\subset G$, then
since $span\left\{ k_{x}\:|\:x\in F\right\} \subset span\left\{ k_{y}\:|\:y\in G\right\} $,
we get $P_{F}\mathscr{H}\subset P_{G}\mathscr{H}$, or equivalently
$P_{F}\leq P_{G}$, which is the same as $P_{F}=P_{F}P_{G}=P_{G}P_{F}$.
Hence 
\[
\left\Vert P_{G}h\right\Vert _{\mathscr{H}}^{2}=\left\Vert P_{F}h\right\Vert _{\mathscr{H}}^{2}+\left\Vert P_{L}h\right\Vert _{\mathscr{H}}^{2}\geq\left\Vert P_{F}h\right\Vert _{\mathscr{H}}^{2}
\]
holds for all $h\in\mathscr{H}$. The desired conclusion (\ref{eq:a1})
follows. 

Also note that 
\begin{align*}
\left\Vert P_{F}(\delta_{x_{1}})\right\Vert _{\mathscr{H}}^{2} & =\left\langle \delta_{x_{1}},P_{F}(\delta_{x_{1}})\right\rangle _{\mathscr{H}}\\
 & =K_{F}^{-1}\left(x_{1},x_{1}\right)=\left(K_{F}^{-1}\delta_{x_{1}}\right)\left(x_{1}\right)<\infty,
\end{align*}
which is (\ref{enu:del3}).

(\ref{enu:del3})\textbf{$\Longrightarrow$}(\ref{enu:del1}) The
argument from above shows that, for all $x\in V$, and all $\epsilon>0$,
there exists $F_{0}\in\mathscr{F}\left(V\right)$ such that, for all
$F,F'\in\mathscr{F}\left(V\right)$, $F\supseteq F_{0}$, $F'\supseteq F_{0}$,
we have 
\[
\left\Vert h_{F}-h_{F'}\right\Vert _{\mathscr{H}}<\epsilon,
\]
where (see (\ref{eq:t0})) 
\begin{align*}
\delta_{x_{1}}\big|_{F} & =K_{F}\zeta^{\left(F\right)}=h_{F}\big|_{F},\;\mbox{and}\\
\delta_{x_{1}}\big|_{F'} & =K_{F'}\zeta^{(F')}=h_{F'}\big|_{F'}.
\end{align*}

Passing to the limit in the filter (of finite subsets) $\mathscr{F}\left(V\right)$,
we now conclude that the limit $\delta_{x_{1}}\in\mathscr{H}$, which
is the desired conclusion (\ref{enu:del1}). For more details, we
refer to \cite{JT15}.
\end{proof}
If the condition in \propref{dh5} is satisfied we get an associated
operator $\Delta$ as specified in (\ref{eq:dh1}). But without additional
restrictions on $\mathscr{H}=\mathscr{H}\left(k,V\right)$ it is \emph{not}
automatic that $\Delta$ maps \emph{into} $\mathscr{H}$. 
\begin{thm}
\label{thm:s2-8}Let $k:V\times V\rightarrow\mathbb{C}$ be a positive
definite kernel, and let $\mathscr{H}=\mathscr{H}\left(k,V\right)$
be the corresponding RKHS. Assume that $\delta_{x}\in\mathscr{H}$
for all $x\in V$, so $\Delta$ (as in (\ref{eq:dh1})) is well defined. 

Then $\Delta k_{x}=\delta_{x}\left(\in\mathscr{H}\right)$ for all
$x\in V$, and so $\Delta$ is a densely defined Hermitian symmetric
operator in $\mathscr{H}$. \end{thm}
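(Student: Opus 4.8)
The plan is to read the whole statement off the reproducing identity (\ref{eq:pd31}), used with $\delta_{y}$ placed in the \emph{second} slot of the inner product, and then to collect the two structural consequences (dense domain, symmetry) as formalities.

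First I would fix $x,y\in V$ and compute $\left(\Delta k_{x}\right)\left(y\right)$ straight from the defining relation (\ref{eq:dh1}): $\left(\Delta k_{x}\right)\left(y\right)=\left\langle \delta_{y},k_{x}\right\rangle _{\mathscr{H}}$. Since $\delta_{y}\in\mathscr{H}$ by hypothesis, the reproducing property (\ref{eq:pd31}) with $\varphi=\delta_{y}$ gives $\left\langle k_{x},\delta_{y}\right\rangle _{\mathscr{H}}=\delta_{y}\left(x\right)$; conjugating (the inner product is conjugate-linear in the first variable) and using that $\delta_{y}\left(x\right)\in\left\{ 0,1\right\} $ is real, this yields $\left\langle \delta_{y},k_{x}\right\rangle _{\mathscr{H}}=\delta_{y}\left(x\right)=\delta_{x}\left(y\right)$. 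Hence $\left(\Delta k_{x}\right)\left(y\right)=\delta_{x}\left(y\right)$ for every $y\in V$, i.e. $\Delta k_{x}=\delta_{x}$ as functions on $V$; and $\delta_{x}\in\mathscr{H}$ by assumption, so in particular $k_{x}\in dom\left(\Delta\right)$ and $\Delta k_{x}\in\mathscr{H}$.

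Next, since $span\left\{ k_{x}:x\in V\right\} $ is dense in $\mathscr{H}$ by construction (Definition \ref{def:d1}) and is now seen to lie in $dom\left(\Delta\right)$ (Proposition \ref{prop:dh5}), the operator $\Delta$ is densely defined. For the symmetry claim I would verify $\left\langle \Delta k_{x},k_{y}\right\rangle _{\mathscr{H}}=\left\langle k_{x},\Delta k_{y}\right\rangle _{\mathscr{H}}$ for all $x,y\in V$ and then extend by sesquilinearity to all finite linear combinations, i.e. to the whole dense domain. Using $\Delta k_{x}=\delta_{x}$, $\Delta k_{y}=\delta_{y}$ together with (\ref{eq:pd31}), the left-hand side equals $\left\langle \delta_{x},k_{y}\right\rangle _{\mathscr{H}}=\delta_{x}\left(y\right)$ and the right-hand side equals $\left\langle k_{x},\delta_{y}\right\rangle _{\mathscr{H}}=\delta_{y}\left(x\right)$, and these coincide because $\delta_{x}\left(y\right)=\delta_{y}\left(x\right)$.

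I do not expect a genuine obstacle: the entire theorem is contained in the single identity $\left\langle \delta_{y},k_{x}\right\rangle _{\mathscr{H}}=\delta_{x}\left(y\right)$, which is just (\ref{eq:pd31}) read in the second argument. The only points requiring a word of care are the bookkeeping with the physicists' (conjugate-linear-in-the-first-variable) convention when $\delta_{y}$ is moved between slots — harmless since $\delta_{x}\left(y\right)$ is $0$ or $1$ — and the fact that ``Hermitian symmetric'' is meant for an unbounded densely defined operator, so symmetry is asserted only on the dense domain $span\left\{ k_{x}\right\} $; this is consistent with the preceding remark that $T^{*}\overline{T}$ furnishes a self-adjoint \emph{extension} of $\Delta$, which need itself be neither bounded nor self-adjoint on its natural domain.
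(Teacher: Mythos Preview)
Your proof is correct, and the decisive computation---$\left(\Delta k_{x}\right)\left(y\right)=\left\langle \delta_{y},k_{x}\right\rangle _{\mathscr{H}}=\delta_{y}\left(x\right)=\delta_{x}\left(y\right)$---is exactly the paper's Step~2. Where you differ is in the surrounding scaffolding: the paper frames the problem as ``show $\xi:=\Delta k_{x_{0}}$ lies in $\mathscr{H}$,'' invokes \lemref{s2-4} and the equivalence (\ref{enu:del1})$\Leftrightarrow$(\ref{enu:del2}) of \thmref{del}, and even introduces an $\varepsilon$-perturbed kernel $k^{(x_{0})}$, before arriving at the identification $\xi=\delta_{x_{0}}$. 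You instead observe at once that the pointwise identity $\Delta k_{x}=\delta_{x}$ \emph{together with} the standing hypothesis $\delta_{x}\in\mathscr{H}$ already settles membership in $\mathscr{H}$, so the appeal to \thmref{del} is redundant here. Your route is shorter and loses nothing; the paper's detour through the a~priori estimate (\ref{eq:kd2}) serves more to illustrate the machinery of \thmref{del} than to supply a logically necessary step. Your treatment of density and symmetry on $\mathrm{span}\{k_{x}\}$ is also what the paper records in the remark immediately following the theorem.
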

\begin{proof}
Let $x_{0}\in V$, then $\Delta k_{x_{0}}\in\mathscr{H}$ holds if
and only if there is an $\epsilon>0$ such that the following $x_{0}$-modified
kernel $k^{\left(x_{0}\right)}$ is positive definite, where 
\begin{equation}
k^{\left(x_{0}\right)}\left(x,y\right)=\begin{cases}
k\left(x,y\right) & \mbox{if }\left(x,y\right)\neq\left(x_{0},y_{0}\right)\\
k\left(x_{0},y_{0}\right)-\epsilon & \mbox{if }\left(x,y\right)=\left(x_{0},y_{0}\right).
\end{cases}\label{eq:kd1}
\end{equation}

We shall now show that this holds; it is an application of \thmref{del}.

Let $k,V,\mathscr{H}$ and $x_{0}\in V$ be as in the statement of
the theorem. Now consider the function $\xi=\Delta k_{x_{0}}$. 

\textbf{Step 1.} We use \lemref{s2-4}: To show that $\xi$ is in
$\mathscr{H}$, we shall verify a variant of the estimate (\ref{eq:d1})
from \thmref{del}. Consider all finite sums as follows, and the stated
\emph{a priori} estimate:
\begin{equation}
\Big|\sum_{y\in V}\lambda_{y}\xi\left(y\right)\Big|^{2}\leq C\underset{y,z}{\sum\sum}\overline{\lambda}_{y}\lambda_{z}k\left(y,z\right),\label{eq:kd2}
\end{equation}
$\forall\lambda_{y}\in\mathbb{C}$, finite support $y\in V$, where
$C<\infty$ depends only on $x_{0}$. We shall take $\epsilon=C^{-1}$
in (\ref{eq:kd1}).

\textbf{Step 2.} We have 
\begin{equation}
\xi\left(y\right)=\delta_{y,x_{0}},\quad y\in V.\label{eq:kd3}
\end{equation}
Verification of (\ref{eq:kd3}): We have
\[
\xi\left(y\right)=\left(\Delta k_{x_{0}}\right)\left(y\right)\underset{\text{by \ensuremath{\left(\ref{eq:dh1}\right)}}}{=}\left\langle \delta_{y},k_{x_{0}}\right\rangle =\delta_{x_{0},y}
\]
as claimed in (\ref{eq:kd3}). We used that $k_{x_{0}}$ has the reproducing
property, and that $\delta_{y}\in\mathscr{H}$ for all $y\in V$.

Hence we may apply \thmref{del}, (\ref{enu:del2})$\Longleftrightarrow$(\ref{enu:del1}),
to conclude that $\xi=\Delta k_{x_{0}}\in\mathscr{H}$. But, by Step
2, we also have $\xi=\delta_{x_{0}}\left(\in\mathscr{H}\right)$,
and so $\Delta_{k_{x_{0}}}=\delta_{x_{0}}$; which is the desired
conclusion.
\end{proof}

\section{Infinite networks}

In the section above we studied the discreteness property in the general
setting of reproducing kernel Hilbert spaces (RKHS). Below we turn
to our main application: Those RKHSs which arise in the study of infinite
network models; see e.g., \cite{CJ11,JP11,JT15}. By this we mean
infinite graphs $G=\left(V,E\right)$, with specified sets of vertices
$V$, and edges $E$ (see \defref{st0}). While such network models
$G$ have been previously studied in the literature, see e.g., \cite{JP10},
our present setting is more general in a number of respects; especially
in that our present setting, vertex points may have an infinite number
of neighbors, i.e., there may be points $x$ in $V$ with an infinite
number of edges $\left(xy\right)$.
\begin{rem}
Our present paper has 3 different settings of generality:
\begin{enumerate}
\item \label{enu:s2-1}The RHKSs in general;
\item \label{enu:s2-2}The special RKHSs $\mathscr{H}$ which has the \emph{discrete
mass property} (\defref{dmp}), i.e., containing all the Dirac masses. 
\item \label{enu:s2-3}The RKHSs from infinite network models $\left(V,E,c\right)$,
where $V$ consists of vertices, $E$ is the edge-set in $V\times V\backslash\left\{ \mbox{diagonal}\right\} $,
and $c$ is a prescribed conductance function on $E$. 
\end{enumerate}

(Note that (\ref{enu:s2-3}) is a special case of (\ref{enu:s2-2}),
and (\ref{enu:s2-2}) is a special case of (\ref{enu:s2-1}). See
details below.)\\

In general, if $k:V\times V\rightarrow\mathbb{C}$ is a p.d. function,
we get $\mathscr{H}=\mbox{RKHS}\left(k,V\right)$. 

In \thmref{s2-8}, we showed that \emph{if }$\mathscr{H}$ is special,
having the discrete mass property (\defref{dmp}), i.e., $\delta_{x}\in\mathscr{H}$,
for all $x\in V$; then we may consider the function 
\begin{equation}
\left(\Delta f\right)\left(x\right):=\left\langle \delta_{x},f\right\rangle _{\mathscr{H}}\label{eq:s2-s1}
\end{equation}
as in (\ref{eq:dh1}). But it is not guaranteed that the function
$V\ni x\longrightarrow\left(\Delta f\right)\left(x\right)$ will be
in $\mathscr{H}$. In the proof of \thmref{s2-8}, we showed, using
\lemref{s2-4}, that the operator $\Delta$ in (\ref{eq:s2-s1}) indeed
maps into $\mathscr{H}$. 

Consequently, setting $dom\left(\Delta\right)=span\left\{ k_{x}\::\:x\in V\right\} $,
$\Delta k_{x}=\delta_{x}$, it follows that
\[
\left\langle \Delta k_{x},k_{y}\right\rangle _{\mathscr{H}}=\left\langle k_{x},\Delta k_{y}\right\rangle _{\mathscr{H}}=\delta_{x,y},\quad x,y\in V;
\]
and so $\Delta$ is a densely defined Hermitian symmetric operator
in $\mathscr{H}$. 

Now, specialize to our infinite network models. Let $\left(V,E,c\right)$
be as above; pick a base-point $o\in V$. Set 
\begin{equation}
V':=V\backslash\left\{ o\right\} .
\end{equation}
By Riesz' theorem (see also \cite{JP10,JP11}), there exists $v_{x}:=v_{x,o}$,
such that 
\begin{equation}
f\left(x\right)-f\left(o\right)=\left\langle v_{x},f\right\rangle _{\mathscr{H}}\label{eq:s2-s3}
\end{equation}
valid for all $f\in\mathscr{H}$, and all $x\in V'$. We may further
assume that $v_{x}\left(o\right)=0$, $x\in V'$. The functions $v_{x}$
are called \emph{dipoles}. 

Consider the \emph{energy Hilbert space}, $\mathscr{H}_{E,c}$, with
inner product defined by 
\begin{equation}
\left\langle f,g\right\rangle _{\mathscr{H}_{E,c}}=\frac{1}{2}\underset{\left(s,t\right)\in E}{\sum\sum}c_{st}(\overline{f\left(s\right)}-\overline{f\left(t\right)})\left(g\left(s\right)-g\left(t\right)\right).\label{eq:s2-s4}
\end{equation}
Set 
\begin{align}
k^{\left(c\right)}\left(x,y\right) & :=\left\langle v_{x},v_{y}\right\rangle _{\mathscr{H}_{E}}=\frac{1}{2}\underset{\left(s,t\right)\in E}{\sum\sum}c_{st}\left(v_{x}\left(s\right)-v_{x}\left(t\right)\right)\left(v_{y}\left(s\right)-v_{y}\left(t\right)\right)\nonumber \\
 & =v_{x}\left(y\right),\quad\forall x,y\in V';\label{eq:s2-s5}
\end{align}
where we have used the property from (\ref{eq:s2-s3}).\end{rem}
\begin{defn}
\label{def:st0}Let $V$ be a set, $\#V=\aleph_{0}$, and let $E\subset V\times V\backslash\left\{ \mbox{diagonal}\right\} $.
Let $c:E\rightarrow[0,\infty)$ be a fixed function. We assume that
for any pair $x,y\in V$ $\exists$ $\left\{ x_{i}\right\} _{i=0}^{n}\subset V$
s.t. $\left(x_{i},x_{i+1}\right)\in E$, $x_{0}=x$ and $x_{n}=y$. 

For functions on $V$, we introduce the following equivalence relation
\[
f_{1}\sim f_{2}\underset{\text{Def}}{\Longleftrightarrow}f_{1}-f_{2}\;\mbox{is constant on \ensuremath{V}.}
\]
On the set of equivalence classes, we define the $c$-energy 
\[
\left\Vert f\right\Vert _{\mathscr{H}_{E,c}}^{2}=\frac{1}{2}\underset{\left(x,y\right)\in E}{\sum\sum}c_{xy}\left|f\left(x\right)-f\left(y\right)\right|^{2}<\infty;
\]
see (\ref{eq:s2-s4})-(\ref{eq:s2-s5}).
\end{defn}
With the corresponding inner product, this becomes a Hilbert space
denoted $\mathscr{H}_{E,c}$. 

Using this and Riesz, we showed that, for all $x,y\in V$, $\exists!$
$v_{xy}\in\mathscr{H}_{E,c}$ s.t. 
\begin{equation}
f\left(x\right)-f\left(y\right)=\left\langle v_{xy},f\right\rangle _{\mathscr{H}_{E,c}},\quad f\in\mathscr{H}_{E,c}.\label{eq:st1}
\end{equation}
Fix a base-point $o\in V$, and set 
\begin{equation}
k^{\left(c\right)}\left(x,y\right)=\left\langle v_{xo},v_{yo}\right\rangle _{\mathscr{H}_{E,c}}.\label{eq:st2}
\end{equation}
Then $k^{\left(c\right)}$ is a positive definite kernel, and we get
a canonical isomorphism
\[
\mbox{RKHS}(k^{\left(c\right)})\simeq\left\{ f\in\mathscr{H}_{E,c}\::\:f\left(o\right)=0\right\} .
\]
We normalize with $v_{x,o}\left(0\right)=0$. 
\begin{cor}
Let $V,E,c$ and $\mathscr{H}_{E,c}$ be as in \defref{st0}, then,
for $x\in V$, $\mbox{class}\left(\delta_{x}\right)\in\mathscr{H}_{E,c}$
holds, if and only if, 
\[
c\left(x\right):=\underset{\substack{y\in V\\
\left(x,y\right)\in E
}
}{\sum}c_{xy}<\infty.
\]
In this case, 
\[
\left\Vert \delta_{x}\right\Vert _{\mathscr{H}_{E,c}}^{2}=c\left(x\right).
\]
\end{cor}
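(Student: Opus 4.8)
The plan is to read off both assertions from a single direct evaluation of the $c$-energy of the function $\delta_{x}$, using only the defining formula in \defref{st0}; this is quicker than routing through \thmref{del}, although one could phrase it in that language as well.

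First I would determine which terms of $\frac{1}{2}\underset{\left(s,t\right)\in E}{\sum\sum}c_{st}\left|\delta_{x}\left(s\right)-\delta_{x}\left(t\right)\right|^{2}$ are nonzero. Since $E$ omits the diagonal, $\delta_{x}\left(s\right)-\delta_{x}\left(t\right)$ vanishes unless exactly one of $s,t$ equals $x$, in which case $\left|\delta_{x}\left(s\right)-\delta_{x}\left(t\right)\right|^{2}=1$. Because the edge set $E$ and the conductance $c$ are symmetric (this is precisely what the prefactor $\frac{1}{2}$ in (\ref{eq:s2-s4}) records: each edge is listed as two ordered pairs), the surviving pairs are $\left(x,y\right)$ and $\left(y,x\right)$ with $y$ ranging over the neighbours of $x$, and each contributes $c_{xy}$. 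Hence
\[
\left\Vert \delta_{x}\right\Vert _{\mathscr{H}_{E,c}}^{2}=\frac{1}{2}\underset{\left(s,t\right)\in E}{\sum\sum}c_{st}\left|\delta_{x}\left(s\right)-\delta_{x}\left(t\right)\right|^{2}=\frac{1}{2}\cdot2\underset{\substack{y\in V\\ \left(x,y\right)\in E}}{\sum}c_{xy}=c\left(x\right),
\]
an equality in $[0,\infty]$, since all summands are nonnegative.

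Then I would conclude as follows. By \defref{st0}, $\mathscr{H}_{E,c}$ consists exactly of the equivalence classes modulo constants of functions on $V$ of \emph{finite} $c$-energy; adding a constant to $\delta_{x}$ alters neither the energy nor membership, so $\mbox{class}\left(\delta_{x}\right)$ is a well-defined candidate, and it lies in $\mathscr{H}_{E,c}$ if and only if the quantity just computed is finite, that is, if and only if $c\left(x\right)=\underset{\substack{y\in V\\ \left(x,y\right)\in E}}{\sum}c_{xy}<\infty$; and in that case $\left\Vert \delta_{x}\right\Vert _{\mathscr{H}_{E,c}}^{2}=c\left(x\right)$.

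The only step I would flag explicitly — everything else being a routine sum of nonnegative numbers — is the bookkeeping of the symmetric edge set: each geometric edge at $x$ is counted twice among the ordered pairs $\left(s,t\right)\in E$, and this is exactly cancelled by the prefactor $\frac{1}{2}$, so the answer is $c\left(x\right)$ and not $2c\left(x\right)$. As an optional complement, the same one-line computation yields $\left\langle \delta_{x},f\right\rangle _{\mathscr{H}_{E,c}}=c\left(x\right)f\left(x\right)-\underset{\substack{y\in V\\ \left(x,y\right)\in E}}{\sum}c_{xy}f\left(y\right)$ for $f\in\mathscr{H}_{E,c}$, identifying the operator $\Delta$ of \propref{dh5} with the graph Laplacian of $\left(V,E,c\right)$ and re-deriving $\left\Vert \delta_{x}\right\Vert _{\mathscr{H}_{E,c}}^{2}=\left(\Delta\delta_{x}\right)\left(x\right)=c\left(x\right)$.
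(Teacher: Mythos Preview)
Your argument is correct. The direct evaluation of $\left\Vert \delta_{x}\right\Vert _{\mathscr{H}_{E,c}}^{2}$ from the defining quadratic form is sound: the edge set omits the diagonal, so the only surviving ordered pairs are $\left(x,y\right)$ and $\left(y,x\right)$ with $y$ a neighbour of $x$, the symmetry of $E$ and of $c$ makes these two families identical, and the prefactor $\tfrac{1}{2}$ collapses them to $c\left(x\right)$. Since \defref{st0} declares $\mathscr{H}_{E,c}$ to consist precisely of the equivalence classes of functions of finite $c$-energy, the membership criterion and the norm formula follow at once. Your optional complement is also correct and recovers \corref{lap1}.

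The paper's own proof is different in spirit: it invokes \thmref{del}, in particular condition~(\ref{eq:d3}), rather than computing the energy directly. That route is legitimate but indirect here --- \thmref{del} is phrased for an abstract RKHS $\mathscr{H}\left(k\right)$, and one must first identify $\mathscr{H}_{E,c}$ with $\mathscr{H}(k^{\left(c\right)})$ via the dipoles (as in \corref{lap4}) before the criterion applies; moreover, extracting the exact value $\left\Vert \delta_{x}\right\Vert ^{2}=c\left(x\right)$ from the supremum in (\ref{eq:d3}) still requires essentially the edge-by-edge computation you carried out. Your approach is the more economical one for this particular corollary, since the energy form is given explicitly and there is no need to pass through the kernel description; the paper's approach has the advantage of situating the result inside the general RKHS framework of \secref{drkhs}.
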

\begin{proof}
The result is immediate from \thmref{del}; see especially (\ref{eq:d3}).
\end{proof}

\section{\label{sec:dRKHS}Discrete RKHSs as restrictions}

Given a discrete set $V$, $\#V=\aleph_{0}$, let $K_{V}:V\times V\rightarrow\mathbb{C}$
(or $\mathbb{R}$) be a positive definite (p.d.) function, and $\mathscr{H}_{V}=\mathscr{H}\left(K_{V}\right)$
the corresponding RKHS. We study when $\delta_{x}$ is in $\mathscr{H}_{V}$,
for all $x\in V$.

We show below (\thmref{main}; also see \secref{net}) that every
fundamental solution for a Dirichlet boundary value problem on a bounded
open domain $\Omega$ in $\mathbb{R}^{\nu}$, allows for discrete
restrictions (i.e., vertices sampled in $\Omega$), which have the
desired ``discrete mass'' property (see \defref{dmp}). 
\begin{rem}
To get the desired conclusions, consider a continuous p.d. function
$K:\Omega\times\Omega\rightarrow\mathbb{R}$, and $\left\{ f_{n}\right\} _{n\in\mathbb{N}}$
an ONB for $\mathscr{H}\left(K\right)$ = RKHS = CM (Cameron Martin
Hilbert space; see \subref{CM}.) We need suitable restricting assumptions
on the prescribed set $\Omega\subset\mathbb{R}^{\nu}$: 
\begin{enumerate}[label=(\roman{enumi})]
\item \label{enu:d1}open
\item bounded
\item connected
\item \label{enu:d4}smooth boundary $\partial\Omega$
\end{enumerate}

(Some of the restrictions on $\Omega$ may be relaxed, but even this
setting is interesting enough.)

The conditions \ref{enu:d1}-\ref{enu:d4} are satisfied by the covariance
function $k\left(s,t\right)=s\wedge t-st$ on $\Omega\times\Omega$,
$\Omega=\left(0,1\right)\subset\mathbb{R}$, for Brownian bridge (Examples
\ref{exa:1} and \ref{exa:bb}). By contrast, we also have the covariance
function $k\left(s,t\right)=s\wedge t$ for the Brownian motion, and
in this case, we may take $\Omega=\mathbb{R}_{+}$, or $\Omega=\mathbb{R}$;
and these are examples with \emph{unbounded} domains $\Omega\subset\mathbb{R}^{\nu}$. 
\end{rem}

\subsection{Examples from elliptic operators}

Given a bounded open domain $\Omega\subset\mathbb{R}^{\nu}$, let
\begin{equation}
\Delta=-\sum_{j=1}^{\nu}\left(\frac{\partial}{\partial x_{j}}\right)^{2}=-\nabla^{2}\label{eq:e1}
\end{equation}
with corresponding Green's function $K$, i.e., the fundamental solution
to the Dirichlet boundary value problem, so that $K:\Omega\times\Omega\rightarrow\mathbb{C}$,
$\Delta K\left(s,\cdot\right)=\delta_{s}$, and $K\left(s,\cdot\right)\big|{}_{\partial\Omega}\equiv0$. 

We assume that $\Omega\subset\mathbb{R}^{\nu}$ has finite Poincaré
constant $C_{P}$, i.e., we have: 
\begin{equation}
\int_{\Omega}\left|f\right|^{2}dx\leq C_{P}\left\{ \left|\int_{\Omega}f\,dx\right|^{2}+\int_{\Omega}\left|\nabla f\right|^{2}dx\right\} ,\quad\forall f\in\mathscr{H}_{CM}\left(\Omega\right);\label{eq:pc}
\end{equation}
see \cite{Ami78,DL54}.
\begin{example}[Brownian bridge]
\label{exa:1}For $\nu=1$, $\Omega=\left(0,1\right)$, then 
\begin{equation}
k\left(s,t\right)=s\wedge t-st;\label{eq:e2}
\end{equation}
is the covariance function of the Brownian bridge.\end{example}
\begin{prop}
Let $\Omega=\left(0,1\right)$, and $k:\Omega\times\Omega\rightarrow\mathbb{R}$
be the covariance function in (\ref{eq:e2}). Set $k_{s}\left(t\right):=k\left(s,t\right)$,
for all $s,t\in\Omega$. Then, the function $k_{s}$ satisfies:
\begin{equation}
\begin{split}k_{s}\left(0\right) & =k_{s}\left(1\right)=0,\quad\mbox{and}\quad-\left(\frac{d}{dt}\right)^{2}k_{s}=\delta\left(s-t\right).\end{split}
\label{eq:e3}
\end{equation}
\end{prop}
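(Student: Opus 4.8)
The plan is to verify directly that the stated kernel $k(s,t) = s\wedge t - st$ on $\Omega = (0,1)$ is the Green's function for $-\left(\frac{d}{dt}\right)^2$ with Dirichlet boundary conditions, which amounts to checking the two assertions in \eqref{eq:e3} separately. For the boundary conditions, I would simply substitute: at $t=0$ we have $s\wedge 0 = 0$ and $s\cdot 0 = 0$, so $k_s(0) = 0$; at $t=1$ we have $s\wedge 1 = s$ (since $s\in(0,1)$) and $s\cdot 1 = s$, so $k_s(1) = s - s = 0$. By symmetry $k(s,t) = k(t,s)$, so $k_s(0) = k_s(1) = 0$ as functions of $s$ as well.

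For the differential equation, the key observation is to split $k_s$ into its two linear pieces. For fixed $s\in(0,1)$, write
\begin{equation*}
k_s(t) = \begin{cases} t(1-s) & 0 \le t \le s\\ s(1-t) & s \le t \le 1,\end{cases}
\end{equation*}
which follows from $s\wedge t - st = t - st = t(1-s)$ when $t\le s$, and $= s - st = s(1-t)$ when $t\ge s$. On each of the two open intervals $(0,s)$ and $(s,1)$ the function $t\mapsto k_s(t)$ is affine, hence $-k_s''(t) = 0$ there in the classical sense. At $t = s$ the function is continuous, and the left and right derivatives are $k_s'(s^-) = 1-s$ and $k_s'(s^+) = -s$, so the jump in the first derivative is $k_s'(s^+) - k_s'(s^-) = -s - (1-s) = -1$. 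A negative unit jump in the first derivative of a continuous piecewise-affine function is precisely the statement that $-k_s'' = \delta(\cdot - s)$ in the distributional sense: for any test function $\varphi\in C_c^\infty(0,1)$, integrating $\int_0^1 k_s(t)(-\varphi''(t))\,dt$ by parts twice over each subinterval and collecting the boundary contributions at $t=s$ yields $\varphi(s)$. I would present this integration-by-parts computation as the substance of the proof.

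I do not expect a genuine obstacle here; the result is a standard one-dimensional Green's function identity and the proof is a short direct verification. The only point requiring a little care is the bookkeeping in the distributional computation — making sure the boundary terms from the two integrations by parts at the interior point $t=s$ combine correctly, and that the endpoint terms at $t=0,1$ vanish (which they do because $\varphi$ has compact support, or alternatively because $k_s$ vanishes there). One could alternatively cite the general theory of Sturm--Liouville Green's functions, but since the paper is being explicit about the Brownian bridge example, the hands-on verification via the derivative jump is the natural route.
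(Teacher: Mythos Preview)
Your proposal is correct and follows essentially the same route as the paper: both write $k_s$ as the piecewise-affine function $t(1-s)$ on $[0,s]$ and $s(1-t)$ on $[s,1]$, then differentiate to get $k_s'(t)=(1-s)\chi_{(0,s)}(t)-s\chi_{(s,1)}(t)$ and observe that the unit negative jump at $t=s$ gives $-k_s''=\delta_s$. The paper simply differentiates the indicator functions distributionally, while you phrase the same step as an integration-by-parts against a test function; the content is identical.
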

\begin{proof}
Direct verification, see \cite{JT15}. Sketch: Fix $s$, $0<s<1$,
then 
\[
k_{s}\left(t\right)=\begin{cases}
t\left(1-s\right) & 0<t\leq s\\
s\left(1-t\right) & s<t<1
\end{cases}
\]
and (see Fig \ref{fig:bbf}) 
\begin{eqnarray*}
\frac{d}{dt}k_{s}\left(t\right) & = & \begin{cases}
1-s & 0<t\leq s\\
-s & s<t<1
\end{cases}\\
 & = & \left(1-s\right)\chi_{\left(0,s\right)}\left(t\right)-s\chi_{\left(s,1\right)}\left(t\right)\\
\left(\frac{d}{dt}\right)^{2}k_{s}\left(t\right) & = & -\left(1-s\right)\delta_{s}-s\delta_{s}=-\delta_{s};
\end{eqnarray*}
or equivalently, $\Delta K=\delta\left(t-s\right)$, where $k$ is
as in (\ref{eq:e2}).\end{proof}
\begin{example}[Discrete version]
Fix $R$ s.t. $0<R<1$, set $V=\mathbb{Z}_{+}$ and conductance $c_{i,i+1}=R^{-i}$.
Then the energy-Hilbert space $\mathscr{H}_{R}$ consists of functions
$f$ on $\mathbb{Z}_{+}$ s.t.
\begin{align}
 & \lim_{i\rightarrow\infty}f\left(i\right)=0,\;\mbox{and}\label{eq:h1}\\
 & \left\Vert f\right\Vert _{\mathscr{H}_{R}}^{2}=\sum_{i=1}^{\infty}\frac{1}{R^{i-1}}\left|f\left(i\right)-f\left(i-1\right)\right|^{2}<\infty,\label{eq:h2}
\end{align}
and graph-Laplacian
\begin{equation}
\left(\Delta_{disc}f\right)\left(i\right)=R^{-i+1}\left(f\left(i\right)-f\left(i-1\right)\right)+R^{-i}\left(f\left(i\right)-f\left(i+1\right)\right).\label{eq:h4}
\end{equation}
In this case, the reproducing kernel $k_{R}$ is as follows:
\begin{equation}
k_{R}\left(i,j\right)=\left\langle v_{i}^{R},v_{j}^{R}\right\rangle _{\mathscr{H}_{R}}=\frac{R^{i\wedge j}}{1-R},\quad\forall\left(i,j\right)\in\mathbb{Z}_{+}\times\mathbb{Z}_{+}.\label{eq:h3}
\end{equation}
Here $\left(v_{i}^{R}\right)_{i\in\mathbb{Z}_{+}}$ is a system of
functions in $\mathscr{H}_{R}$ such that $\left\langle v_{i}^{R},f\right\rangle _{\mathscr{H}_{R}}=f\left(i\right)$,
$\forall f\in\mathscr{H}_{R}$.
\end{example}

\subsubsection*{Higher dimensions}

Let $\Omega\subset\mathbb{R}^{\nu}$, bounded and open s.t. $\partial\Omega$
is smooth. Set 
\begin{equation}
\begin{split} & \Delta_{0}:=-\sum_{j=1}^{\nu}\left(\frac{\partial}{\partial x_{j}}\right)^{2},\;\text{(see \ensuremath{\left(\ref{eq:e1}\right)}) with}\\
 & dom\left(\Delta_{0}\right)=\left\{ f\in L^{2}\left(\Omega\right)\:\big|\:\Delta f\in L^{2}\left(\Omega\right),\;\mbox{and }f\big|_{\partial\Omega}\equiv0\right\} .
\end{split}
\label{eq:e4}
\end{equation}
We have that $\Delta_{0}$ is selfadjoint in $L^{2}\left(\Omega\right)$
and that 
\begin{equation}
\Delta_{0}\geq0\;\text{on \ensuremath{dom(\Delta_{0})}};\label{eq:e5}
\end{equation}
i.e., $\left\langle \varphi,\Delta_{0}\varphi\right\rangle _{L^{2}\left(\Omega\right)}\geq0$,
$\forall\varphi\in dom(\Delta_{0})$; and we therefore get its kernel
$K$ (analogous to (\ref{eq:e3}) in Example \ref{exa:1} (Brownian
bridge).) By (\ref{eq:e4})-(\ref{eq:e5}), 
\[
S_{t}=e^{-t\Delta_{0}}:L^{2}\left(\Omega\right)\rightarrow L^{2}\left(\Omega\right),\quad t>0
\]
is a s.a. contractive semigroup. 

Let 
\begin{equation}
K=\int_{0}^{\infty}e^{-t\Delta_{0}}dt:L^{2}\left(\Omega\right)\rightarrow L^{2}\left(\Omega\right)\label{eq:e6}
\end{equation}
as an operator, generally unbounded. Since $\Delta_{0}$ is elliptic,
we further get that $K$ is represented as 
\begin{equation}
\left(Kf\right)\left(x\right)=\int_{\Omega}K\left(x,y\right)f\left(y\right)dy\label{eq:e7}
\end{equation}
where the integral in (\ref{eq:e7}) is w.r.t. Lebesgue measure in
$\mathbb{R}^{\nu}$. 
\begin{lem}
\label{lem:dRKHS5}Let $K$ be the kernel in (\ref{eq:e6}), then
\begin{equation}
\Delta_{0}K=I\;\text{on \ensuremath{L^{2}\left(\Omega\right)}.}\label{eq:e9}
\end{equation}
Moreover, $K:\Omega\times\Omega$ is continuous, and p.d., i.e., 
\begin{equation}
\sum_{i}\sum_{j}\xi_{i}\xi_{j}K\left(x_{i},x_{j}\right)\geq0\label{eq:e8}
\end{equation}
for all coefficients $\left\{ \xi_{i}\right\} _{i=1}^{n}\subset\mathbb{R}$,
and all $\left\{ x_{i}\right\} _{i=1}^{n}\subset\Omega$. \end{lem}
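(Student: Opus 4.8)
The plan is to establish the three assertions of \lemref{dRKHS5} in the natural order: first the identity $\Delta_{0}K=I$, then continuity of the kernel $K(x,y)$, and finally positive definiteness, with the last piece being essentially a reformulation of the self-adjoint functional calculus.

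For $\Delta_{0}K=I$: since $\Delta_{0}\geq0$ is self-adjoint on $L^{2}(\Omega)$ and $0$ is not in its spectrum (this is where the Poincar\'e inequality (\ref{eq:pc}) is used — it gives a spectral gap, so $\Delta_{0}$ is boundedly invertible), I would invoke the spectral theorem and write $K=\int_{0}^{\infty}e^{-t\Delta_{0}}\,dt=\Delta_{0}^{-1}$ as an identity of (possibly unbounded, but here bounded) self-adjoint operators. Concretely, for $\lambda$ in the spectrum of $\Delta_{0}$ one has $\int_{0}^{\infty}e^{-t\lambda}\,dt=\lambda^{-1}$, and $\lambda\geq c>0$ makes this finite and bounded; then $\Delta_{0}K=\Delta_{0}\Delta_{0}^{-1}=I$. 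The exchange of $\Delta_{0}$ with the integral is justified on the spectral side. This yields (\ref{eq:e9}), and also shows $K$ is a bounded operator (not merely ``generally unbounded'' — the remark before the lemma is conservative), which is what makes the integral kernel representation (\ref{eq:e7}) well behaved.

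For continuity of $K(x,y)$ on $\Omega\times\Omega$: this is elliptic regularity. Since $\Delta_{0}$ is the (minus) Laplacian with Dirichlet conditions and $\partial\Omega$ is smooth, the Green's function $K(x,y)=\Delta_{0}^{-1}(x,y)$ is smooth off the diagonal, and in dimension $\nu$ its only singularity along the diagonal is the Newtonian-potential singularity of order $|x-y|^{2-\nu}$ (or $\log$ for $\nu=2$), which is locally integrable; away from that the parametrix construction gives a continuous (indeed $C^{\infty}$) remainder. Strictly, $K(x,y)$ is continuous on $\Omega\times\Omega$ minus the diagonal for $\nu\geq 2$; for the RKHS application what is actually needed is that $K(x,\cdot)\in L^{2}(\Omega)$ for each $x$, plus continuity of the restricted kernel on the sampled vertex set, so I would state continuity in the form needed and cite standard elliptic theory (e.g. \cite{Nel57}) rather than reprove it.

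For positive definiteness (\ref{eq:e8}): this is the soft part. For any finite set $\{x_{i}\}\subset\Omega$ and scalars $\{\xi_{i}\}$, approximate the point masses by $L^{2}$ bump functions $\varphi_{i}^{\epsilon}$ concentrated near $x_{i}$; then $\sum_{i,j}\xi_{i}\xi_{j}\langle\varphi_{i}^{\epsilon},K\varphi_{j}^{\epsilon}\rangle_{L^{2}}=\langle\sum_i\xi_{i}\varphi_{i}^{\epsilon},\;K(\sum_j\xi_{j}\varphi_{j}^{\epsilon})\rangle\geq0$ because $K=\Delta_{0}^{-1}\geq0$ as a positive self-adjoint operator (its spectrum lies in $(0,\infty)$). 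Letting $\epsilon\to0$ and using continuity of $K(x,y)$ off the diagonal (the diagonal terms $K(x_i,x_i)$ are handled by taking distinct points, or by the fact that $K(x_i,x_i)\geq 0$ and the limit of the smeared quadratic form dominates), one recovers $\sum_{i,j}\xi_{i}\xi_{j}K(x_{i},x_{j})\geq0$. The main obstacle is purely a matter of care with the diagonal singularity of $K$ when $\nu\geq2$: one must either restrict to configurations of distinct points (which suffices for the intended application to discrete restrictions with vertices at distinct sampled locations) or absorb the diagonal contribution using positivity of $K(x_i,x_i)$; everything else is the routine combination of the integral representation (\ref{eq:e7}), positivity of $\Delta_{0}^{-1}$, and a mollifier limit.
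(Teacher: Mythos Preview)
Your approach is essentially the same as the paper's: spectral theorem to get $K=\Delta_{0}^{-1}$ and hence $\Delta_{0}K=I$, then operator positivity $\langle\varphi,K\varphi\rangle_{2}\geq0$ for $\varphi\in C_{c}^{\infty}(\Omega)$ via the spectral representation $\int_{0}^{\infty}\lambda^{-1}\|P_{2}(d\lambda)\varphi\|_{2}^{2}$, followed by an approximation of the Dirac masses by smooth bumps. The paper does not address continuity in its proof, nor the diagonal singularity for $\nu\geq2$; your caveats on those points are well taken and in fact anticipate the paper's own later remark that $K(x,y)$ is singular on the diagonal when $\nu>1$.
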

\begin{proof}
Let $P_{2}\left(d\lambda\right)$ denote the spectral resolution of
$\Delta_{0}$ (projection valued measure (PVM)), i.e., 
\begin{equation}
\begin{split} & P_{2}\left(A\right)=P_{2}\left(A\right)^{*}=P_{2}\left(A\right)^{2}\\
 & P_{2}\left(A\cap B\right)=P_{2}\left(A\right)P_{2}\left(B\right),\quad\forall A,B\in\mathscr{B}\left(\mathbb{R}_{+}\right),
\end{split}
\label{eq:e11}
\end{equation}
so that 
\begin{equation}
\begin{split}\Delta_{0} & =\int_{0}^{\infty}\lambda P_{2}\left(d\lambda\right)\\
e^{-t\Delta_{0}} & =\int_{0}^{\infty}e^{-t\lambda}dP_{2}\left(\lambda\right),\quad t>0
\end{split}
\label{eq:e12}
\end{equation}
and 
\begin{equation}
K=\int_{0}^{\infty}\frac{1}{\lambda}P_{2}\left(d\lambda\right)=\Delta_{0}^{-1}\label{eq:e13}
\end{equation}
(in general an unbounded operator.) 

To see that $K$ is p.d. (see (\ref{eq:e8})):

Step 1. If $\varphi\in C_{c}^{\infty}\left(\Omega\right)$; then (enough
to consider the real valued case)
\begin{eqnarray*}
 &  & \int_{\Omega}\int_{\Omega}K\left(x,y\right)\varphi\left(x\right)\varphi\left(y\right)dxdy\\
 & = & \left\langle \varphi,K\varphi\right\rangle _{2}=\int_{0}^{\infty}\frac{1}{\lambda}\left\Vert P_{2}\left(d\lambda\right)\varphi\right\Vert _{2}^{2}\geq0.
\end{eqnarray*}

Step 2. Approximate $\left(\delta_{x}\right)$ with $C_{c}^{\infty}\left(\Omega\right)$. \end{proof}
\begin{cor}
\label{cor:dRKHS6}Let $\Delta_{0}$, and $\Omega$ be as in \lemref{dRKHS5}.
Then we have a system of kernels $p_{t}\left(x,y\right)$, $t\geq0$,
$\left(x,y\right)\in\Omega\times\Omega$, such that
\begin{enumerate}
\item $p_{0}\left(x,y\right)=\delta_{x,y}$;
\item $p_{t}\left(x,\cdot\right)\in L^{2}\left(\Omega\right)$ for all $t\in\mathbb{R}_{+}$,
$x\in\Omega$; 
\item each $p_{t}\left(\cdot,\cdot\right)$ is a positive definite kernel
on $\Omega\times\Omega$; 
\item \label{enu:dr4}for $s,t\in[0,\infty)$, we have 
\[
p_{s+t}\left(x,y\right)=\int_{\Omega}p_{s}\left(x,z\right)p_{t}\left(z,y\right)\,dz;\;\mbox{and}
\]

\item the kernel $K$ from (\ref{eq:e13}) satisfies 
\[
K\left(x,y\right)=\int_{0}^{\infty}p_{t}\left(x,y\right)dt,\quad\forall\left(x,y\right)\in\Omega\times\Omega.
\]

\end{enumerate}
\end{cor}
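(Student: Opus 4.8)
The plan is to construct the kernels $p_{t}$ directly from the spectral resolution $P_{2}(d\lambda)$ of the selfadjoint operator $\Delta_{0}$ introduced in (\ref{eq:e4})--(\ref{eq:e12}), mimicking the passage from $\Delta_{0}$ to its inverse $K$ that was carried out in \lemref{dRKHS5}. Concretely, for $t>0$ one sets $S_{t}=e^{-t\Delta_{0}}=\int_{0}^{\infty}e^{-t\lambda}P_{2}(d\lambda)$, which is a bounded selfadjoint contraction; since $\Delta_{0}$ is elliptic on the bounded smooth domain $\Omega$, parabolic regularity (or Mercer's theorem, using that $e^{-t\Delta_{0}}$ is trace-class because $\Delta_{0}$ has discrete spectrum with eigenvalues tending to $\infty$) shows $S_{t}$ has a jointly continuous integral kernel $p_{t}(x,y)$ on $\Omega\times\Omega$. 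For $t=0$ we simply declare $p_{0}(x,y)=\delta_{x,y}$, i.e.\ $S_{0}=I$, which is consistent since $S_{t}\to I$ strongly as $t\downarrow0$; this gives item (1). Item (2) is immediate from $p_{t}(x,\cdot)$ being continuous on the bounded domain $\Omega$, hence in $L^{2}(\Omega)$.

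For item (3), positive definiteness of each $p_{t}(\cdot,\cdot)$, I would repeat verbatim the two-step argument from the proof of \lemref{dRKHS5}: first, for $\varphi\in C_{c}^{\infty}(\Omega)$ one has
\[
\int_{\Omega}\int_{\Omega}p_{t}(x,y)\varphi(x)\varphi(y)\,dx\,dy=\langle\varphi,S_{t}\varphi\rangle_{2}=\int_{0}^{\infty}e^{-t\lambda}\,\|P_{2}(d\lambda)\varphi\|_{2}^{2}\geq0,
\]
and then one approximates point masses $\delta_{x_{i}}$ by elements of $C_{c}^{\infty}(\Omega)$ and passes to the limit using joint continuity of $p_{t}$. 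Item (4), the Chapman--Kolmogorov / semigroup relation, follows from the semigroup law $S_{s+t}=S_{s}S_{t}$ on $L^{2}(\Omega)$: writing out $(S_{s}S_{t}\varphi)(x)=\int_{\Omega}\bigl(\int_{\Omega}p_{s}(x,z)p_{t}(z,y)\,dz\bigr)\varphi(y)\,dy$ and comparing with $(S_{s+t}\varphi)(x)=\int_{\Omega}p_{s+t}(x,y)\varphi(y)\,dy$ for all $\varphi$, one identifies the kernels a.e., and continuity upgrades this to every $(x,y)$. Item (5) is the subordination identity: since $\tfrac{1}{\lambda}=\int_{0}^{\infty}e^{-t\lambda}\,dt$ for $\lambda>0$, integrating the spectral representation (\ref{eq:e12}) against $dt$ and using Tonelli gives $K=\int_{0}^{\infty}S_{t}\,dt$ at the level of operators; pairing with test functions and invoking continuity of both sides then yields $K(x,y)=\int_{0}^{\infty}p_{t}(x,y)\,dt$ pointwise, which is exactly (\ref{eq:e13}) refined to kernels.

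The main obstacle is the regularity claim that $S_{t}=e^{-t\Delta_{0}}$ actually possesses a jointly continuous (indeed smooth off the diagonal for $t>0$) integral kernel, rather than merely being a bounded operator; this is where the hypotheses that $\Omega$ is bounded with smooth boundary $\partial\Omega$ and that $\Delta_{0}$ is elliptic with Dirichlet conditions are essential. One route is to invoke standard heat-kernel theory for uniformly elliptic operators on smooth bounded domains (Gaussian upper bounds, hypoellipticity of $\partial_{t}+\Delta_{0}$); a more self-contained route is to use that $\Delta_{0}^{-s}$ is trace-class for $s>\nu/2$, expand $p_{t}(x,y)=\sum_{n}e^{-t\lambda_{n}}f_{n}(x)\overline{f_{n}(y)}$ in the Dirichlet eigenbasis, and show this series converges uniformly on $\Omega\times\Omega$ for each fixed $t>0$ using Weyl-type eigenvalue asymptotics and elliptic bounds on $\|f_{n}\|_{\infty}$. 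The Poincar\'e inequality (\ref{eq:pc}) is what guarantees $\Delta_{0}$ is bounded below away from $0$ (equivalently $0\notin\mathrm{spec}(\Delta_{0})$ on the relevant subspace), which is needed for $\int_{0}^{\infty}S_{t}\,dt$ to converge and thus for item (5). The remaining steps (1)--(4) are then routine consequences of the functional calculus together with the approximation-of-point-masses device already used in \lemref{dRKHS5}.
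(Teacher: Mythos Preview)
Your proposal is correct and follows precisely the approach the paper has in mind: the paper's entire proof reads ``Immediate from the lemma and an application of the Spectral Theorem,'' and what you have written is a careful unpacking of that one-liner, using the spectral representation (\ref{eq:e12}) to define $p_{t}$ as the integral kernel of $S_{t}=e^{-t\Delta_{0}}$, the two-step positivity argument from \lemref{dRKHS5}, the semigroup law for (4), and the identity $\lambda^{-1}=\int_{0}^{\infty}e^{-t\lambda}\,dt$ for (5). Your discussion of the regularity needed to pass from operators to continuous kernels is more than the paper itself supplies.
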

\begin{proof}
Immediate from the lemma and an application of the Spectral Theorem. \end{proof}
\begin{rem}
The conclusions in the corollary are also valid in our discrete models,
$\left(V,E,c\right)$, $E\subset V\times V\backslash\left\{ \mbox{diagonal}\right\} $,
and $c$ defined on $E$ as in \defref{g} below. In this case, 
\begin{equation}
\left(\Delta_{0}f\right)\left(x\right)=\sum_{\substack{y\\
\left(x,y\right)\in E
}
}c_{xy}\left(f\left(x\right)-f\left(y\right)\right),\quad f\in l^{2}\left(V\right);
\end{equation}
\begin{equation}
e^{-t\Delta_{0}}:l^{2}\left(V\right)\longrightarrow l^{2}\left(V\right),\quad t\in\mathbb{R}_{+};
\end{equation}
\begin{equation}
\left(e^{-t\Delta_{0}}f\right)\left(x\right)=\sum_{y\in V}p_{t}\left(x,y\right)f\left(y\right),\quad\forall f\in l^{2}\left(V\right),\:t\in\mathbb{R}_{+};
\end{equation}
and (analogous to (\ref{enu:dr4}) in the corollary), we have:
\begin{equation}
p_{s+t}\left(x,y\right)=\sum_{z\in V}p_{s}\left(x,z\right)p_{t}\left(z,y\right),\quad\forall\left(x,y\right)\in V\times V,\:\forall s,t\in[0,\infty).
\end{equation}
\end{rem}
\begin{cor}
\label{cor:K}Let $K$ be as in (\ref{eq:e6}), then $K$ satisfies
that $\Delta K=\delta\left(x-y\right)$ on $\Omega\times\Omega$,
and $K\left(x,\cdot\right)\big|{}_{\partial\Omega}\equiv0$.\end{cor}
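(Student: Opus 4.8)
The statement is essentially a reformulation, in the language of integral kernels, of the operator identity $\Delta_{0}K=I$ on $L^{2}\left(\Omega\right)$ established in \lemref{dRKHS5} (equation (\ref{eq:e9})), together with the observation that the homogeneous Dirichlet condition is already encoded in $dom\left(\Delta_{0}\right)$, see (\ref{eq:e4}). The plan is first to translate $\Delta_{0}K=I$ into the distributional identity $\Delta_{x}K\left(x,y\right)=\delta\left(x-y\right)$ on $\Omega\times\Omega$, and then to read off the boundary trace of $K\left(x,\cdot\right)$.

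\textbf{Step 1 (the PDE).} For $\varphi\in C_{c}^{\infty}\left(\Omega\right)$ one has $K\varphi\in dom\left(\Delta_{0}\right)$ and $\Delta_{0}\left(K\varphi\right)=\varphi$ by (\ref{eq:e9}), where $K=\Delta_{0}^{-1}$ as in (\ref{eq:e13}). Writing $\left(K\varphi\right)\left(x\right)=\int_{\Omega}K\left(x,y\right)\varphi\left(y\right)dy$ and using that $K$ is smooth off the diagonal — which follows from ellipticity of $\Delta_{0}=-\nabla^{2}$, interior elliptic regularity, and the semigroup representation $K\left(x,y\right)=\int_{0}^{\infty}p_{t}\left(x,y\right)dt$ from \corref{dRKHS6} — one may differentiate under the integral sign away from $\mathrm{supp}\,\varphi$ and pair against test functions to obtain $\int_{\Omega}\left(\Delta_{x}K\left(x,y\right)\right)\varphi\left(y\right)dy=\varphi\left(x\right)$ for every such $\varphi$. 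This is precisely $\Delta_{x}K\left(x,y\right)=\delta\left(x-y\right)$ in the sense of distributions on $\Omega\times\Omega$; since $K\left(x,y\right)=K\left(y,x\right)$, the same identity holds in the $y$-variable, which is the asserted $\Delta K=\delta\left(x-y\right)$.

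\textbf{Step 2 (the boundary condition).} Fix $x_{0}\in\Omega$ and choose $\varphi_{n}\in C_{c}^{\infty}\left(\Omega\right)$ supported in a fixed ball $B$ about $x_{0}$ with $B\subset\subset\Omega$, such that $\varphi_{n}\to\delta_{x_{0}}$ in the distributional sense and $K\varphi_{n}\to K\left(x_{0},\cdot\right)$ in $L^{2}\left(\Omega\right)$. Each $K\varphi_{n}$ lies in $dom\left(\Delta_{0}\right)$, so $\left.K\varphi_{n}\right|_{\partial\Omega}\equiv0$. Because the supports $\mathrm{supp}\,\varphi_{n}$ stay inside $B$, away from $\partial\Omega$, boundary elliptic estimates (here the smoothness of $\partial\Omega$ is used) upgrade the $L^{2}$-convergence to uniform convergence of $K\varphi_{n}$ to $K\left(x_{0},\cdot\right)$ on a neighbourhood of $\partial\Omega$; passing to the limit gives $\left.K\left(x_{0},\cdot\right)\right|_{\partial\Omega}\equiv0$.

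\textbf{Main obstacle.} The formal manipulations above are routine once one controls regularity; the substantive point — and where hypotheses \ref{enu:d1}--\ref{enu:d4} on $\Omega$ enter — is that $K\left(x,\cdot\right)$ is smooth on $\overline{\Omega}\setminus\left\{ x\right\}$, in particular up to $\partial\Omega$, so that the boundary trace is classical and the limiting arguments are legitimate. This rests on interior and boundary elliptic estimates for $-\nabla^{2}$ with Dirichlet data on a smooth bounded domain, applied to the kernel representation in \corref{dRKHS6}.
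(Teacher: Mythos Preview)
Your argument is correct and is essentially the same approach as the paper's: the paper's proof is the single sentence ``By well-known facts from elliptic operators, the conclusion is equivalent to (\ref{eq:e9}),'' and what you have written is precisely an unpacking of that equivalence, reducing everything to $\Delta_{0}K=I$ from \lemref{dRKHS5} together with standard elliptic regularity up to the boundary. Your Steps~1--2 simply make explicit the ``well-known facts'' the paper leaves implicit.
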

\begin{proof}
By well-known facts from elliptic operators, the conclusion is equivalent
to (\ref{eq:e9}).\end{proof}
\begin{lem}
\label{lem:sp}Let $k:\Omega\times\Omega\rightarrow\mathbb{C}$ be
a p.d. kernel, and let $\mathscr{H}=\mathscr{H}\left(k\right)$ be
the corresponding RKHS. Then for every subset $V\subset\Omega$, $\mathscr{H}\big|_{V}=\left\{ f\big|_{V}\:;\:f\in\mathscr{H}\right\} $
is a RKHS $\mathscr{H}_{V}$; and if $\varphi\in\mathscr{H}_{V}$,
then $\left\Vert \varphi\right\Vert _{\mathscr{H}_{V}}=\inf\left\{ \left\Vert f\right\Vert _{\mathscr{H}}\::\:f\in\mathscr{H},\;f\big|_{V}=\varphi\right\} $. \end{lem}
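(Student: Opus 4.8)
The statement is the classical restriction theorem for reproducing kernel Hilbert spaces: if $k$ is p.d. on $\Omega\times\Omega$ with RKHS $\mathscr{H}$, then for any $V\subset\Omega$ the restriction $k\big|_{V\times V}$ is again p.d., its RKHS $\mathscr{H}_V$ equals $\{f\big|_V : f\in\mathscr{H}\}$, and the norm on $\mathscr{H}_V$ is the quotient (infimum) norm. The natural approach is to exhibit the restriction map $r\colon \mathscr{H}\to\mathscr{H}_V$, $r(f)=f\big|_V$, as a coisometry onto $\mathscr{H}_V$ once $\mathscr{H}_V$ is given the infimum-norm, and to identify its kernel and its partial inverse.

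First I would let $\mathscr{H}_V$ denote the RKHS of the restricted kernel $k_V:=k\big|_{V\times V}$ — which is p.d. since (\ref{eq:pd1}) for $k_V$ is just the instance of (\ref{eq:pd1}) for $k$ with $F\subset V$ — built as in Definition \ref{def:d1} from $\{k^V_x := k(\cdot,x)\big|_V : x\in V\}$. The key algebraic observation is that the map $U\colon k_x \mapsto k^V_x$ (for $x\in V$) extends to a well-defined contraction $\mathscr{H}\to\mathscr{H}_V$: on finite linear combinations, $\big\|\sum_{x\in F} c_x k^V_x\big\|^2_{\mathscr{H}_V} = \sum\sum_{F\times F}\overline{c_x}c_y k(x,y) = \big\|\sum_{x\in F}c_x k_x\big\|^2_{\mathscr{H}}$ by (\ref{eq:pd3}) applied in both spaces, so $U$ restricted to $\mathrm{span}\{k_x : x\in V\}$ is isometric; since $\{k^V_x\}$ spans a dense subspace of $\mathscr{H}_V$, $U$ extends to a coisometry (in fact a partial isometry with initial space $\overline{\mathrm{span}}\{k_x:x\in V\}\subseteq\mathscr{H}$ and final space all of $\mathscr{H}_V$). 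Next I would check that $U$ is exactly the restriction map: for $f\in\mathscr{H}$ and $x\in V$, the reproducing property gives $(Uf)(x) = \langle k^V_x, Uf\rangle_{\mathscr{H}_V} = \langle U^* k^V_x, f\rangle_{\mathscr{H}}$; computing $U^* k^V_x$ on the dense set $\{k_y\}$, $\langle U^*k^V_x, k_y\rangle_{\mathscr{H}} = \langle k^V_x, k^V_y\rangle_{\mathscr{H}_V} = k(x,y) = \langle k_x, k_y\rangle_{\mathscr{H}}$, so $U^*k^V_x = k_x$ and hence $(Uf)(x) = \langle k_x, f\rangle_{\mathscr{H}} = f(x)$. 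Thus $Uf = f\big|_V$, which shows simultaneously that $f\big|_V\in\mathscr{H}_V$ for every $f\in\mathscr{H}$ and that $\mathscr{H}\big|_V\subseteq\mathscr{H}_V$.

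For the reverse inclusion and the infimum formula, I would use that $U$ is surjective: given $\varphi\in\mathscr{H}_V$, pick $f=U^*\varphi\in\mathscr{H}$; then $f\big|_V = U U^*\varphi = \varphi$ because $UU^*=I_{\mathscr{H}_V}$ (final space is all of $\mathscr{H}_V$), so $\mathscr{H}_V = \mathscr{H}\big|_V$. For the norm: for any $f\in\mathscr{H}$ with $f\big|_V = \varphi$ we have $\|\varphi\|_{\mathscr{H}_V} = \|Uf\|_{\mathscr{H}_V}\leq\|f\|_{\mathscr{H}}$ since $U$ is a contraction, so $\|\varphi\|_{\mathscr{H}_V}\leq\inf\{\|f\|_{\mathscr{H}} : f\big|_V=\varphi\}$; and equality is attained at $f=U^*\varphi$, since $\|U^*\varphi\|_{\mathscr{H}} = \|\varphi\|_{\mathscr{H}_V}$ as $U^*$ is an isometry on $\mathscr{H}_V$. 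This gives $\|\varphi\|_{\mathscr{H}_V} = \inf\{\|f\|_{\mathscr{H}} : f\in\mathscr{H},\ f\big|_V=\varphi\}$, as claimed. The minimizer $U^*\varphi$ is precisely the component of any preimage in $\overline{\mathrm{span}}\{k_x:x\in V\}$, i.e. the orthogonal projection of any $f$ with $f\big|_V=\varphi$ onto that subspace — a remark worth including, though not needed for the statement.

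The only genuinely delicate point — and the one I would be most careful about — is the passage from finite linear combinations to the completions: one must make sure that an element of $\mathscr{H}$ which is a Cauchy-limit of $\sum c_x k_x$ really does restrict pointwise on $V$ to the corresponding limit in $\mathscr{H}_V$, i.e. that "restriction" commutes with the Hilbert-space limits. This is exactly what the identity $Uf=f\big|_V$ secures, but establishing it cleanly requires the computation of $U^*k^V_x$ above rather than a naive termwise argument; everything else is bookkeeping with partial isometries. No continuity or topological hypothesis on $\Omega$ or on $k$ is actually used — the lemma holds for an arbitrary set $\Omega$ and arbitrary $V\subseteq\Omega$ — so I would state the proof at that level of generality.
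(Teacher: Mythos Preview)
Your argument is correct and complete. The paper does not really give a proof here: it cites Aronszajn \cite{Aro48} and indicates the construction in one line, namely ``consider $\mathscr{H}\ominus\mathscr{N}_V$ where $\mathscr{N}_V=\{f\in\mathscr{H}: f|_V=0\}$; the $\inf$ is attained.'' Your partial-isometry presentation is a fully worked-out version of exactly this idea: the kernel of your restriction map $U$ is $\mathscr{N}_V$, the initial space $\overline{\mathrm{span}}\{k_x:x\in V\}$ is precisely $\mathscr{N}_V^\perp$ (a point you note at the end), and your $U^*$ realizes the isometric identification $\mathscr{H}_V\cong\mathscr{H}\ominus\mathscr{N}_V$. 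So the substance is identical; you have simply supplied the details the paper delegates to the reference.

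One small remark on exposition: when you test $U^*k_x^V$ against ``the dense set $\{k_y\}$,'' you should make explicit that $y$ ranges over $V$ (not all of $\Omega$), and that this suffices because both $U^*k_x^V$ and $k_x$ lie in the initial space $\overline{\mathrm{span}}\{k_y:y\in V\}$. As written the phrase is slightly ambiguous, though the intended argument is clear and correct.
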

\begin{proof}
See \cite{Aro43,Aro48}. Consider $\mathscr{H}\ominus\mathscr{N}_{V}$
(closed), where $\mathscr{N}_{V}=\left\{ f\in\mathscr{H}\:;\:f\big|_{V}=0\right\} $.
Moreover the $\inf$ is attained; $f\in\mathscr{H}$.\end{proof}
\begin{cor}
Let $k$, $V$, and $\mathscr{H}=\mathscr{H}\left(k\right)$ be as
above. For $f\in\mathscr{H}$, write 
\begin{equation}
f=f_{0}+f_{1}\label{eq:n1}
\end{equation}
w.r.t. the orthogonal splitting (two closed subspaces):
\begin{equation}
\mathscr{H}=\mathscr{N}_{V}\oplus\mathscr{N}_{V}^{\perp},\;f_{0}\in\mathscr{N}_{V},\:f_{1}\in\mathscr{N}_{V}^{\perp},\label{eq:n2}
\end{equation}
then if $\varphi\in\mathscr{H}_{V}$, we have 
\begin{equation}
\left\Vert \varphi\right\Vert _{\mathscr{H}_{V}}=\left\Vert f_{1}\right\Vert _{\mathscr{H}},\label{eq:n3}
\end{equation}
where $\varphi=f\big|_{V}$. \end{cor}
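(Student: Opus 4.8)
The plan is to deduce this directly from \lemref{sp}, whose content is precisely that the restriction map $\mathscr{H}\ni f\mapsto f\big|_{V}\in\mathscr{H}_{V}$ realizes $\mathscr{H}_{V}$ as a RKHS with norm given by the infimum $\left\Vert\varphi\right\Vert_{\mathscr{H}_{V}}=\inf\{\left\Vert f\right\Vert_{\mathscr{H}}:f\big|_{V}=\varphi\}$, the infimum being attained. So the whole task is to identify the minimizer with the component $f_{1}\in\mathscr{N}_{V}^{\perp}$.

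First I would fix $\varphi\in\mathscr{H}_{V}$ and choose any $f\in\mathscr{H}$ with $f\big|_{V}=\varphi$; such $f$ exists by the definition of $\mathscr{H}_{V}$. Splitting $f=f_{0}+f_{1}$ as in (\ref{eq:n2}), observe that $f_{0}\in\mathscr{N}_{V}$ means $f_{0}\big|_{V}=0$, hence $f_{1}\big|_{V}=f\big|_{V}=\varphi$. Next I would show $f_{1}$ does not depend on the choice of $f$: if $g\in\mathscr{H}$ is another element with $g\big|_{V}=\varphi$ and $g=g_{0}+g_{1}$ is its splitting, then $(g_{1}-f_{1})\big|_{V}=\varphi-\varphi=0$, so $g_{1}-f_{1}\in\mathscr{N}_{V}$; but $g_{1}-f_{1}\in\mathscr{N}_{V}^{\perp}$ as well, since $\mathscr{N}_{V}^{\perp}$ is a subspace, and therefore $g_{1}-f_{1}=0$. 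Thus $f_{1}$ is the unique element of $\mathscr{N}_{V}^{\perp}$ restricting to $\varphi$.

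Finally I would invoke the Pythagorean identity: for any $g\in\mathscr{H}$ with $g\big|_{V}=\varphi$, writing $g=g_{0}+f_{1}$ with $g_{0}\in\mathscr{N}_{V}$ (by the previous step), we get $\left\Vert g\right\Vert_{\mathscr{H}}^{2}=\left\Vert g_{0}\right\Vert_{\mathscr{H}}^{2}+\left\Vert f_{1}\right\Vert_{\mathscr{H}}^{2}\geq\left\Vert f_{1}\right\Vert_{\mathscr{H}}^{2}$, with equality precisely when $g_{0}=0$, i.e.\ $g=f_{1}$. Hence the infimum in \lemref{sp} equals $\left\Vert f_{1}\right\Vert_{\mathscr{H}}$ and is attained at $f_{1}\in\mathscr{N}_{V}^{\perp}$, which gives $\left\Vert\varphi\right\Vert_{\mathscr{H}_{V}}=\left\Vert f_{1}\right\Vert_{\mathscr{H}}$ as claimed.

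There is no real obstacle here; the only point requiring a moment's care is the independence of $f_{1}$ from the lift $f$, which is what makes the formula well posed, and this is exactly the intersection argument $\mathscr{N}_{V}\cap\mathscr{N}_{V}^{\perp}=\{0\}$. Everything else is the orthogonal-projection characterization of the minimal-norm lift, already packaged in \lemref{sp}.
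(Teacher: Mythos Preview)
Your proof is correct and follows essentially the same route as the paper: both use \lemref{sp} to identify $\left\Vert\varphi\right\Vert_{\mathscr{H}_{V}}$ as the infimum over lifts, observe that $f_{1}\big|_{V}=\varphi$ since $f_{0}\in\mathscr{N}_{V}$, and then apply the Pythagorean identity to conclude. Your version is in fact a bit more careful, since you explicitly verify that $f_{1}$ depends only on $\varphi$ and not on the choice of lift $f$---a point the paper's terse argument leaves implicit but which is needed to pass from the single inequality $\left\Vert f\right\Vert_{\mathscr{H}}\geq\left\Vert f_{1}\right\Vert_{\mathscr{H}}$ to the infimum over all lifts.
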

\begin{proof}
With the splitting (\ref{eq:n1}), we get $f\big|_{V}=f_{1}\big|_{V}$;
so 
\begin{equation}
\varphi=f\big|_{V}=f_{1}\big|_{V},\;\mbox{and}\label{eq:n4}
\end{equation}
\[
\left\Vert f\right\Vert _{\mathscr{H}}^{2}=\left\Vert f_{0}\right\Vert _{\mathscr{H}}^{2}+\left\Vert f_{1}\right\Vert _{\mathscr{H}}^{2}\geq\left\Vert f_{1}\right\Vert _{\mathscr{H}}^{2}
\]
so by (\ref{eq:n4}), $\left\Vert \varphi\right\Vert _{\mathscr{H}_{V}}=\left\Vert f_{1}\right\Vert _{\mathscr{H}}$. 
\end{proof}
The purpose of the next section is to study these restrictions (discrete)
in detail, from cases where $\mathscr{H}$ is one of the classical
\emph{continuous} RKHSs.

\subsection{\label{sub:CM}The Cameron-Martin space $\mathscr{H}_{CM}\left(\Omega\right)$}

The Cameron\textendash Martin Hilbert space is a RKHS (abbreviated
C-M below) which gives the context for the Cameron\textendash Martin
formula which describes how abstract Wiener measure changes under
translation by elements of the Cameron-Martin RKHS. Context: Abstract
Wiener measure is quasi-invariant (under translation), not invariant;
and the C-M RKHS serves as a tool in a formula for computing of the
corresponding Radon-Nikodym derivatives, the C-H formula; see e.g.,
\cite{HH93}. The technical details involved vary, depending on the
dimension, and on suitable boundary conditions, see below.

Let $\Omega\subset\mathbb{R}^{\nu}$, satisfying conditions \ref{enu:d1}-\ref{enu:d4};
i.e., $\Omega$ is bounded, open, and connected in $\mathbb{R}^{\nu}$
with smooth boundary $\partial\Omega$. 

Let $K:\Omega\times\Omega\rightarrow\mathbb{R}$ continuous, p.d.,
given as the Green's function of $\Delta_{0}$, for the Dirichlet
boundary condition, see (\ref{eq:e4}). Thus, $\Delta_{0}$ is positive
selfadjoint, and 
\begin{align}
 & \Delta K=\delta\left(x-y\right)\text{ on \ensuremath{\Omega\times\Omega}}\label{eq:m1}\\
 & K\left(x,\cdot\right)\big|_{\partial\Omega}\equiv0\label{eq:m2}
\end{align}
see Corollary \ref{cor:K}.

Let $\mathscr{H}_{CM}\left(\Omega\right)$ be the corresponding Cameron-Martin
RKHS. 

For $\nu=1$, $\Omega=\left(0,1\right)$, take
\begin{equation}
\begin{split}\mathscr{H}_{CM}\left(0,1\right)= & \Big\{ f\:\big|\:f'\in L^{2}\left(0,1\right),\;f\left(0\right)=f\left(1\right)=0,\\
 & \left\Vert f\right\Vert _{CM}^{2}:=\int_{0}^{1}\left|f'\right|^{2}dx<\infty\Big\}
\end{split}
\label{eq:m3}
\end{equation}

For $\nu>1$, let
\begin{equation}
\begin{split}\mathscr{H}_{CM}\left(\Omega\right)= & \left\{ f\:\big|\:\nabla f\in L^{2}\left(\Omega\right),\:f\big|_{\partial\Omega}\equiv0,\:\left\Vert f\right\Vert _{CM}^{2}:=\int_{\Omega}\left|\nabla f\right|^{2}dx<\infty\right\} ,\\
 & \text{ where }\nabla=\left(\frac{\partial}{\partial x_{1}},\frac{\partial}{\partial x_{2}},\cdots,\frac{\partial}{\partial x_{\nu}}\right).
\end{split}
\label{eq:m4}
\end{equation}

\begin{rem}
In the case of $\Omega=\left(0,1\right)$, $\nu=1$, and for $K\left(s,t\right)=s\wedge t-st$,
we have $\mathscr{H}_{CM}\left(0,1\right)$ as in (\ref{eq:m3}).
The following decomposition holds:
\[
K\left(s,t\right)=\sum_{n=1}^{\infty}\frac{\sin\left(n\pi s\right)\sin\left(n\pi t\right)}{\left(n\pi\right)^{2}},\quad\left(s,t\right)\in\Omega\times\Omega.
\]
\end{rem}
\begin{proof}
Use Fourier series; or the fact that 
\[
\left\{ \frac{\sin\left(n\pi t\right)}{n\pi}\right\} _{n=1}^{\infty}
\]
is an ONB in $\mathscr{H}_{CM}\left(0,1\right)$. 
\end{proof}
In general, $\nu>1$, there exists ONB $\left\{ f_{n}\right\} _{n\in\mathbb{N}}$
in $\mathscr{H}_{CM}\left(\Omega\right)$ (see (\ref{eq:m4})), such
that 
\[
K\left(x,y\right)=\sum_{n=1}^{\infty}f_{n}\left(x\right)f_{n}\left(y\right)\;\text{on \ensuremath{\Omega\times\Omega}.}
\]

\begin{proof}
A result from the theory of RKHS. \end{proof}
\begin{lem}[The reproducing property]
 Let $K$ be the kernel of $\Delta_{0}$ for the Dirichelet boundary
condition; and let $\mathscr{H}_{CM}\left(\Omega\right)$ be the Cameron-Martin
space in (\ref{eq:m4}). Then 
\begin{equation}
\left\langle K_{x},f\right\rangle _{CM}=f\left(x\right),\quad\forall f\in\mathscr{H}_{CM},\:\forall x\in\Omega.\label{eq:m5}
\end{equation}
\end{lem}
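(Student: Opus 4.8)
The plan is to verify the reproducing property \eqref{eq:m5} directly from the defining integration-by-parts pairing for $\mathscr{H}_{CM}\left(\Omega\right)$, using that $K_{x}=K\left(x,\cdot\right)$ is the Green's function of $\Delta_{0}$ for the Dirichlet condition. First I would record the two facts already available: by \corref{K}, $\Delta K_{x}=\delta\left(x-\cdot\right)$ on $\Omega$ and $K_{x}\big|_{\partial\Omega}\equiv0$, so in particular $K_{x}\in\mathscr{H}_{CM}\left(\Omega\right)$ (its gradient is in $L^{2}\left(\Omega\right)$ and it vanishes on $\partial\Omega$). Then, for $f\in\mathscr{H}_{CM}\left(\Omega\right)$, I would compute
\[
\left\langle K_{x},f\right\rangle _{CM}=\int_{\Omega}\nabla K_{x}\cdot\nabla f\,dy=-\int_{\Omega}\left(\Delta K_{x}\right)f\,dy+\int_{\partial\Omega}\left(\partial_{n}K_{x}\right)f\,dS,
\]
where $\Delta=-\nabla^{2}$ is as in \eqref{eq:e1}; here I am using Green's identity together with the boundary condition $f\big|_{\partial\Omega}\equiv0$, which kills the boundary term. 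The first term on the right is $-\int_{\Omega}\left(-\delta\left(x-y\right)\right)f\left(y\right)dy=f\left(x\right)$ by \corref{K}, giving \eqref{eq:m5}. For $\nu=1$, $\Omega=\left(0,1\right)$, the same computation is the elementary one: $\langle K_{s},f\rangle_{CM}=\int_{0}^{1}K_{s}'(t)f'(t)\,dt$, integrate by parts using $f(0)=f(1)=0$ and $K_{s}''=-\delta(s-\cdot)$ from the proposition preceding \exaref{1}, to get $f(s)$.

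The main obstacle is that this ``integration by parts'' is only formal for general $f\in\mathscr{H}_{CM}\left(\Omega\right)$ and for the distributional identity $\Delta K_{x}=\delta_{x}$; making it rigorous requires a density/approximation argument. The clean way is to first establish \eqref{eq:m5} for $f\in C_{c}^{\infty}\left(\Omega\right)$, where Green's identity is classical and the pairing $\langle\Delta K_x, f\rangle = f(x)$ is the literal meaning of $K$ being a fundamental solution; then extend to all $f\in\mathscr{H}_{CM}\left(\Omega\right)$ by density, using that $C_{c}^{\infty}\left(\Omega\right)$ is dense in $\mathscr{H}_{CM}\left(\Omega\right)$ in the C-M norm, and that both sides of \eqref{eq:m5} are continuous in this norm — the left side by Cauchy--Schwarz, and the right side because $\mathscr{H}_{CM}\left(\Omega\right)$ is a RKHS with kernel $K$ (equivalently, because the Poincar\'e inequality \eqref{eq:pc} controls $\left|f\left(x\right)\right|$; more precisely, $\left|f(x)\right|=\left|\langle K_x,f\rangle_{CM}\right|\le\|K_x\|_{CM}\|f\|_{CM}$ once the identity is known on the dense subspace, and point-evaluation extends by continuity).

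Alternatively, and perhaps more in the spirit of the preceding text, one can bypass boundary terms entirely by working spectrally: with the ONB $\left\{f_n\right\}$ of $\mathscr{H}_{CM}\left(\Omega\right)$ for which $K\left(x,y\right)=\sum_n f_n\left(x\right)f_n\left(y\right)$, one has $K_x=\sum_n f_n\left(x\right)f_n$ with convergence in $\mathscr{H}_{CM}\left(\Omega\right)$ (the partial sums are Cauchy because $\sum_n\left|f_n(x)\right|^2=K(x,x)<\infty$), whence for any $f=\sum_n c_n f_n\in\mathscr{H}_{CM}\left(\Omega\right)$,
\[
\left\langle K_x,f\right\rangle_{CM}=\sum_n \overline{f_n(x)}\,c_n \cdot\langle f_n,f_n\rangle=\sum_n c_n f_n(x)=f(x),
\]
using that $K$ is real-valued so $f_n$ may be taken real. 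This reduces \eqref{eq:m5} to the already-asserted Mercer-type expansion of $K$, so the only thing left to justify is the interchange of the inner product with the sum, which is immediate from continuity of $\langle K_x,\cdot\rangle_{CM}$. I would present the direct integration-by-parts proof as the main argument (since it explains \emph{why} the Green's function is the reproducing kernel), relegating the density step to a sentence and citing \cite{Aro43,Aro48} for the general RKHS restriction/uniqueness facts.
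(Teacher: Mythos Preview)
Your main argument is exactly the paper's: write $\langle K_x,f\rangle_{CM}=\int_\Omega \nabla K_x\cdot\nabla f\,dy$, integrate by parts using the Dirichlet boundary conditions (\ref{eq:m2}) and (\ref{eq:m4}) to kill the boundary term, and invoke $\Delta K_x=\delta_x$ from (\ref{eq:m1}) to get $f(x)$. The paper presents only this formal computation, without the density/approximation step or the spectral alternative you supply; so your proposal is correct and follows the same route, just with more attention to rigor than the paper itself provides.
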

\begin{proof}
Note that 
\begin{eqnarray*}
\mbox{LHS}{}_{\text{\ensuremath{\left(\ref{eq:m5}\right)}}} & = & \int_{\Omega}\left(\nabla K_{x}\right)\left(\nabla f\right)dy\\
 & = & -\int_{\Omega}\left(\nabla^{2}K_{x}\right)f\left(y\right)dy\quad\left(\text{by \text{\text{\ensuremath{\left(\ref{eq:m2}\right)},\ensuremath{\left(\ref{eq:m4}\right)}}}}\right)\\
 & \underset{\text{(by \ensuremath{\left(\ref{eq:m1}\right)})}}{=} & \int_{\Omega}\Delta K_{x}f\left(y\right)dy\\
 & \underset{\text{(by \ensuremath{\left(\ref{eq:m1}\right)})}}{=} & \int_{\Omega}\delta\left(x-y\right)f\left(y\right)dy=f\left(x\right);
\end{eqnarray*}
where $dy=dy_{1}dy_{2}\cdots dy_{\nu}$ denotes the Lebesgue measure
in $\Omega\subset\mathbb{R}^{\nu}$.
\end{proof}
We shall now consider discrete subsets:
\begin{thm}
\label{thm:main}Let $\Omega$, and $V\subset\Omega$, be given. Then
\begin{enumerate}
\item Discrete case: Fix $V\subset\Omega$, $\#V=\aleph_{0}$, where $V=\left\{ x_{j}\right\} _{j=1}^{\infty}$,
$x_{j}\in\Omega$. Let 
\[
\mathscr{H}\left(V\right)=\text{RKHS of \ensuremath{K^{\left(V\right)}:=K\big|_{V\times V}}};
\]
then $\delta_{x_{j}}\in\mathscr{H}\left(V\right)$. 
\item Continuous case; by contrast: $K_{x}^{\left(V\right)}\in\mathscr{H}_{CM}\left(V\right)$,
but $\delta_{x}\notin\mathscr{H}_{CM}\left(\Omega\right)$, $x\in\Omega$.
\end{enumerate}
\end{thm}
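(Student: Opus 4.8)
The plan is to get part (1) from \thmref{del} by checking its condition (\ref{enu:del2}) for the restricted kernel $K^{\left(V\right)}=K\big|_{V\times V}$, the key device being a single smooth bump that agrees with $\delta_{x_{j}}$ on all of $V$; part (2) is then the classical continuity obstruction in the continuum. Throughout I use the standing identification $\mathscr{H}\left(V\right)=\mathscr{H}\left(K^{\left(V\right)}\right)=\mathscr{H}_{CM}\left(\Omega\right)\big|_{V}$ (Aronszajn, \lemref{sp}).

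For (1), I would fix $x_{j}\in V$ and, using that $V$ is a discrete subset of $\Omega$ so that $x_{j}$ is isolated in $V$, choose $r>0$ with $\overline{B\left(x_{j},r\right)}\subset\Omega$ and $\overline{B\left(x_{j},r\right)}\cap V=\left\{ x_{j}\right\}$, and pick $\varphi_{j}\in C_{c}^{\infty}\left(\Omega\right)$ with $\operatorname{supp}\varphi_{j}\subset B\left(x_{j},r\right)$ and $\varphi_{j}\left(x_{j}\right)=1$. Then $\varphi_{j}\in\mathscr{H}_{CM}\left(\Omega\right)$ with $\left\Vert \varphi_{j}\right\Vert _{CM}^{2}=\int_{\Omega}\left|\nabla\varphi_{j}\right|^{2}dx<\infty$ by (\ref{eq:m3})-(\ref{eq:m4}), and $\varphi_{j}\left(x\right)=\delta_{x_{j}}\left(x\right)$ for \emph{every} $x\in V$, since $x_{j}$ is the only point of $V$ in $\operatorname{supp}\varphi_{j}$. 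Given $F\in\mathscr{F}\left(V\right)$ with $x_{j}\in F$ and $\xi\colon F\to\mathbb{C}$, set $h_{\xi}=\sum_{x\in F}\xi\left(x\right)K_{x}\in\mathscr{H}_{CM}\left(\Omega\right)$, where $K_{x}=K\left(\cdot,x\right)$. The reproducing property (\ref{eq:m5}) gives $\left\langle h_{\xi},\varphi_{j}\right\rangle _{CM}=\sum_{x\in F}\overline{\xi\left(x\right)}\varphi_{j}\left(x\right)=\overline{\xi\left(x_{j}\right)}$, while $\left\Vert h_{\xi}\right\Vert _{CM}^{2}=\sum_{x,y\in F}\overline{\xi\left(x\right)}\xi\left(y\right)K\left(x,y\right)$; Cauchy-Schwarz in $\mathscr{H}_{CM}\left(\Omega\right)$ then yields
\[
\left|\xi\left(x_{j}\right)\right|^{2}\leq\left\Vert \varphi_{j}\right\Vert _{CM}^{2}\underset{F\times F}{\sum\sum}\overline{\xi\left(x\right)}\xi\left(y\right)K\left(x,y\right),
\]
which is exactly (\ref{eq:d1}) for $K^{\left(V\right)}$ at $x_{j}$, with $C_{x_{j}}=\int_{\Omega}\left|\nabla\varphi_{j}\right|^{2}dx$. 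By \thmref{del}, (\ref{enu:del2})$\Longrightarrow$(\ref{enu:del1}), $\delta_{x_{j}}\in\mathscr{H}\left(V\right)$; in fact $\left\Vert \delta_{x_{j}}\right\Vert _{\mathscr{H}\left(V\right)}^{2}\leq\int_{\Omega}\left|\nabla\varphi_{j}\right|^{2}dx$, consistent with the infimum-of-extensions description of the restricted norm in \lemref{sp}.

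For (2), $K_{x}^{\left(V\right)}=K\left(\cdot,x\right)\big|_{V}$ is by construction the reproducing-kernel vector of the restricted RKHS $\mathscr{H}\left(V\right)$ at $x$, so it lies in $\mathscr{H}\left(V\right)$ with nothing to prove. The substance is that $\delta_{x}\notin\mathscr{H}_{CM}\left(\Omega\right)$: since $K$ is continuous and positive definite on $\Omega\times\Omega$, the map $y\mapsto K_{y}$ is norm-continuous into $\mathscr{H}_{CM}\left(\Omega\right)$ (because $\left\Vert K_{y}-K_{y'}\right\Vert _{CM}^{2}=K\left(y,y\right)-2K\left(y,y'\right)+K\left(y',y'\right)$), hence every $f\in\mathscr{H}_{CM}\left(\Omega\right)$ is a continuous function on $\Omega$ via $f\left(y\right)=\left\langle K_{y},f\right\rangle _{CM}$, whereas the function $\delta_{x}$ of (\ref{eq:del0}) is not; in the model case $\nu=1$ one already has $\mathscr{H}_{CM}\left(0,1\right)\subset C[0,1]$ directly from (\ref{eq:m3}) and $\delta_{x}\notin C[0,1]$. (Alternatively, the criterion of \lemref{s2-4} fails for the putative Dirac vector in the continuum: unlike on the discrete set $V$, the indicator of a single point is not an admissible $\mathscr{H}_{CM}$-function.)

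The step I expect to be the real obstacle is not the estimate but the \emph{uniformity} it demands: condition (\ref{enu:del2}) of \thmref{del} requires one constant $C_{x_{j}}$ valid for all finite $F\subset V$ simultaneously, so one needs a \emph{single} $\varphi_{j}$ vanishing on the whole set $V\setminus\left\{ x_{j}\right\}$, not merely on each $F$. This is precisely where discreteness of $V$ in $\Omega$ is used (a small ball about $x_{j}$ misses the rest of $V$); for a merely countable, dense $V$ no such $\varphi_{j}$ exists and, consistently, the conclusion $\delta_{x_{j}}\in\mathscr{H}\left(V\right)$ would genuinely fail. A secondary point to treat carefully is the identification $\mathscr{H}\left(V\right)=\mathscr{H}_{CM}\left(\Omega\right)\big|_{V}$ via \lemref{sp}, which is what legitimizes moving between the abstract RKHS of $K^{\left(V\right)}$ and the concrete extensions $\varphi_{j}\in\mathscr{H}_{CM}\left(\Omega\right)$.
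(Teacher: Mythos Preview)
Your argument for part (1) is correct and is genuinely different from the paper's. The paper does not verify criterion (\ref{enu:del2}) of \thmref{del}; instead it defers to \thmref{main2}, where a graph structure $G=(V,E)$ with the finite-neighbor condition (\ref{eq:L0}) is imposed on $V$, conductances $c_{ij}=1/\mathrm{res}(x_i,x_j)$ are introduced via (\ref{eq:L1}), and $\delta_{x_i}$ is exhibited explicitly as the finite linear combination $c(x_i)K_{x_i}-\sum_{x_j\sim x_i}c_{ij}K_{x_j}$ of kernel vectors (the network formula (\ref{eq:g121})). Your bump-function route avoids the graph apparatus entirely: you produce a single $\varphi_j\in\mathscr{H}_{CM}(\Omega)$ with $\varphi_j\big|_V=\delta_{x_j}$, which by \lemref{sp} already gives $\delta_{x_j}\in\mathscr{H}(V)$ (your detour through (\ref{eq:d1}) and Cauchy--Schwarz is an equivalent rephrasing). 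What the paper's approach buys is the additional identification $\langle\delta_{x_i},f\rangle_{\mathscr{H}(V)}=(\Delta_V f)(x_i)$ with a concrete graph Laplacian; what yours buys is a shorter, hypothesis-light proof of membership alone, needing only that each $x_j$ is isolated in $V$ rather than a full edge/conductance structure.

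One caution on part (2): your continuity argument ``$K$ continuous $\Rightarrow$ every $f\in\mathscr{H}_{CM}(\Omega)$ continuous'' is fine for $\nu=1$, but for $\nu\geq 2$ the Green's kernel in (\ref{eq:dk5}) blows up on the diagonal, so $y\mapsto K_y$ is not norm-continuous and that line of reasoning breaks. The paper handles this by invoking elliptic regularity directly (see the paragraph following \thmref{main}); your fallback ``$\delta_x\notin C[0,1]$ via (\ref{eq:m3})'' covers $\nu=1$, and for higher $\nu$ you should likewise argue from the Sobolev-type description (\ref{eq:m4}) rather than from continuity of $K$.
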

The proof will be given in the next section.

To see that $\delta_{x}\notin\mathscr{H}_{CM}\left(\Omega\right)$,
we use (\ref{eq:m3}) when $\nu=1$, and (\ref{eq:m4}) when $\nu>1$. 

In general, by elliptic regularity, $\mathscr{H}_{CM}\left(\Omega\right)$
is a RKHS of continuous functions; and $\delta_{x}$ is \emph{not
}a function, so not in $\mathscr{H}_{CM}\left(\Omega\right)$. 

But the RKHS of $K^{\left(V\right)}:=K\big|_{V\times V}$ is a \emph{discrete}
RKHS, and $\delta_{x}\in\mathscr{H}\left(V\right)$; proof below;
\thmref{main2}.

\section{\label{sec:net}Infinite network of resistors}

Here we introduce a family of positive definite kernels $k:V\times V\rightarrow\mathbb{R}$,
defined on infinite sets $V$ of vertices for a given graph $G=\left(V,E\right)$
with edges $E\subset V\times V\backslash(\text{diagonal})$.

There is a large literature dealing with analysis on infinite graphs;
see e.g., \cite{JP10,JP11,JP13}; see also \cite{OS05,BCF07,CJ11}.

Our main purpose here is to point out that every assignment of resistors
on the edges $E$ in $G$ yields a p.d. kernel $k$, and an associated
RKHS $\mathscr{H}=\mathscr{H}\left(k\right)$ such that
\begin{equation}
\delta_{x}\in\mathscr{H},\quad\text{for all \ensuremath{x\in V}.}\label{eq:g1}
\end{equation}

\begin{defn}
\label{def:g}Let $G=\left(V,E\right)$ be as above. Assume 
\begin{enumerate}[label=\arabic{enumi}.]
\item $\left(x,y\right)\in E\Longleftrightarrow\left(y,x\right)\in E$;
\item $\exists c:E\rightarrow\mathbb{R}_{+}$ (a conductance function =
1 / resistance) such that

\begin{enumerate}[label=(\roman{enumii})]
\item \label{enu:g1} $c_{\left(xy\right)}=c_{\left(yx\right)}$, $\forall\left(xy\right)\in E$; 
\item \label{enu:g2}for all $x\in V$, $\#\left\{ y\in V\:|\:c_{\left(xy\right)}>0\right\} <\infty$;
and
\item $\exists o\in V$ s.t. for $\forall x\in V\backslash\left\{ o\right\} $,
$\exists$ edges $\left(x_{i},x_{i+1}\right)_{0}^{n-1}\in E$ s.t.
$x_{o}=0$, and $x_{n}=x$; called connectedness. 
\end{enumerate}
\end{enumerate}
\end{defn}
Given $G=\left(V,E\right)$, and a fixed conductance function $c:E\rightarrow\mathbb{R}_{+}$
as specified above, we now define a corresponding Laplace operator
$\Delta=\Delta^{\left(c\right)}$ acting on functions on $V$, i.e.,
on $\mathscr{F}unc\left(V\right)$ by 
\begin{equation}
\left(\Delta f\right)\left(x\right)=\sum_{y\sim x}c_{xy}\left(f\left(x\right)-f\left(y\right)\right).\label{eq:g2}
\end{equation}
See Fig \ref{fig:res1}-\ref{fig:res2} for examples of networks of
resistors: $c_{xy}=\frac{1}{res\left(x,y\right)}$, if $\left(x,y\right)\in E$.

\begin{figure}
\begin{tabular}{cc}
\includegraphics[width=0.3\columnwidth]{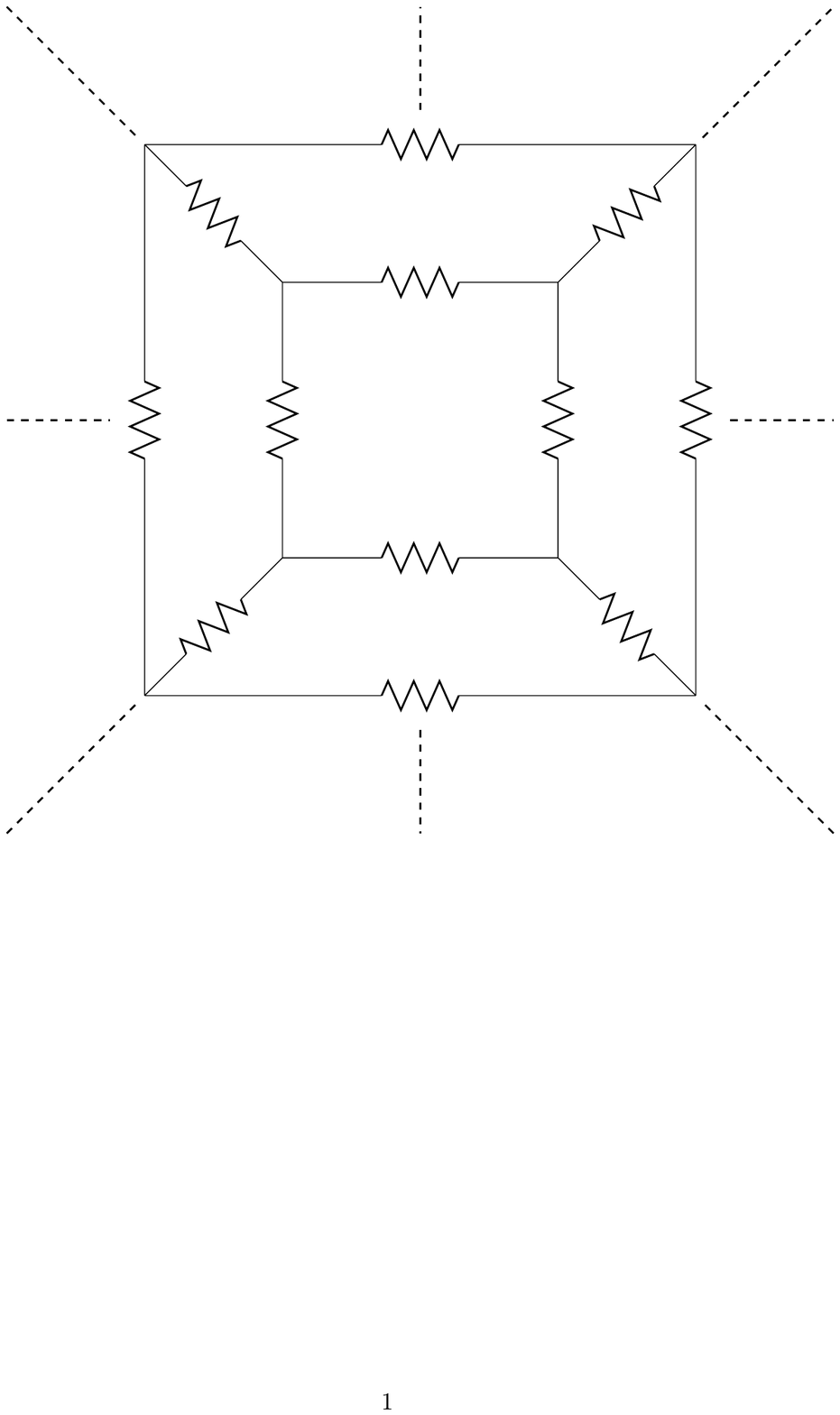} & \includegraphics[width=0.35\columnwidth]{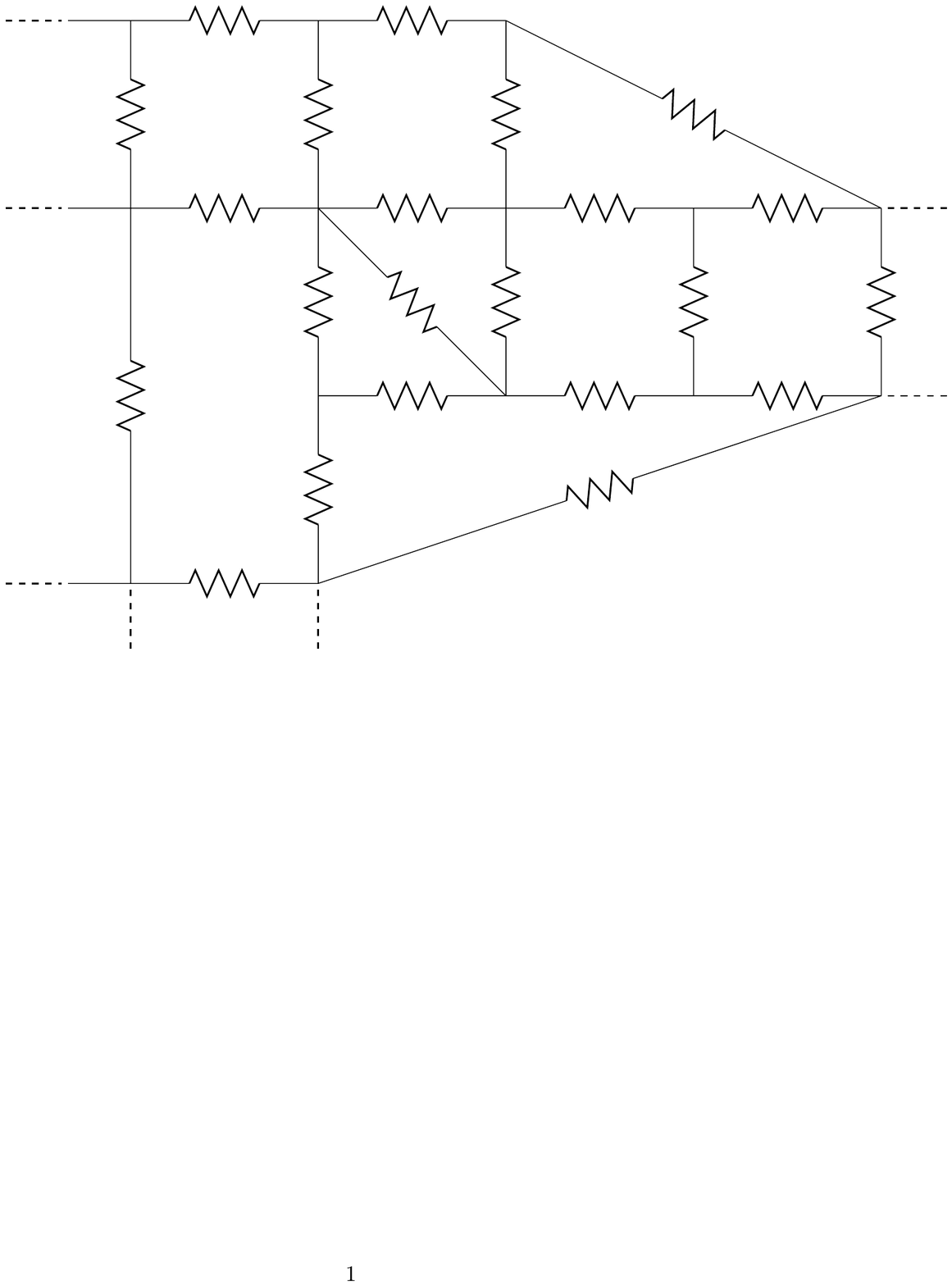}\tabularnewline
\end{tabular}

\protect\caption{\label{fig:res1}Examples of configuration of resistors in a network.}

\end{figure}

\begin{figure}
\includegraphics[width=0.6\columnwidth]{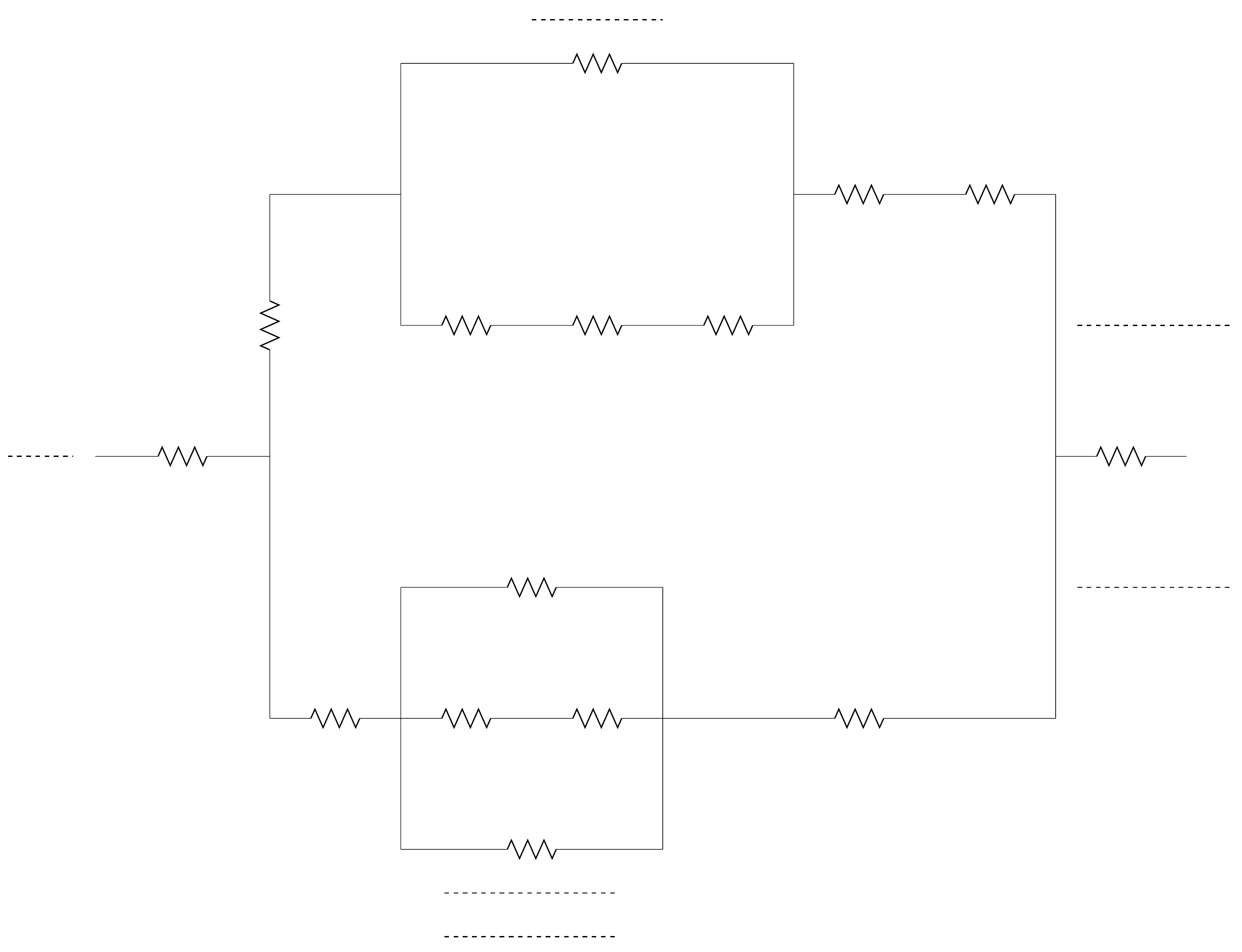}

\protect\caption{\label{fig:res2}Example of configuration of resistors in a \textquotedblleft large\textquotedblright{}
network.}
\end{figure}

Let $\mathscr{H}$ be the Hilbert space defined as follows: A function
$f$ on $V$ is in $\mathscr{H}$ iff $f\left(o\right)=0$, and 
\begin{equation}
\left\Vert f\right\Vert _{\mathscr{H}}^{2}:=\frac{1}{2}\underset{\underset{\subset V\times V}{\left(x,y\right)\in E}}{\sum\sum}c_{xy}\left|f\left(x\right)-f\left(y\right)\right|^{2}<\infty.\label{eq:g3}
\end{equation}

\begin{lem}[\cite{JP10}]
\label{lem:lap2}For all $x\in V\backslash\left\{ o\right\} $, $\exists v_{x}\in\mathscr{H}$
s.t. 
\begin{equation}
f\left(x\right)-f\left(o\right)=\left\langle v_{x},f\right\rangle _{\mathscr{H}},\quad\forall f\in\mathscr{H}\label{eq:g4}
\end{equation}
where 
\begin{equation}
\left\langle h,f\right\rangle _{\mathscr{H}}=\frac{1}{2}\underset{\left(x,y\right)\in E}{\sum\sum}c_{xy}\left(\overline{h\left(x\right)}-\overline{h\left(y\right)}\right)\left(f\left(x\right)-f\left(y\right)\right),\quad\forall h,f\in\mathscr{H}.\label{eq:g41}
\end{equation}
(The system $\left\{ v_{x}\right\} $ is called a system of \uline{dipoles}.)\end{lem}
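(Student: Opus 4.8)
The plan is to obtain $v_{x}$ from the Riesz representation theorem, so the whole matter reduces to showing that, for each fixed $x\in V\backslash\left\{ o\right\} $, the linear functional
\[
L_{x}:f\longmapsto f\left(x\right)-f\left(o\right),\qquad f\in\mathscr{H},
\]
is bounded with respect to the energy norm (\ref{eq:g3}). First I would invoke connectedness (item~3 of \defref{g}): choose a finite edge-path $o=x_{0},x_{1},\dots,x_{n}=x$ with $\left(x_{i},x_{i+1}\right)\in E$ for each $i$. Telescoping gives
\[
f\left(x\right)-f\left(o\right)=\sum_{i=0}^{n-1}\bigl(f\left(x_{i+1}\right)-f\left(x_{i}\right)\bigr),
\]
and Cauchy\textendash Schwarz yields
\[
\bigl|f\left(x\right)-f\left(o\right)\bigr|^{2}\leq\Bigl(\sum_{i=0}^{n-1}c_{x_{i}x_{i+1}}^{-1}\Bigr)\Bigl(\sum_{i=0}^{n-1}c_{x_{i}x_{i+1}}\bigl|f\left(x_{i+1}\right)-f\left(x_{i}\right)\bigr|^{2}\Bigr).
\]
The terms of the second factor occur among those of the symmetrized sum in (\ref{eq:g3}), so that factor is dominated by a fixed multiple of $\left\Vert f\right\Vert _{\mathscr{H}}^{2}$; since the path is finite and each $c_{x_{i}x_{i+1}}>0$, the first factor is a finite constant $C_{x}<\infty$. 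Hence $\left|L_{x}\left(f\right)\right|\leq\sqrt{C_{x}}\,\left\Vert f\right\Vert _{\mathscr{H}}$, so $L_{x}$ is bounded.

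With $L_{x}$ bounded, the Riesz representation theorem furnishes a unique $v_{x}\in\mathscr{H}$ with $\left\langle v_{x},f\right\rangle _{\mathscr{H}}=L_{x}\left(f\right)=f\left(x\right)-f\left(o\right)$ for all $f\in\mathscr{H}$, which is (\ref{eq:g4}); the identity (\ref{eq:g41}) is just the polarization of (\ref{eq:g3}). Note the normalization $v_{x}\left(o\right)=0$ is automatic, since every element of $\mathscr{H}$ vanishes at $o$ by the definition of the space.

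The only genuinely nontrivial ingredient standing behind ``apply Riesz'' is that $\mathscr{H}$ \textemdash{} functions on $V$ with $f\left(o\right)=0$ and finite energy \textemdash{} is in fact complete, and this is the step I expect to require the most care. I would argue it in the standard way: the estimate above shows each point-evaluation $f\mapsto f\left(x\right)$ is $\mathscr{H}$-continuous, so a $\mathscr{H}$-Cauchy sequence $\left(f_{k}\right)$ converges pointwise on $V$ to a function $f$ with $f\left(o\right)=0$; then Fatou's lemma applied to the nonnegative double sum in (\ref{eq:g3}) gives $\left\Vert f\right\Vert _{\mathscr{H}}<\infty$ together with $\left\Vert f_{k}-f\right\Vert _{\mathscr{H}}\to0$. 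Alternatively, completeness (and the construction of the dipoles) may simply be quoted from \cite{JP10}, in which case the proof is just the boundedness computation of the first paragraph followed by Riesz.
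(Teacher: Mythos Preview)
Your proposal is correct and follows essentially the same route as the paper: telescope $f(x)-f(o)$ along a finite edge-path provided by connectedness, apply Cauchy--Schwarz with weights $c_{x_ix_{i+1}}^{-1}$ to bound the linear functional by the energy norm, and then invoke Riesz. The paper's proof is terser (it does not spell out the telescoping or discuss completeness), but the displayed inequality there is exactly your Cauchy--Schwarz estimate, so the approaches coincide.
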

\begin{proof}
Let $x\in V\backslash\left\{ o\right\} $, and use (\ref{eq:g2})
together with the Schwarz-inequality to show that 
\[
\left|f\left(x\right)-f\left(o\right)\right|^{2}\leq\sum_{i}\frac{1}{c_{x_{i}x_{i+1}}}\sum_{i}c_{x_{i}x_{i+1}}\left|f\left(x_{i}\right)-f\left(x_{i+1}\right)\right|^{2}.
\]
An application of Riesz' lemma then yields the desired conclusion. 

Note that $v_{x}=v_{x}^{\left(c\right)}$ in (\ref{eq:g4}) depends
on the choice of base point $o\in V$, and on conductance function
$c$; see \ref{enu:g1}-\ref{enu:g2} and (\ref{eq:g3}).
\end{proof}
The \emph{resistance metric} $R^{\left(c\right)}\left(x,y\right)=res\left(x,y\right)$
is as follows:
\begin{align*}
R^{\left(c\right)}\left(x,y\right) & =\inf\left\{ K\::\:\left|f\left(x\right)-f\left(y\right)\right|^{2}\leq K\left\Vert f\right\Vert _{\mathscr{H}}^{2},\:\forall f\in\mathscr{H}\right\} \\
 & =\sup\left\{ \left|f\left(x\right)-f\left(y\right)\right|\::\:f\in\mathscr{H},\:\left\Vert f\right\Vert _{\mathscr{H}}\leq1\right\} .
\end{align*}

Now set 
\begin{equation}
k^{\left(c\right)}\left(x,y\right)=\left\langle v_{x},v_{y}\right\rangle _{\mathscr{H}},\quad\forall\left(xy\right)\in\left(V\backslash\left\{ o\right\} \right)\times\left(V\backslash\left\{ o\right\} \right).\label{eq:g5}
\end{equation}
It follows from a theorem that $k^{\left(c\right)}$ is a Green's
function for the Laplacian $\Delta^{\left(c\right)}$ in the sense
that 
\begin{equation}
\Delta^{\left(c\right)}k^{\left(c\right)}\left(x,\cdot\right)=\delta_{x}\label{eq:g6}
\end{equation}
where the dot in (\ref{eq:g6}) is the dummy-variable in the action.
(Note that the solution to (\ref{eq:g6}) is not unique.)

Finally, we note that 
\begin{equation}
\Delta v_{x}=\delta_{x}-\delta_{o},\quad\forall x\in V\backslash\left\{ o\right\} .\label{eq:g11}
\end{equation}
And (\ref{eq:g11}) in turn follows from (\ref{eq:g4}), (\ref{eq:g2})
and a straightforward computation.
\begin{cor}
\label{cor:lap1}Let $G=\left(V,E\right)$ and conductance $c:E\rightarrow\mathbb{R}_{+}$
be as specified above. Let $\Delta=\Delta^{\left(c\right)}$ be the
corresponding Laplace operator. Let $\mathscr{H}=\mathscr{H}\left(k^{c}\right)$
be the RKHS. Then 
\begin{equation}
\left\langle \delta_{x},f\right\rangle _{\mathscr{H}}=\left(\Delta f\right)\left(x\right)\label{eq:g12}
\end{equation}
and 
\begin{equation}
\delta_{x}=c\left(x\right)v_{x}-\sum_{y\sim x}c_{xy}v_{y}\in\mathscr{H}\label{eq:g121}
\end{equation}
holds for all $x\in V$. \end{cor}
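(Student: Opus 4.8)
The plan is to prove the two assertions in turn, using the reproducing property of $\mathscr{H}=\mathscr{H}(k^{(c)})$ together with the dipole identity (\ref{eq:g4}) and the explicit form of the Laplacian (\ref{eq:g2}). The first claim, (\ref{eq:g12}), is essentially a restatement of \propref{dh5} once we know $\delta_x\in\mathscr{H}$; so the real content is (\ref{eq:g121}), which simultaneously exhibits $\delta_x$ as a concrete finite linear combination of dipoles (hence confirms $\delta_x\in\mathscr{H}$, reproving (\ref{eq:g1}) in this setting) and pins down the operator $\Delta$ on the reproducing vectors.

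First I would establish (\ref{eq:g121}). Set $w_x := c(x)v_x - \sum_{y\sim x} c_{xy} v_y \in \mathscr{H}$ (a \emph{finite} sum by assumption \ref{enu:g2}, and an element of $\mathscr{H}$ since each $v_y\in\mathscr{H}$). To show $w_x = \delta_x$ as elements of $\mathscr{H}$, it suffices, since $\{v_z : z\in V\setminus\{o\}\}$ has dense span in $\mathscr{H}$ and $\mathscr{H}$ is a space of functions with $f(o)=0$, to check that $w_x$ acts as the functional $f\mapsto f(x)$ against the dipoles — equivalently, to evaluate $w_x$ as a function on $V$ and verify $w_x(z) = \delta_{x}(z)$ for all $z$. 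Using $v_y(z) = \langle v_z, v_y\rangle_{\mathscr{H}} = k^{(c)}(z,y)$ and (\ref{eq:g4}) in the form $\langle v_z, w_x\rangle_{\mathscr{H}} = w_x(z) - w_x(o) = w_x(z)$, I compute
\[
w_x(z) = \langle v_z, w_x\rangle_{\mathscr{H}} = c(x)\langle v_z,v_x\rangle_{\mathscr{H}} - \sum_{y\sim x} c_{xy}\langle v_z,v_y\rangle_{\mathscr{H}}.
\]
Now the key move: by symmetry of the inner product and (\ref{eq:g4}), $\langle v_z, v_x\rangle_{\mathscr{H}} = v_x(z) = v_z(x)$; so the right-hand side equals $c(x)v_z(x) - \sum_{y\sim x} c_{xy} v_z(y) = (\Delta v_z)(x)$ by the definition (\ref{eq:g2}) of $\Delta$ (with $c(x)=\sum_{y\sim x}c_{xy}$). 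By (\ref{eq:g11}), $\Delta v_z = \delta_z - \delta_o$, whence $w_x(z) = \delta_z(x) - \delta_o(x) = \delta_{x}(z)$ for all $z\in V$ (using $x\ne o$; the base-point case is handled separately, or absorbed by the normalization $v_x(o)=0$). This identifies $w_x$ with $\delta_x$ as a function on $V$, and since both lie in $\mathscr{H}$ this is the desired Hilbert-space identity (\ref{eq:g121}); in particular $\delta_x\in\mathscr{H}$.

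Having (\ref{eq:g121}), I would deduce (\ref{eq:g12}) by testing against an arbitrary $f\in\mathscr{H}$: $\langle \delta_x, f\rangle_{\mathscr{H}} = c(x)\langle v_x,f\rangle_{\mathscr{H}} - \sum_{y\sim x} c_{xy}\langle v_y,f\rangle_{\mathscr{H}} = c(x)(f(x)-f(o)) - \sum_{y\sim x}c_{xy}(f(y)-f(o))$ by (\ref{eq:g4}); the $f(o)$ terms cancel because $c(x) = \sum_{y\sim x}c_{xy}$, leaving $c(x)f(x) - \sum_{y\sim x}c_{xy}f(y) = (\Delta f)(x)$. The main obstacle — really the only subtlety — is bookkeeping around the base point $o$: the dipoles $v_x$ are only defined for $x\in V\setminus\{o\}$, and one must check the computation is consistent at $z=o$ and at $x=o$, which is where the normalization $v_x(o)=0$ and the relation (\ref{eq:g11}) (itself valid only for $x\ne o$) must be invoked carefully; apart from that, everything is a direct consequence of Riesz/(\ref{eq:g4}) and the definition of $\Delta$.
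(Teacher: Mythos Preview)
Your proof is correct and follows essentially the same approach as the paper: both arguments reduce to testing against the dense span of dipoles $\{v_z\}$ and invoke (\ref{eq:g11}) together with the definition (\ref{eq:g2}) of $\Delta$. The only difference is organizational---you establish (\ref{eq:g121}) first and deduce (\ref{eq:g12}) from it, whereas the paper verifies (\ref{eq:g12}) directly on $f=v_y$ (citing (\ref{eq:g6}) and deferring details to \cite{JT15})---but the underlying computation is the same, and your explicit handling of the base-point bookkeeping is a welcome addition to what the paper leaves implicit.
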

\begin{proof}
Since the system $\left\{ v_{x}\right\} $ of dipoles (see (\ref{eq:g4}))
span a dense subspace in $\mathscr{H}$, it is enough to verify (\ref{eq:g12})
when $f=v_{y}$ for $y\in V\backslash\left\{ o\right\} $. But in
this case, (\ref{eq:g12}) follows from (\ref{eq:g6}) and a direct
calculation. (For details, see \cite{JT15}.)\end{proof}
\begin{cor}
\label{cor:lap4}Let $G=\left(V,E\right)$, and conductance $c:E\rightarrow\mathbb{R}_{+}$
be as before; let $\Delta^{\left(c\right)}$ be the Laplace operator,
and $\mathscr{H}_{E}^{\left(c\right)}$ the energy-Hilbert space in
Definition \ref{def:g} (see (\ref{eq:g3})). Let $k^{\left(c\right)}\left(x,y\right)=\left\langle v_{x},v_{y}\right\rangle _{\mathscr{H}_{E}}$
be the kernel from (\ref{eq:g5}), i.e., the Green's function of $\Delta^{\left(c\right)}$.
Then the two Hilbert spaces $\mathscr{H}_{E}$, and $\mathscr{H}\left(k^{\left(c\right)}\right)=RKHS\left(k^{\left(c\right)}\right)$,
are naturally isometrically isomorphic via $v_{x}\longmapsto k_{x}^{\left(c\right)}$
where $k_{x}^{\left(c\right)}=k^{\left(c\right)}\left(x,\cdot\right)$
for all $x\in V$. \end{cor}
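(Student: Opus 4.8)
The plan is to exhibit the map $v_x \longmapsto k_x^{(c)}$ as a unitary by checking that it preserves inner products on a dense subspace and has dense range, which is the standard way to establish an isometric isomorphism between a Hilbert space and an RKHS built from a kernel that arises as a Gram matrix. First I would recall that, by Lemma \ref{lem:lap2}, the dipoles $\{v_x : x \in V\setminus\{o\}\}$ span a dense subspace of $\mathscr{H}_E^{(c)}$ (this is the content of the Riesz-representation construction: any $f \in \mathscr{H}_E$ orthogonal to all $v_x$ satisfies $f(x)-f(o)=0$ for all $x$, hence $f\equiv 0$ in $\mathscr{H}_E$ since functions are taken modulo constants and normalized by $f(o)=0$). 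On the RKHS side, $\{k_x^{(c)} : x \in V\setminus\{o\}\}$ spans a dense subspace of $\mathscr{H}(k^{(c)})$ by the very definition of the RKHS (Definition \ref{def:d1}). So the candidate map has dense domain and dense range.

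Next I would define $W$ on the dense subspace $\mathrm{span}\{v_x\}$ by $W\!\left(\sum_x c_x v_x\right) = \sum_x c_x k_x^{(c)}$ and verify it is well defined and isometric in one stroke by computing
\begin{equation}
\left\langle \sum_x c_x v_x, \sum_y d_y v_y \right\rangle_{\mathscr{H}_E} = \sum_x\sum_y \overline{c_x} d_y \left\langle v_x, v_y\right\rangle_{\mathscr{H}_E} = \sum_x\sum_y \overline{c_x} d_y\, k^{(c)}(x,y),
\end{equation}
which by the defining inner product (\ref{eq:pd3}) of $\mathscr{H}(k^{(c)})$ equals $\left\langle \sum_x c_x k_x^{(c)}, \sum_y d_y k_y^{(c)}\right\rangle_{\mathscr{H}(k^{(c)})}$. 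This uses only the definition $k^{(c)}(x,y)=\langle v_x,v_y\rangle_{\mathscr{H}_E}$ from (\ref{eq:g5}) and is a one-line computation; in particular it automatically handles well-definedness, since any element of the kernel of the Gram form on one side lies in the kernel on the other. Then $W$ extends by continuity to an isometry $\mathscr{H}_E \to \mathscr{H}(k^{(c)})$, and since its range contains the dense set $\{k_x^{(c)}\}$ and is closed (being the isometric image of a complete space), $W$ is surjective, hence a unitary isomorphism.

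Finally I would note the intertwining is immediate from (\ref{eq:g4}) and the reproducing property: for $f \in \mathscr{H}_E$ one has $\langle v_x, f\rangle_{\mathscr{H}_E} = f(x)-f(o)$, while $\langle k_x^{(c)}, Wf\rangle_{\mathscr{H}(k^{(c)})} = (Wf)(x)$, so under the identification the two Hilbert spaces carry the "same" point-difference functionals; this also pins down that the isomorphism is the natural one and not merely an abstract unitary. I do not expect a genuine obstacle here: the only point requiring a little care is the density of $\{v_x\}$ in $\mathscr{H}_E$, which is really a restatement of how $\mathscr{H}_E$ was built (equivalently, that a finite-energy function modulo constants is determined by its voltage drops from the base point $o$), and the observation that this whole corollary is essentially the general principle "an RKHS whose kernel is given as a Gram matrix $k(x,y)=\langle u_x,u_y\rangle$ in some Hilbert space $\mathscr{K}$ is unitarily isomorphic to $\overline{\mathrm{span}}\{u_x\} \subseteq \mathscr{K}$," specialized to $\mathscr{K}=\mathscr{H}_E$ and $u_x=v_x$.
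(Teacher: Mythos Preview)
Your proposal is correct and follows essentially the same approach as the paper: the paper's proof simply computes $\left\Vert \sum_{x\in F}\xi(x)k_x^{(c)}\right\Vert_{\mathscr{H}(k^{(c)})}^2 = \sum\sum \overline{\xi(x)}\xi(y)k^{(c)}(x,y) = \sum\sum \overline{\xi(x)}\xi(y)\langle v_x,v_y\rangle_{\mathscr{H}_E} = \left\Vert \sum_{x\in F}\xi(x)v_x\right\Vert_{\mathscr{H}_E}^2$ and then invokes ``standard completion from dense subspaces.'' Your argument is the same Gram-matrix isometry check, only written out in more detail (you verify density of the $v_x$ explicitly and spell out the surjectivity via closed range), which is a welcome addition but not a different route.
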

\begin{proof}
Let $F\in\mathscr{F}\left(V\right)$, and let $\xi$ be a function
on $F$; then 
\begin{eqnarray*}
\left\Vert \sum\nolimits _{x\in F}\xi\left(x\right)k_{x}^{\left(c\right)}\right\Vert _{\mathscr{H}\left(k^{\left(c\right)}\right)}^{2} & = & \underset{F\times F}{\sum\sum}\overline{\xi\left(x\right)}\xi\left(y\right)k^{\left(c\right)}\left(x,y\right)\\
 & \underset{\left(\ref{eq:g5}\right)}{=} & \underset{F\times F}{\sum\sum}\overline{\xi\left(x\right)}\xi\left(y\right)\left\langle v_{x},v_{y}\right\rangle _{\mathscr{H}_{E}}\\
 & = & \left\Vert \sum\nolimits _{x\in F}\xi\left(x\right)v_{x}\right\Vert _{\mathscr{H}_{E}}^{2}.
\end{eqnarray*}

The remaining steps in the proof of the Corollary now follow from
the standard completion from dense subspaces in the respective two
Hilbert spaces $\mathscr{H}_{E}$ and $\mathscr{H}\left(k^{\left(c\right)}\right)$. 
\end{proof}
In the following we show how the kernels $k^{\left(c\right)}:V\times V\rightarrow\mathbb{R}$
from (\ref{eq:g5}) in \lemref{lap2} are related to metrics on $V$;
so called \emph{resistance metrics} (see, e.g., \cite{JP10,AJSV13}.)
\begin{cor}
\label{cor:lap2}Let $G=\left(V,E\right)$, and conductance $c:E\rightarrow\mathbb{R}_{+}$
be as above; and let $k^{\left(c\right)}\left(x,y\right):=\left\langle v_{x},v_{y}\right\rangle _{\mathscr{H}_{E}}$
be the corresponding Green's function for the graph Laplacian $\Delta^{\left(c\right)}$. 

Then there is a \uline{metric} $R\left(=R^{\left(c\right)}=\mbox{the resistance metric}\right)$,
such that 
\begin{equation}
k^{\left(c\right)}\left(x,y\right)=\frac{R^{\left(c\right)}\left(o,x\right)+R^{\left(c\right)}\left(o,y\right)-R^{\left(c\right)}\left(x,y\right)}{2}\label{eq:gm1}
\end{equation}
holds on $V\times V$. Here the base-point $o\in V$ is chosen and
fixed s.t. 
\begin{equation}
\left\langle v_{x},f\right\rangle _{\mathscr{H}_{E}}=f\left(x\right)-f\left(o\right),\quad\forall f\in\mathscr{H}_{E},\;\forall x\in V.\label{eq:gm2}
\end{equation}
\end{cor}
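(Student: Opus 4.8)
The plan is to show that the resistance metric $R^{(c)}$ introduced just before (\ref{eq:g5}) coincides with the map $(x,y)\mapsto\|v_x-v_y\|_{\mathscr{H}_E}^2$, and then to read off (\ref{eq:gm1}) from a polarization identity. First I would note that, by \lemref{lap2} and linearity in (\ref{eq:g4}),
\[
f(x)-f(y)=\langle v_x-v_y,\,f\rangle_{\mathscr{H}_E},\qquad\forall f\in\mathscr{H}_E,
\]
so that the Schwarz inequality yields $|f(x)-f(y)|^2\le\|v_x-v_y\|_{\mathscr{H}_E}^2\,\|f\|_{\mathscr{H}_E}^2$, and this constant is optimal (test against $f=v_x-v_y$). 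Comparing with the definition of $R^{(c)}$ gives $R^{(c)}(x,y)=\|v_x-v_y\|_{\mathscr{H}_E}^2$. Taking $x=o$ in (\ref{eq:g4}) and invoking uniqueness of the dipole forces $v_o=0$, whence $R^{(c)}(o,x)=\|v_x\|_{\mathscr{H}_E}^2=k^{(c)}(x,x)$ by (\ref{eq:g5}); note also that (\ref{eq:gm2}) is then simply the dipole relation (\ref{eq:g4}) restated.

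Next I would expand, using that all quantities are real,
\[
\|v_x-v_y\|_{\mathscr{H}_E}^2=\|v_x\|_{\mathscr{H}_E}^2+\|v_y\|_{\mathscr{H}_E}^2-2\langle v_x,v_y\rangle_{\mathscr{H}_E}=R^{(c)}(o,x)+R^{(c)}(o,y)-2k^{(c)}(x,y),
\]
and solve for $k^{(c)}(x,y)$ to obtain (\ref{eq:gm1}). Conceptually this is just the standard correspondence between a kernel and a distance of negative type, run in reverse: $k^{(c)}$ is the Gram kernel attached to $R^{(c)}$ with respect to the marked point $o$.

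It remains to check that $R^{(c)}$ is genuinely a metric. Symmetry and $R^{(c)}(x,x)=0$ are immediate from $R^{(c)}(x,y)=\|v_x-v_y\|_{\mathscr{H}_E}^2$; for $x\ne y$, connectedness of $G$ furnishes a finitely supported $f\in\mathscr{H}_E$ with $f(x)\ne f(y)$ (interpolate along a path from $x$ to $y$ and extend by the value at $y$; this has finite energy since each vertex has finite degree), so $R^{(c)}(x,y)>0$. The one point that is not purely formal is the triangle inequality: the isometric embedding $x\mapsto v_x$ of $(V,\sqrt{R^{(c)}})$ into $\mathscr{H}_E$ shows directly only that $\sqrt{R^{(c)}}$ is a metric, and upgrading this to subadditivity of $R^{(c)}$ itself is the classical fact that effective resistances obey the series law and Rayleigh's monotonicity principle. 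I would either reproduce that via the energy/flow-minimization characterization of $R^{(c)}$ or cite \cite{JP10}; this is the step I expect to be the main obstacle, while everything else reduces to the polarization identities above.
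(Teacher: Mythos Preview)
Your argument is essentially identical to the paper's: define $R^{(c)}(x,y)=\|v_x-v_y\|_{\mathscr{H}_E}^2$, cite \cite{JP10} for the fact that this is a genuine metric, and then read off (\ref{eq:gm1}) from the polarization identity $\|v_x-v_y\|^2=\|v_x\|^2+\|v_y\|^2-2\langle v_x,v_y\rangle$. You add a little more detail than the paper (connecting $\|v_x-v_y\|^2$ to the variational definition of $R^{(c)}$ via Cauchy--Schwarz, noting $v_o=0$, and sketching nondegeneracy), but the core of the proof is the same.
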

\begin{proof}
See \cite{JP10}. Set 
\begin{equation}
R^{\left(c\right)}\left(x,y\right)=\left\Vert v_{x}-v_{y}\right\Vert _{\mathscr{H}_{E}}^{2}.\label{eq:gm3}
\end{equation}
We proved in \cite{JP10} that $R^{\left(c\right)}\left(x,y\right)$
in (\ref{eq:gm3}) indeed defines a \emph{metric} on $V$; the so
called \emph{resistance metric}. It represents the voltage-drop from
$x$ to $y$ when 1 Amp is fed into $\left(G,c\right)$ at the point
$x$, and then extracted at $y$. 

The verification of (\ref{eq:gm1}) is now an easy computation, as
follows:
\begin{eqnarray*}
 &  & \frac{R^{\left(c\right)}\left(o,x\right)+R^{\left(c\right)}\left(o,y\right)-R^{\left(c\right)}\left(x,y\right)}{2}\\
 & = & \frac{\left\Vert v_{x}\right\Vert _{\mathscr{H}_{E}}^{2}+\left\Vert v_{y}\right\Vert _{\mathscr{H}_{E}}^{2}-\left\Vert v_{x}-v_{y}\right\Vert _{\mathscr{H}_{E}}^{2}}{2}\\
 & = & \left\langle v_{x},v_{y}\right\rangle _{\mathscr{H}_{E}}\\
 & = & k^{\left(c\right)}\left(x,y\right)\quad\text{(by \ensuremath{\left(\ref{eq:g5}\right)})}.
\end{eqnarray*}
\end{proof}
\begin{cor}
The functions $R^{\left(c\right)}\left(\cdot,\cdot\right)$ which
arise as in (\ref{eq:gm1}) and (\ref{eq:gm3}) are \emph{conditionally
negative definite}, i.e., for all finite subsets $F\subset V$ and
functions $\xi$ on $F$, such that $\sum_{x\in F}\xi_{x}=0$, we
have:
\begin{equation}
\sum_{F}\sum_{F}\overline{\xi}_{x}\xi_{y}R^{\left(c\right)}\left(x,y\right)\leq0.\label{eq:R1}
\end{equation}
(Note that (\ref{eq:R1}) follows from (\ref{eq:gm3}).)

Moreover, if $R^{\left(c\right)}$ arises as a restriction of a metric
on $\mathbb{R}^{\nu}$, then there is a quadratic form $Q$ on $\mathbb{R}^{\nu}$
(possibly $Q=0$), and a positive measure $\mu$ on $\mathbb{R}^{\nu}$
such that
\begin{equation}
R^{\left(c\right)}\left(o,x\right)=Q\left(x\right)+\int_{\mathbb{R}^{\nu}}\frac{1-\cos\left(x\cdot\xi\right)}{\left|\xi\right|^{2}}d\mu\left(\xi\right),\label{eq:R2}
\end{equation}
where
\begin{equation}
\int_{\mathbb{R}^{\nu}}\frac{d\mu\left(\xi\right)}{1+\left|\xi\right|^{2}}<\infty.\label{eq:R3}
\end{equation}
\end{cor}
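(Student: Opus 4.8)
For the first assertion I would simply unpack the Hilbert-space realization $R^{(c)}(x,y)=\|v_x-v_y\|_{\mathscr{H}_E}^2$ from (\ref{eq:gm3}), as the parenthetical remark after (\ref{eq:R1}) already suggests. Fix $F\in\mathscr{F}(V)$ and a function $\xi$ on $F$ with $\sum_{x\in F}\xi_x=0$; expanding $\langle v_x-v_y,v_x-v_y\rangle_{\mathscr{H}_E}$ inside $\sum_F\sum_F\overline{\xi}_x\xi_y(\cdots)$, the two ``diagonal'' pieces $\sum_F\sum_F\overline{\xi}_x\xi_y\|v_x\|_{\mathscr{H}_E}^2$ and $\sum_F\sum_F\overline{\xi}_x\xi_y\|v_y\|_{\mathscr{H}_E}^2$ vanish because $\sum_y\xi_y=0$ and $\sum_x\overline{\xi}_x=0$, while the cross terms collapse to $-2\bigl\|\sum_{x\in F}\xi_x v_x\bigr\|_{\mathscr{H}_E}^2$. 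Thus $\sum_F\sum_F\overline{\xi}_x\xi_y R^{(c)}(x,y)=-2\bigl\|\sum_{x\in F}\xi_x v_x\bigr\|_{\mathscr{H}_E}^2\le 0$, which is (\ref{eq:R1}).

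For the representation (\ref{eq:R2})--(\ref{eq:R3}), my plan is to reduce everything to Schoenberg's classical L\'evy--Khinchine representation of continuous conditionally negative definite functions on $\mathbb{R}^\nu$. Under the hypothesis, $V\subset\mathbb{R}^\nu$ and $R^{(c)}$ is the restriction to $V\times V$ of a translation-invariant metric $\rho$ on $\mathbb{R}^\nu$; I would write $\rho(x,y)=\psi(x-y)$ and translate so that the base point $o$ is the origin, so that $R^{(c)}(o,x)=\psi(x)$ for $x\in V$. The function $\psi\colon\mathbb{R}^\nu\to[0,\infty)$ is then even (because $\rho$ is symmetric), vanishes at the origin, and --- under the natural reading of the hypothesis --- is continuous for the Euclidean topology and conditionally negative definite on all of $\mathbb{R}^\nu$; the last property passes from $V$ either by continuity, or directly from the Hilbert-space realization of the extending metric exactly as in the first part.

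Next I would invoke the L\'evy--Khinchine theorem in the form: a continuous, real-valued, conditionally negative definite $\psi$ on $\mathbb{R}^\nu$ with $\psi(0)=0$ has the form
\[
\psi(x)=\langle x,Ax\rangle+\int_{\mathbb{R}^\nu\setminus\{0\}}\bigl(1-\cos\langle x,\xi\rangle\bigr)\,d\lambda(\xi),
\]
with $A\ge 0$ symmetric and $\lambda$ a positive measure satisfying $\int_{\mathbb{R}^\nu\setminus\{0\}}\min(1,|\xi|^2)\,d\lambda(\xi)<\infty$ --- reality of $\psi$ and the normalization $\psi(0)=0$ being what remove the constant, drift, and compensator terms of the general formula. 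Setting $Q(x):=\langle x,Ax\rangle$ (a quadratic form, possibly $0$) and $d\mu(\xi):=|\xi|^2\,d\lambda(\xi)$, the identity $1-\cos\langle x,\xi\rangle=|\xi|^2\cdot\frac{1-\cos\langle x,\xi\rangle}{|\xi|^2}$ rewrites the integral as the one in (\ref{eq:R2}), and the L\'evy condition becomes (\ref{eq:R3}) by the elementary two-regime comparison: $\tfrac12\le(1+|\xi|^2)^{-1}\le 1$ for $|\xi|\le1$, so $\int_{\{|\xi|\le1\}}(1+|\xi|^2)^{-1}\,d\mu$ is finite iff $\int_{\{|\xi|\le1\}}|\xi|^2\,d\lambda$ is; and $(2|\xi|^2)^{-1}\le(1+|\xi|^2)^{-1}\le|\xi|^{-2}$ for $|\xi|>1$, so $\int_{\{|\xi|>1\}}(1+|\xi|^2)^{-1}\,d\mu$ is finite iff $\int_{\{|\xi|>1\}}d\lambda$ is. Restricting $x$ back to $V$ then yields (\ref{eq:R2})--(\ref{eq:R3}).

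The computations above are routine; the real obstacle is the hypothesis itself. To apply Schoenberg's theorem one needs the extension of $R^{(c)}$ off $V$ to be a \emph{translation-invariant} metric on $\mathbb{R}^\nu$, continuous for the Euclidean topology and still conditionally negative definite --- which is how I would read ``arises as a restriction of a metric on $\mathbb{R}^\nu$'' here --- and the only genuine verification beyond citing the classical theory is the two-regime comparison above matching the transformed L\'evy measure condition with (\ref{eq:R3}).
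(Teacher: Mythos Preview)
Your proposal is correct and follows exactly the route the paper indicates: the first part unpacks (\ref{eq:gm3}) as the parenthetical remark suggests, and the second part is the Schoenberg/L\'evy--Khinchine representation that the paper simply defers to the references \cite{AJV14,BTA04}. In fact the paper's own proof consists only of that citation, so you have supplied considerably more detail than the authors do; your caveat about needing translation invariance and continuity of the extending metric is well taken and is precisely the kind of hypothesis implicit in those references.
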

\begin{proof}
For details on this last point, see for example \cite{AJV14} and
\cite{BTA04}. \end{proof}
\begin{prop}
In the two cases: (i) $B\left(t\right)$, Brownian motion on $0<t<\infty$;
and (ii) the Brownian bridge $B_{bri}\left(t\right)$, $0<t<1$, (see
Fig \ref{fig:bb}) the corresponding resistance metric $R$ is as
follows:
\begin{enumerate}
\item[(i)]  If $V=\left\{ x_{i}\right\} _{i=1}^{\infty}\subset\left(0,\infty\right)$,
$x_{1}<x_{2}<\cdots$, then 
\begin{equation}
R_{B}^{\left(V\right)}\left(x_{i},x_{j}\right)=\left|x_{i}-x_{j}\right|.
\end{equation}

\item[(ii)]  If $W=\left\{ x_{i}\right\} _{i=1}^{\infty}\subset\left(0,1\right)$,
$0<x_{1}<x_{2}<\cdots<1$, then 
\begin{equation}
R_{bridge}^{\left(W\right)}\left(x_{i},x_{j}\right)=\left|x_{i}-x_{j}\right|\cdot\left(1-\left|x_{i}-x_{j}\right|\right).
\end{equation}
In the completion w.r.t. the resistance metric $R_{bridge}^{\left(W\right)}$,
the two endpoints $x=0$ and $x=1$ are identified; see also Fig \ref{fig:bb}.
\end{enumerate}
\end{prop}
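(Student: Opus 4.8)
The plan is to reduce both formulas to the two explicit covariance kernels together with the general relationship between a resistance metric and its reproducing kernel. Write $K(s,t)=s\wedge t$ for Brownian motion and, by Example~\ref{exa:1}, $K(s,t)=s\wedge t-st$ for the Brownian bridge. For a discrete sample set the relevant space is $\mathscr{H}(V)=\mathrm{RKHS}\bigl(K\big|_{V\times V}\bigr)$ as in \thmref{main}, and the first step is the identity
\begin{equation}
R^{(V)}(x,y)=K(x,x)+K(y,y)-2K(x,y),\qquad x,y\in V.\label{eq:propR}
\end{equation}
To see this, note that by the reproducing property the functional $f\mapsto f(x)-f(y)$ on $\mathscr{H}(V)$ is represented by $K_{x}^{(V)}-K_{y}^{(V)}$, so
\[
R^{(V)}(x,y)=\sup\bigl\{\,\left|f(x)-f(y)\right|^{2}\::\:\left\Vert f\right\Vert _{\mathscr{H}(V)}\le1\,\bigr\}=\bigl\Vert K_{x}^{(V)}-K_{y}^{(V)}\bigr\Vert _{\mathscr{H}(V)}^{2},
\]
and expanding this using $\langle K_{x}^{(V)},K_{y}^{(V)}\rangle_{\mathscr{H}(V)}=K(x,y)$ gives (\ref{eq:propR}). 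This is consistent with Corollary~\ref{cor:lap2}: for both of these kernels the section $K(\cdot,x)$ vanishes at the boundary point $0$ (and, in the bridge case, also at $1$), so once $o=0$ is adjoined to the vertex set, $K_{x}$ is precisely the dipole $v_{x}$ based at $o$; then $k^{(c)}(x,y)=\langle v_{x},v_{y}\rangle=K(x,y)$ and $R^{(c)}(x,y)=\Vert v_{x}-v_{y}\Vert^{2}$ reproduces the same formula (see also Corollary~\ref{cor:lap4}).

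Granting (\ref{eq:propR}), case~(i) is immediate: for $x_{i}\le x_{j}$ in $(0,\infty)$,
\[
R_{B}^{(V)}(x_{i},x_{j})=x_{i}+x_{j}-2(x_{i}\wedge x_{j})=x_{j}-x_{i}=\left|x_{i}-x_{j}\right|.
\]
For case~(ii), take $x_{i}<x_{j}$ in $(0,1)$ and compute with $K(s,t)=s\wedge t-st$:
\begin{align*}
R_{bridge}^{(W)}(x_{i},x_{j}) & =(x_{i}-x_{i}^{2})+(x_{j}-x_{j}^{2})-2(x_{i}-x_{i}x_{j})\\
 & =(x_{j}-x_{i})-(x_{j}-x_{i})^{2}=\left|x_{i}-x_{j}\right|\bigl(1-\left|x_{i}-x_{j}\right|\bigr),
\end{align*}
which is the asserted formula.

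For the identification of the two endpoints I would use that $K(s,t)=s\wedge t-st$ extends continuously to $[0,1]\times[0,1]$ with $K(0,\cdot)\equiv K(1,\cdot)\equiv0$; hence the displayed formula for $R_{bridge}$ extends to the closure, giving $R_{bridge}(0,1)=1\cdot(1-1)=0$, equivalently $R_{bridge}(x,y)\to0$ as $x\to0^{+}$ and $y\to1^{-}$. Since two Cauchy sequences whose mutual distance tends to $0$ define the same point of the metric completion, $x=0$ and $x=1$ are identified there. (By contrast, for Brownian motion $R_{B}(x,0)=x$, which only forces $0$ to be a genuine boundary point, with no identification.)

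I expect the one non-routine point to be the first step: justifying that the resistance metric of the \emph{sampled} process is literally computed from the restricted covariance. Concretely one must invoke \thmref{main} to know that $\mathscr{H}(V)$ is the RKHS of $K\big|_{V\times V}$, and then check that the network dipoles coincide with the kernel sections $K_{x}$ --- this is exactly where the vanishing of Cameron--Martin functions on the Dirichlet boundary enters. Once that identification is made, each of (i) and (ii) is a single line of algebra, and a short check (writing $R$ as a squared norm) shows the two expressions indeed satisfy the triangle inequality, so they really are metrics.
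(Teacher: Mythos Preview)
Your proof is correct and is exactly the computation the paper's framework is set up to deliver: the paper itself states this proposition without a written proof, but your use of $R(x,y)=K(x,x)+K(y,y)-2K(x,y)$ is precisely the resistance-metric formula recorded later as \eqref{eq:L1} (equivalently $R^{(c)}(x,y)=\Vert v_x-v_y\Vert_{\mathscr{H}_E}^{2}$ from \eqref{eq:gm3}), after which both cases are one-line substitutions of the two covariance kernels. Your treatment of the endpoint identification via $R_{bridge}(0,1)=0$ is also the intended reading.
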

\begin{figure}
\includegraphics[width=0.5\columnwidth]{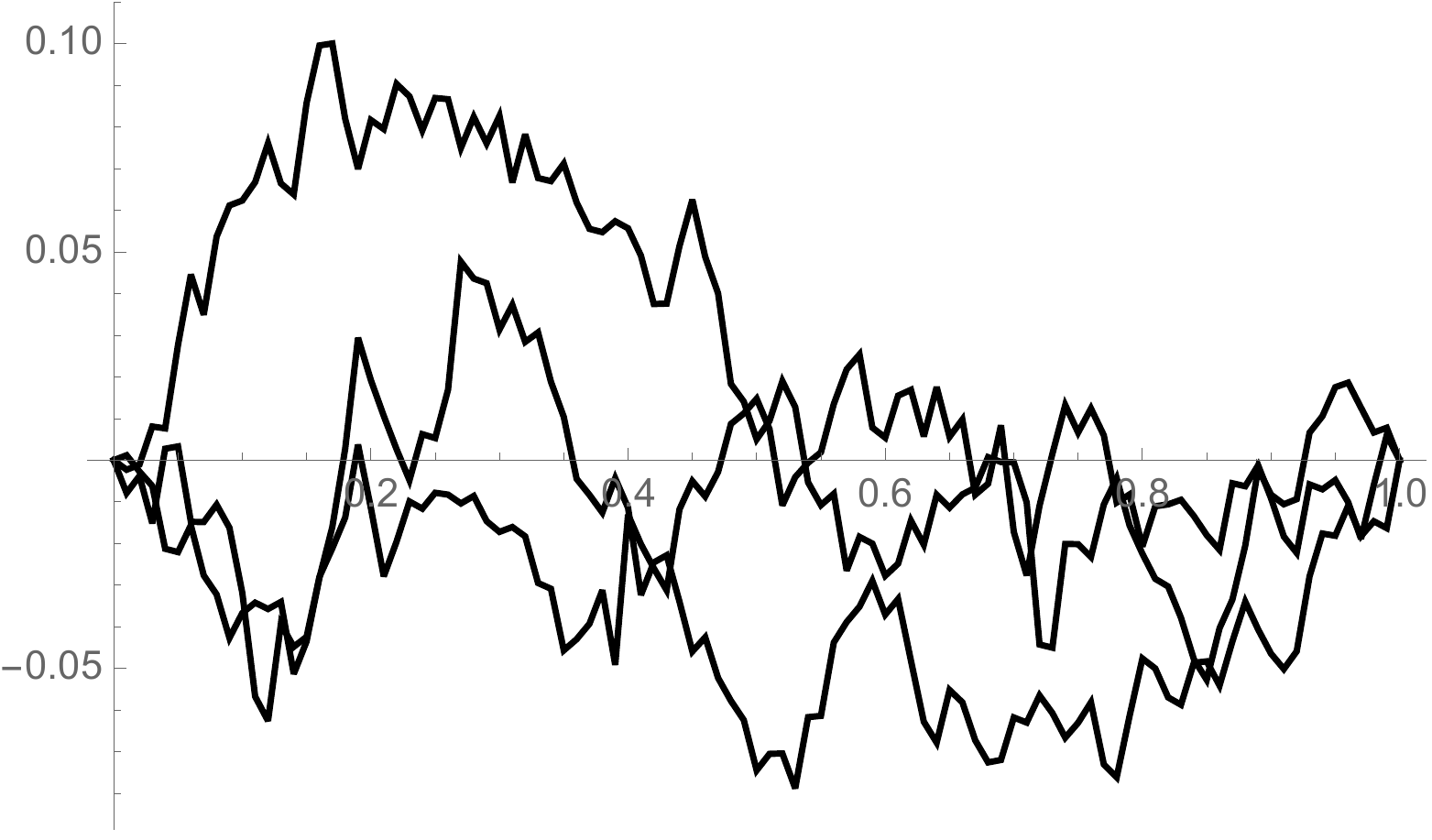}

\protect\caption{\label{fig:bb}Brownian bridge $B_{bri}\left(t\right)$, a simulation
of three sample paths of the Brownian bridge.}
\end{figure}

The Brownian bridge $B_{bri}\left(t\right)$ is realized on a probability
space $\Omega\left(\simeq C\left(\left[0,1\right]\right)\right)$
such that $B_{bri}\left(0\right)=B_{bri}\left(1\right)=0$, and 
\begin{equation}
\mathbb{E}\left(B_{bri}\left(s\right)B_{bri}\left(t\right)\right)=s\wedge t-st=k_{B}\left(s,t\right),\label{eq:bb1}
\end{equation}
where $\mathbb{E}\left(\cdots\right)=\int_{\Omega}\cdots dP$, and
$P$ denotes Wiener measure on $\Omega$.

If $\left\{ B\left(t\right)\right\} _{t\in\in\left[0,1\right]}$ denotes
the usual Brownian motion, $B\left(0\right)=0$, with covariance 
\begin{equation}
\mathbb{E}\left(B\left(s\right)B\left(t\right)\right)=s\wedge t;\label{eq:bb2}
\end{equation}
then we may take for $B_{bri}\left(t\right)$ as follows:
\begin{equation}
B_{bri}\left(t\right)=\left(1-t\right)B\left(\frac{t}{1-t}\right),\quad\forall t\in\left(0,1\right).\label{eq:bb3}
\end{equation}

\section{\label{sec:drkhsv}The Discrete RKHSs $\mathscr{H}\left(V\right)$
from Brownian motion }

Let $V$, $K$ be as above. To get that $\delta_{x_{i}}\in\mathscr{H}\left(V\right)$,
we may specify a graph $G=\left(V,E\right)$ with vertices $V$ and
edges $E\subset V\times V\backslash\left\{ \text{diagonal}\right\} $,
and assume that, for all $x_{i}\in V$, 
\begin{equation}
\#\left\{ x_{j}\:\big|\:x_{i}\sim x_{j}\right\} <\infty,\quad\forall x_{i}\in V\label{eq:L0}
\end{equation}
(finite neighborhood set); see Fig \ref{fig:fnbh}. Fix a base-point
$o\in V$.

\begin{figure}
\includegraphics[width=0.5\columnwidth]{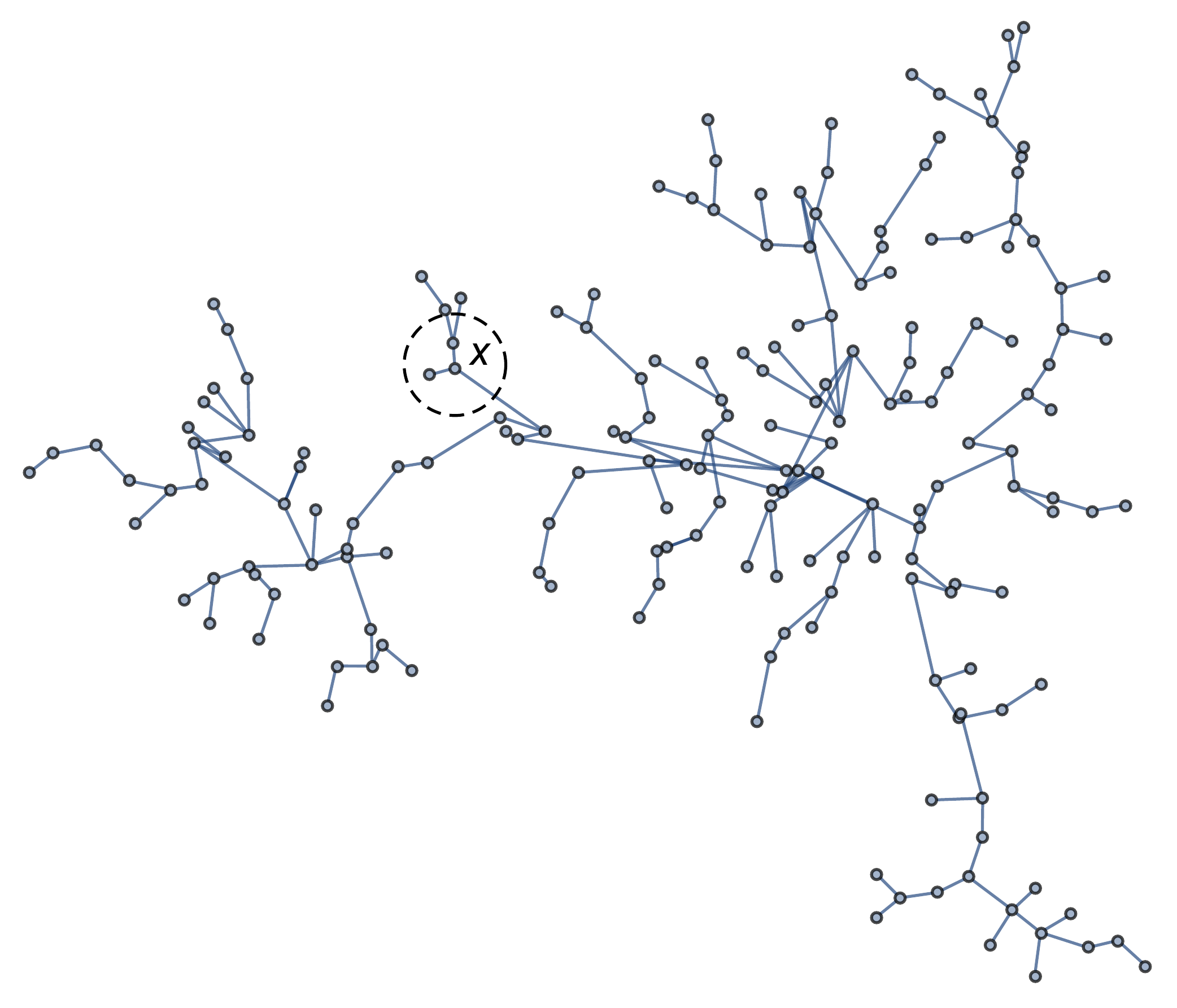}

\protect\caption{\label{fig:fnbh}$\forall i$, $\#\left\{ j\:s.t.\:\left(x_{i}x_{j}\right)\in E\right\} <\infty$,
every vertex $x_{i}$ has a finite set of neighbors.}
\end{figure}

Set $\mathscr{H}_{E}:=$ all functions $f$ on $V$ s.t. $f\left(o\right)=0$,
with
\[
\left\Vert f\right\Vert _{E}^{2}=\frac{1}{2}\underset{i\sim j}{\sum\sum}\frac{1}{res\left(i,j\right)}\left|f\left(x_{i}\right)-f\left(x_{j}\right)\right|^{2};
\]
where $res\left(i,j\right)$ denotes resistance between $x_{i}$ and
$x_{j}$, and 
\[
c_{ij}:=\frac{1}{res\left(i,j\right)}
\]
is the conductance. Then $\mathscr{H}_{E}\left(G\right)$ is a RKHS
for the graph $G=\left(V,E\right)$, i.e., the energy Hilbert space.

Note that 
\[
\widetilde{K}_{x_{i}}:=K\left(x_{i},\cdot\right)\Big|_{V}\in\mathscr{H}_{E}\left(G\right)
\]
and 
\[
\left\langle \widetilde{K}_{x_{i}},f\right\rangle _{\mathscr{H}_{E}}=f\left(x_{i}\right),\quad\forall x_{i}\in V,\;\forall f\in\mathscr{H}_{E}\left(G\right).
\]

\begin{lem}
The mapping $P_{G}:CM\left(\Omega\right)\rightarrow\mathscr{H}_{E}\left(G\right)$,
$K_{x_{i}}\longmapsto\widetilde{K}_{x_{i}}$, defines a projection
of the Cameron-Martin space (see (\ref{eq:m4})) onto $\mathscr{H}_{E}\left(G\right)$.\end{lem}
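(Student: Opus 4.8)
The plan is to recognize $P_{G}$ as the Aronszajn restriction map $f\mapsto f\big|_{V}$, and then to read off from Lemma~\ref{lem:sp} together with the orthogonal splitting in (\ref{eq:n2}) that, under the canonical isometric identification of $\mathscr{H}_{E}\left(G\right)$ with a closed subspace of $\mathscr{H}_{CM}\left(\Omega\right)$, this map is an orthogonal projection.

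First I would pin down that $\mathscr{H}_{E}\left(G\right)$ is exactly the RKHS of the restricted kernel $K^{\left(V\right)}:=K\big|_{V\times V}$. By construction $\mathscr{H}_{E}\left(G\right)$ is a Hilbert space of functions on $V$ in which each $\widetilde{K}_{x_{i}}=K\left(x_{i},\cdot\right)\big|_{V}$ obeys the reproducing identity $\langle\widetilde{K}_{x_{i}},f\rangle_{\mathscr{H}_{E}}=f\left(x_{i}\right)$ (granted in the paragraph preceding the lemma). Since a nonzero function on $V$ cannot vanish at every $x_{i}\in V$, the family $\{\widetilde{K}_{x_{i}}:x_{i}\in V\}$ is total in $\mathscr{H}_{E}\left(G\right)$, and $\langle\widetilde{K}_{x_{i}},\widetilde{K}_{x_{j}}\rangle_{\mathscr{H}_{E}}=\widetilde{K}_{x_{i}}\left(x_{j}\right)=K\left(x_{i},x_{j}\right)=K^{\left(V\right)}\left(x_{i},x_{j}\right)$; hence $\mathscr{H}_{E}\left(G\right)=\mathscr{H}\left(K^{\left(V\right)}\right)$. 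By Lemma~\ref{lem:sp} this equals $\{f\big|_{V}:f\in\mathscr{H}_{CM}\left(\Omega\right)\}$ with $\|\varphi\|_{\mathscr{H}_{E}}=\inf\{\|f\|_{CM}:f\big|_{V}=\varphi\}$, so $f\mapsto f\big|_{V}$ is a contraction $\mathscr{H}_{CM}\left(\Omega\right)\to\mathscr{H}_{E}\left(G\right)$, and on each generator $K_{x_{i}}$ it returns $K_{x_{i}}\big|_{V}=\widetilde{K}_{x_{i}}$; thus it agrees with the map $P_{G}$ of the statement.

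Next comes the identification with a projection. Set $\mathscr{N}_{V}:=\{f\in\mathscr{H}_{CM}\left(\Omega\right):f\big|_{V}\equiv0\}$, a closed subspace, and use $\mathscr{H}_{CM}\left(\Omega\right)=\mathscr{N}_{V}\oplus\mathscr{N}_{V}^{\perp}$ as in (\ref{eq:n2}). From the reproducing property (\ref{eq:m5}), $f\in\mathscr{N}_{V}$ iff $f\perp K_{x_{i}}$ for all $x_{i}\in V$, so $\mathscr{N}_{V}^{\perp}=\overline{\mathrm{span}}\{K_{x_{i}}:x_{i}\in V\}$. For a finite combination,
\[
\Big\|\sum_{i}c_{i}\widetilde{K}_{x_{i}}\Big\|_{\mathscr{H}_{E}}^{2}=\sum_{i,j}\overline{c_{i}}c_{j}K^{\left(V\right)}\left(x_{i},x_{j}\right)=\sum_{i,j}\overline{c_{i}}c_{j}K\left(x_{i},x_{j}\right)=\Big\|\sum_{i}c_{i}K_{x_{i}}\Big\|_{CM}^{2},
\]
so $K_{x_{i}}\mapsto\widetilde{K}_{x_{i}}$ extends to a unitary $U:\mathscr{N}_{V}^{\perp}\to\mathscr{H}_{E}\left(G\right)$ (surjective by totality of the $\widetilde{K}_{x_{i}}$). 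Writing $f=f_{0}+f_{1}$ as in (\ref{eq:n1})--(\ref{eq:n2}) with $f_{0}\in\mathscr{N}_{V}$, $f_{1}\in\mathscr{N}_{V}^{\perp}$, one has $f\big|_{V}=f_{1}\big|_{V}$ by (\ref{eq:n4}), and $f_{1}$ is the unique element of $\mathscr{N}_{V}^{\perp}$ restricting to it, i.e. $f_{1}=U^{-1}\left(f\big|_{V}\right)$ (and it realizes the infimum in (\ref{eq:n3})). Hence $U^{-1}\circ P_{G}=P_{\mathscr{N}_{V}^{\perp}}$, the orthogonal projection onto $\mathscr{N}_{V}^{\perp}$.

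I would close by recording the consequences: transported through $U$, the map $P_{G}$ is the orthogonal projection $P_{\mathscr{N}_{V}^{\perp}}$ of $\mathscr{H}_{CM}\left(\Omega\right)$ onto the closed subspace $\mathscr{N}_{V}^{\perp}\cong\mathscr{H}_{E}\left(G\right)$; in particular it is idempotent, self-adjoint, and contractive, $\|P_{G}f\|_{\mathscr{H}_{E}}=\|f_{1}\|_{CM}\le\|f\|_{CM}$, which is exactly the assertion that $P_{G}$ is a projection of $\mathscr{H}_{CM}\left(\Omega\right)$ onto $\mathscr{H}_{E}\left(G\right)$. The one substantive input is Lemma~\ref{lem:sp} (Aronszajn's restriction theorem) plus the reproducing property of $\widetilde{K}_{x_{i}}$ in $\mathscr{H}_{E}\left(G\right)$, which is already in hand; thus the only real obstacle is the bookkeeping one of making precise what ``projection'' means between two a priori distinct function Hilbert spaces, and that is dissolved by the unitary identification $\mathscr{H}_{E}\left(G\right)\cong\mathscr{N}_{V}^{\perp}$. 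No estimate beyond the inf-formula contraction bound is needed.
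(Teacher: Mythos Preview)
Your proof is correct and shares the same core with the paper's argument: both hinge on the norm identity
\[
\Big\|\sum_{i}c_{i}\widetilde{K}_{x_{i}}\Big\|_{\mathscr{H}_{E}\left(G\right)}^{2}=\sum_{i,j}\overline{c_{i}}c_{j}K\left(x_{i},x_{j}\right)=\Big\|\sum_{i}c_{i}K_{x_{i}}\Big\|_{CM\left(\Omega\right)}^{2},
\]
which shows that $J_{V}:\widetilde{K}_{x_{i}}\mapsto K_{x_{i}}$ (your $U^{-1}$) is an isometry of $\mathscr{H}_{E}\left(G\right)$ into $CM\left(\Omega\right)$. The paper stops there and simply records $P_{G}=J_{V}J_{V}^{*}$, using the general fact that for any isometry $J$ the operator $JJ^{*}$ is the orthogonal projection onto $\operatorname{ran}J$; you instead unpack this by invoking Lemma~\ref{lem:sp} and the splitting (\ref{eq:n1})--(\ref{eq:n3}) to identify $\operatorname{ran}J_{V}=\mathscr{N}_{V}^{\perp}$ explicitly and to recognize $P_{G}$ as the restriction map $f\mapsto f\big|_{V}$ (equivalently $J_{V}^{*}$). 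Your route is longer but more transparent about \emph{which} subspace one is projecting onto and why the two function spaces are canonically identified; the paper's one-line $P_{G}=J_{V}J_{V}^{*}$ is slicker but leaves that bookkeeping implicit.
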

\begin{proof}
Note that $J_{V}:\mathscr{H}_{E}\left(G\right)\rightarrow CM\left(\Omega\right)$,
$J_{V}\widetilde{K}_{x_{j}}=K_{x_{j}}$, is an isometry. In fact,
we have that 
\[
\left\Vert \sum\nolimits _{j}\xi_{j}\widetilde{K}_{x_{j}}\right\Vert _{\mathscr{H}_{E}\left(G\right)}^{2}=\left\Vert \sum\nolimits _{j}\xi_{j}K_{x_{j}}\right\Vert _{CM\left(\Omega\right)}^{2};
\]
where $\left\Vert f\right\Vert _{CM}^{2}=\int_{\Omega}\left|\nabla f\right|^{2}dx$,
for all $f\in CM\left(\Omega\right)$. Now, $P_{G}=J_{V}J_{V}^{*}$. \end{proof}
\begin{cor}
$CM\left(\Omega\right)\ominus\mathscr{H}_{E}\left(G\right)=\left\{ f\in CM\left(\Omega\right)\:;\:f\left(x_{j}\right)=0,\;\forall x_{j}\in V\right\} $
(See also \lemref{sp}.)\end{cor}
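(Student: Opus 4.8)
The plan is to realize $\mathscr{H}_{E}\left(G\right)$ concretely inside $CM\left(\Omega\right)$ via the isometry $J_{V}$ of the preceding lemma, and then to identify its orthogonal complement using nothing more than the reproducing property (\ref{eq:m5}) of the Cameron--Martin kernel $K$. The statement ``$CM\left(\Omega\right)\ominus\mathscr{H}_{E}\left(G\right)$'' is to be read, as in the lemma, through this embedding: $\mathscr{H}_{E}\left(G\right)$ is transported onto $\mbox{ran}\,J_{V}$, and we compute $\left(\mbox{ran}\,J_{V}\right)^{\perp}$.

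First I would recall that $J_{V}\widetilde{K}_{x_{j}}=K_{x_{j}}$ extends, by the norm identity displayed in the proof of the lemma, to an isometry from $\mathscr{H}_{E}\left(G\right)$ into $CM\left(\Omega\right)$. Since $\mathscr{H}_{E}\left(G\right)$ is by construction the RKHS of the restricted kernel $K^{\left(V\right)}=K\big|_{V\times V}$, the sections $\left\{ \widetilde{K}_{x_{j}}\right\} _{x_{j}\in V}$ are total in it; hence $\mbox{ran}\,J_{V}$ is exactly the closed subspace $\mathscr{M}:=\overline{span}\left\{ K_{x_{j}}:x_{j}\in V\right\} \subset CM\left(\Omega\right)$, and $P_{G}=J_{V}J_{V}^{*}$ is the orthogonal projection onto $\mathscr{M}$, so $CM\left(\Omega\right)\ominus\mathscr{H}_{E}\left(G\right)=\mathscr{M}^{\perp}$.

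Next, for $f\in CM\left(\Omega\right)$ I would use that $f\in\mathscr{M}^{\perp}$ if and only if $\left\langle K_{x_{j}},f\right\rangle _{CM}=0$ for every $x_{j}\in V$ (orthogonality to a spanning set), and then invoke the reproducing identity (\ref{eq:m5}), $\left\langle K_{x_{j}},f\right\rangle _{CM}=f\left(x_{j}\right)$, to rewrite this as $f\left(x_{j}\right)=0$ for all $x_{j}\in V$. This yields both inclusions simultaneously and proves the asserted equality; it also fits the general restriction picture of \lemref{sp} and its corollary, with $\mathscr{N}_{V}=\left\{ f\in CM\left(\Omega\right):f\big|_{V}=0\right\} $ and $\mathscr{N}_{V}^{\perp}\cong\mathscr{H}_{V}=\mathscr{H}_{E}\left(G\right)$. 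There is no real obstacle here: the only points needing care are the bookkeeping of the embedding $J_{V}$ in the first step, and the (routine) totality of the sections $\widetilde{K}_{x_{j}}$ in $\mathscr{H}_{E}\left(G\right)$, which is automatic since $\mathscr{H}_{E}\left(G\right)=\mathscr{H}\big(K^{\left(V\right)}\big)$ is a reproducing kernel Hilbert space.
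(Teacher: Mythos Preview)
Your proof is correct and follows essentially the same route as the paper: identify the image of $J_{V}$ with $\overline{span}\{K_{x_{j}}:x_{j}\in V\}$ and then use the reproducing identity $\langle J_{V}\widetilde{K}_{x_{j}},f\rangle_{CM}=\langle K_{x_{j}},f\rangle_{CM}=f(x_{j})$ to translate orthogonality into vanishing on $V$. The paper's proof records only this identity and says ``the desired result follows,'' so you have simply spelled out the steps in greater detail.
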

\begin{proof}
The same as in the proof for the case of Brownian bridge: 
\[
\left\langle J\widetilde{K}_{x_{j}},f\right\rangle _{CM\left(\Omega\right)}=f\left(x_{j}\right),\quad\forall f\in CM\left(\Omega\right).
\]
The desired result follows from this.\end{proof}
\begin{defn}
Set
\begin{equation}
\begin{split}res\left(x_{i},x_{j}\right) & =R^{\left(c\right)}\left(x_{i},x_{j}\right)\\
 & =\left\Vert K_{x_{i}}-K_{x_{j}}\right\Vert _{\mathscr{H}_{E}}^{2}\\
 & =K\left(x_{i},x_{i}\right)+K\left(x_{j},x_{j}\right)-2K\left(x_{i},x_{j}\right)
\end{split}
\label{eq:L1}
\end{equation}
(We proved in \cite{JP10} that $res\left(x_{i},x_{j}\right)$ in
(\ref{eq:L1}) indeed defines a metric on $V$; the so called \emph{resistance
metric}.) Let 
\[
\left(\Delta_{V}f\right)\left(x_{i}\right):=\sum_{x_{j}\sim x_{i}}\frac{1}{res\left(x_{i},x_{j}\right)}\left(f\left(x_{i}\right)-f\left(x_{j}\right)\right)
\]
be the graph-Laplacian; where $x_{j}\sim x_{j}$ iff $\left(x_{i}x_{j}\right)\in E$. \end{defn}
\begin{thm}
\label{thm:main2}Let $V\subset\Omega$ and $\Delta_{0}$, $K$, $\mathscr{H}\left(V\right)$
be as above; assume (\ref{eq:L0}), i.e., finite neighbors in $G$.
Then $\delta_{x_{i}}\in\mathscr{H}\left(V\right)$, and 
\[
\left(\Delta_{V}f\right)\left(x_{i}\right)=\left\langle \delta_{x_{i}},f\right\rangle _{\mathscr{H}\left(V\right)},\quad\forall f\in\mathscr{H}\left(V\right).
\]
\end{thm}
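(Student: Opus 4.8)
The plan is to reduce the statement to the infinite-network machinery of \secref{net} by identifying the restriction RKHS $\mathscr{H}\left(V\right)=\mathscr{H}(K^{\left(V\right)})$, $K^{\left(V\right)}:=K\big|_{V\times V}$, with the energy Hilbert space $\mathscr{H}_{E}\left(G\right)$ of the weighted graph $\left(V,E,c\right)$, where $c_{ij}:=1/res\left(x_{i},x_{j}\right)$ and $res$ is the metric (\ref{eq:L1}). This identification is the substantive step. As noted just before the Lemma preceding the theorem, $\widetilde{K}_{x_{i}}:=K\left(x_{i},\cdot\right)\big|_{V}$ lies in $\mathscr{H}_{E}\left(G\right)$ and satisfies $\left\langle \widetilde{K}_{x_{i}},f\right\rangle _{\mathscr{H}_{E}}=f\left(x_{i}\right)$ for all $f\in\mathscr{H}_{E}\left(G\right)$, so $\widetilde{K}$ is the reproducing kernel of $\mathscr{H}_{E}\left(G\right)$ with $\widetilde{K}_{x_{i}}\left(x_{j}\right)=K\left(x_{i},x_{j}\right)=K^{\left(V\right)}\left(x_{i},x_{j}\right)$; since a positive definite kernel determines its RKHS uniquely, $\mathscr{H}_{E}\left(G\right)=\mathscr{H}\left(V\right)$ as spaces of functions on $V$, with equal norms. (Concretely: the isometry $J_{V}:\widetilde{K}_{x_{j}}\mapsto K_{x_{j}}$ of the preceding Lemma has range $\mathscr{H}_{CM}\left(\Omega\right)\ominus\mathscr{N}_{V}$, with $\mathscr{N}_{V}=\{f\in\mathscr{H}_{CM}:f\big|_{V}\equiv0\}$, while \lemref{sp} realises $\mathscr{H}\left(V\right)$ isometrically as the same subspace via $f\mapsto f\big|_{V}$, under which $K_{x}\mapsto K_{x}^{\left(V\right)}=\widetilde{K}_{x}$; here $K_{x}\perp\mathscr{N}_{V}$ because $\left\langle K_{x},f\right\rangle _{CM}=f\left(x\right)=0$ for $f\in\mathscr{N}_{V}$, $x\in V$.)

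Granting this, the first assertion is immediate: by the criterion recorded after \defref{st0}, $\mathrm{class}\left(\delta_{x_{i}}\right)\in\mathscr{H}_{E}\left(G\right)$ if and only if $c\left(x_{i}\right):=\sum_{x_{j}\sim x_{i}}c_{ij}<\infty$, and here the sum is finite by hypothesis (\ref{eq:L0}), while each term $c_{ij}=1/res\left(x_{i},x_{j}\right)$ is finite since $res$ is a genuine metric on $V$, so $res\left(x_{i},x_{j}\right)>0$ for $x_{i}\neq x_{j}$ (see Corollary~\ref{cor:lap2} and \cite{JP10}). Hence $\delta_{x_{i}}\in\mathscr{H}_{E}\left(G\right)=\mathscr{H}\left(V\right)$, with $\left\Vert \delta_{x_{i}}\right\Vert _{\mathscr{H}\left(V\right)}^{2}=c\left(x_{i}\right)$. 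This also supplies the proof, deferred from \secref{dRKHS}, of the discrete statement in \thmref{main}.

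For the Laplacian identity, I would apply Corollary~\ref{cor:lap1} to the network $\left(V,E,c\right)$: for all $f\in\mathscr{H}_{E}\left(G\right)$ and all $x_{i}\in V$,
\[
\left\langle \delta_{x_{i}},f\right\rangle _{\mathscr{H}_{E}\left(G\right)}=\bigl(\Delta^{\left(c\right)}f\bigr)\left(x_{i}\right)=\sum_{x_{j}\sim x_{i}}c_{ij}\bigl(f\left(x_{i}\right)-f\left(x_{j}\right)\bigr),
\]
where $\Delta^{\left(c\right)}$ is the graph Laplacian (\ref{eq:g2}); since $c_{ij}=1/res\left(x_{i},x_{j}\right)$ the right-hand side equals $\left(\Delta_{V}f\right)\left(x_{i}\right)$, and transporting along $\mathscr{H}_{E}\left(G\right)=\mathscr{H}\left(V\right)$ (the identity on functions) gives $\left(\Delta_{V}f\right)\left(x_{i}\right)=\left\langle \delta_{x_{i}},f\right\rangle _{\mathscr{H}\left(V\right)}$ for all $f\in\mathscr{H}\left(V\right)$. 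As an internal consistency check one may instead stay inside $\mathscr{H}\left(V\right)$: by \propref{dh5} and \thmref{s2-8} the operator $\Delta$ given by $\left(\Delta f\right)\left(x\right)=\left\langle \delta_{x},f\right\rangle _{\mathscr{H}\left(V\right)}$ is densely defined with $\Delta K_{x_{j}}^{\left(V\right)}=\delta_{x_{j}}$; one checks $\Delta_{V}K_{x_{j}}^{\left(V\right)}=\delta_{x_{j}}$ as well (the network Green's-function relation (\ref{eq:g6}) read through the identification), so $\Delta$ and $\Delta_{V}$ agree on the dense domain $\mathrm{span}\{K_{x_{j}}^{\left(V\right)}\}$, hence everywhere since point evaluations are $\mathscr{H}\left(V\right)$-continuous.

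The load-bearing step is the identification $\mathscr{H}\left(V\right)=\mathscr{H}_{E}\left(G\right)$ of the first paragraph — equivalently, that the graph-energy form on functions on $V$, assembled from the conductances $c_{ij}=1/res\left(x_{i},x_{j}\right)$, reproduces exactly the continuous Green's-function values $K\left(x_{i},x_{j}\right)$. This is the content of the Lemma preceding the theorem, and to make it airtight one should spell out (i) that the resistance metric $R^{\left(c\right)}$ of $\left(G,c\right)$ coincides on $V$ with $\left(x,y\right)\mapsto\left\Vert K_{x}-K_{y}\right\Vert _{\mathscr{H}_{CM}}^{2}$ (so the sampling $V$ and the edge set $E$ must be compatible with $K$), and (ii) that a reproducing kernel is reconstructed from its (conditionally negative definite) metric through the Schoenberg-type formula of Corollary~\ref{cor:lap2}, which then forces the energy kernel and $K^{\left(V\right)}$ to agree. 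Everything after that is bookkeeping with \secref{net}.
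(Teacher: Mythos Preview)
Your proposal is correct and follows essentially the same route as the paper: both arguments reduce to the network machinery of \secref{net} via the identification $\mathscr{H}(V)=\mathscr{H}_{E}(G)$ asserted in the text preceding the theorem, and then invoke \corref{lap1}. The paper's proof simply writes down the explicit representation $\delta_{x_{i}}=c(x_{i})K_{x_{i}}-\sum_{x_{j}\sim x_{i}}c_{ij}K_{x_{j}}$ (i.e., formula (\ref{eq:g121}) transported through that identification) and notes it is a finite sum by (\ref{eq:L0}); you unpack the same content more carefully and, to your credit, correctly isolate the identification $\mathscr{H}(V)=\mathscr{H}_{E}(G)$ as the genuine load-bearing step that the paper states rather than proves.
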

\begin{proof}
Follows from earlier analysis. One shows that 
\[
\delta_{x_{i}}=\left(\sum_{x_{l}\sim x_{i}}\frac{1}{res\left(x_{i},x_{l}\right)}\right)K_{x_{i}}-\sum_{x_{j}\sim x_{i}}\frac{1}{res\left(x_{i},x_{j}\right)}K_{x_{j}}\in\mathscr{H}\left(V\right).
\]
(It is a finite sum based on the assumption (\ref{eq:L0}).)\end{proof}
\begin{rem}
Let $\Omega\subset\mathbb{R}^{\nu}$ as above. If $\nu>1$, then the
kernel $K\left(s,t\right)$, $\left(s,t\right)\in\Omega\times\Omega$,
has a singularity at $x=y$, by contrast to $\nu=1$ (see below.)
But we can still construct discrete graph Laplacians. 
\end{rem}
Fix $\Omega$, and let $K$ be the kernel s.t. 
\begin{align*}
 & K\left(x,\cdot\right)\big|_{\partial\Omega}\equiv0,\quad\forall x\in\Omega,\\
 & \Delta K=\delta\left(s,y\right)=\delta\left(x-y\right),\quad\left(x,y\right)\in\Omega\times\Omega
\end{align*}
where $\Delta=-\nabla^{2}$ as a s.a. operator on $L^{2}\left(\Omega\right)$,
with 
\[
dom\left(\Delta\right)=\left\{ f\in L^{2}\left(\Omega\right)\:\big|\:\Delta f\in L^{2}\left(\Omega\right),\:f\big|_{\partial\Omega}=0\right\} .
\]
So $K$ satisfies the Dirichlet boundary condition, but $K$ has a
singularity at $x=y$, i.e., at the diagonal of $\Omega\times\Omega$
if $\nu>1$. 

Fix $V=\left\{ x_{i}\right\} _{1}^{\infty}\subset\Omega$, discrete.
Fix $E\subset V\times V\backslash\left(\text{diagonal}\right)$ s.t.
$\forall x_{i}\in V$, $\#\left\{ j\:\big|\:x_{j}\sim x_{i}\right\} <\infty$,
finite neighbor sets. Set 
\[
\left(\Delta_{V}f\right)\left(x_{i}\right)=\sum_{\underset{x_{i}\sim x_{j}}{x_{j}}}\frac{1}{res\left(x_{i},x_{j}\right)}\left(f\left(x_{i}\right)-f\left(x_{j}\right)\right)
\]
and get the corresponding energy-Hilbert space
\[
\mathscr{H}_{E}\left(V\right)=\left\{ f\;\text{on \ensuremath{V}}\:\big|\:\underset{x_{i}\sim x_{j}}{\sum\sum}\frac{1}{res\left(x_{i},x_{j}\right)}\left|f\left(x_{i}\right)-f\left(x_{j}\right)\right|^{2}<\infty\right\} .
\]

\begin{example}
\label{exa:bb}For $\nu=1$, let 
\begin{equation}
\begin{split} & k=s\wedge t-st\\
 & k_{s}\left(t\right):=k\left(s,t\right).
\end{split}
\label{eq:dk1}
\end{equation}
Then, 
\begin{equation}
\begin{split}k_{s}'\left(t\right) & =\left(1-s\right)\chi_{\left[0,s\right]}\left(t\right)-s\chi_{\left[s,1\right]}\left(t\right)\\
-k_{s}''\left(t\right) & =\delta_{s}\left(t\right)=\delta_{s,t}
\end{split}
\label{eq:dk2}
\end{equation}

\begin{figure}
\begin{tabular}{c}
\includegraphics[width=0.35\columnwidth]{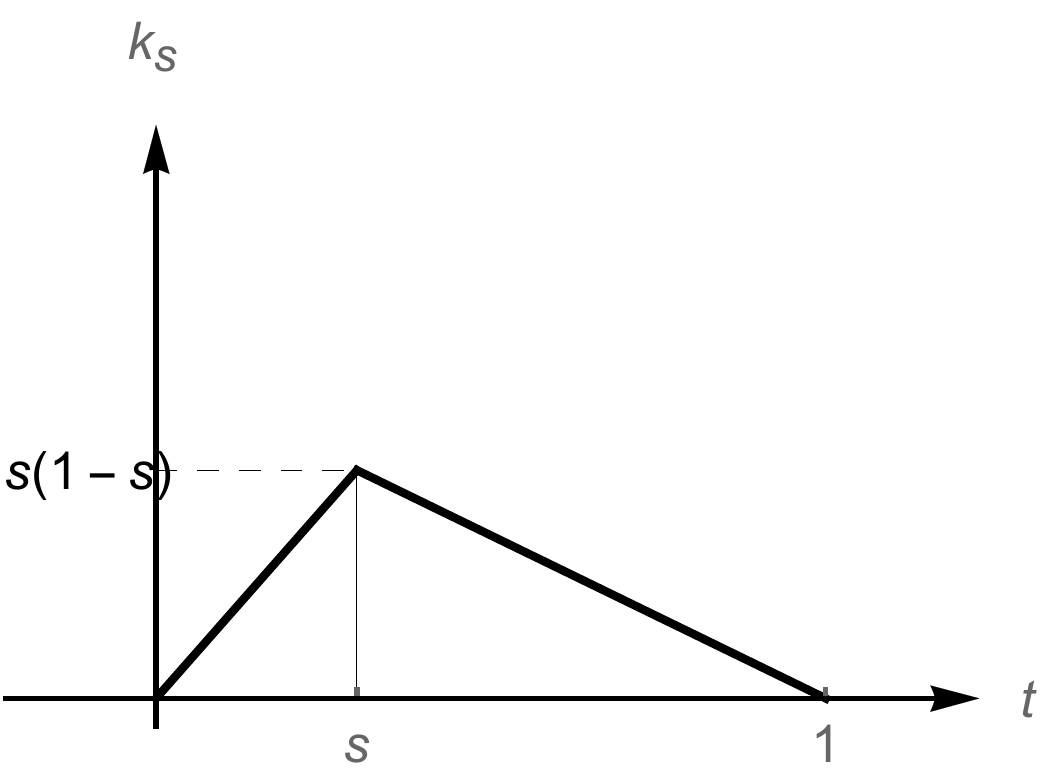}\tabularnewline
\includegraphics[width=0.35\columnwidth]{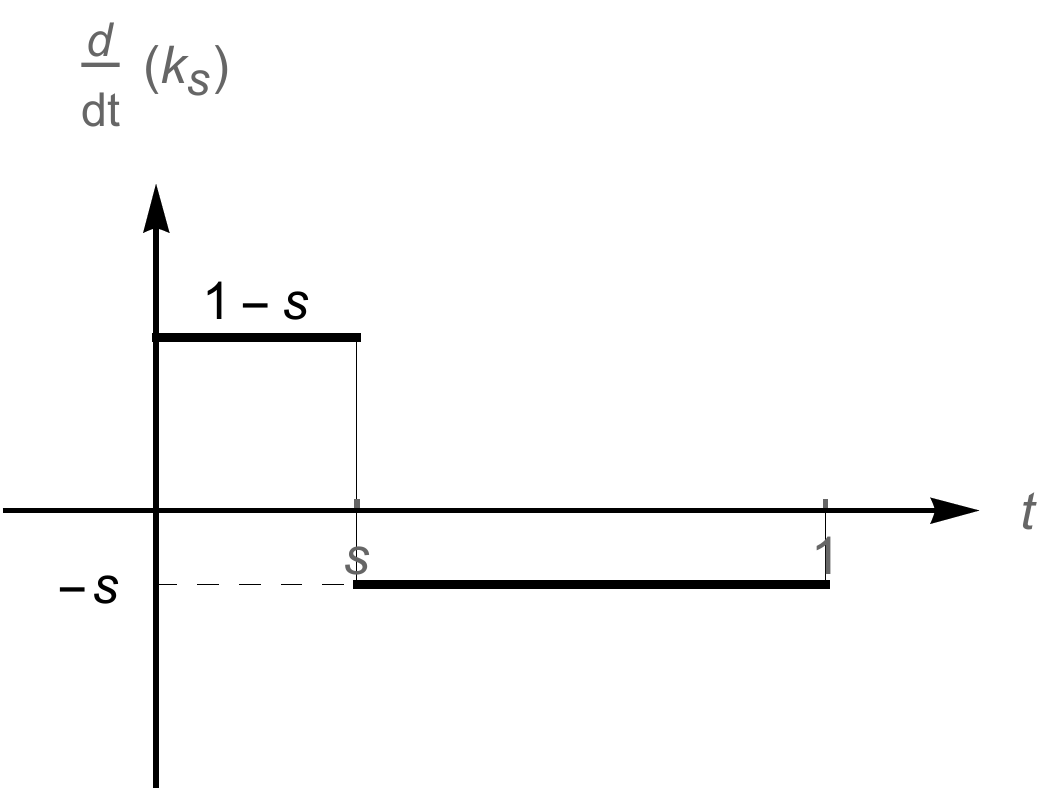}\tabularnewline
\end{tabular}

\protect\caption{\label{fig:bbf}$\nu=1$, $k_{s}\left(t\right)=k\left(s,t\right)$.
(A kernel function and its derivative.)}

\end{figure}

\end{example}

\begin{example}
For $\nu>1$, set $\Delta=-\nabla^{2}=\sum_{1}^{\nu}\left(\frac{\partial}{\partial x_{j}}\right)^{2}$,
where 
\begin{equation}
\nabla=grad=\left(\frac{\partial}{\partial x_{1}},\cdots,\frac{\partial}{\partial x_{\nu}}\right).\label{eq:dk3}
\end{equation}
Set 
\begin{equation}
\begin{split}C_{\nu} & =\begin{cases}
\frac{1}{2\pi} & \nu=2\\
\frac{\nu-2}{\left|S^{\nu-1}\right|} & \nu>2
\end{cases}\\
\Omega & =\left\{ x\in\mathbb{R}^{\nu}\:\big|\:\left|x\right|<1\right\} \\
S^{\nu-1} & =\left\{ x\in\mathbb{R}^{\nu}\:\big|\:\left|x\right|^{2}=\sum\nolimits _{j=1}^{\nu}x_{j}^{2}=1\right\} 
\end{split}
\label{eq:dk4}
\end{equation}
and, $\forall\left(x,y\right)\in\Omega\times\Omega$, 
\begin{equation}
K\left(x,y\right)=\begin{cases}
C_{\nu}\left\{ \frac{1}{\left|x-y\right|^{\nu-2}}-\frac{1}{\left(\left|x\right|\left|x^{*}-y\right|\right)^{\nu-2}}\right\}  & \nu>2\\
\\
\frac{1}{2\pi}\log\left(\frac{\left|x\right|\left|x^{*}-y\right|}{\left|x-y\right|}\right) & \nu=2
\end{cases}\label{eq:dk5}
\end{equation}
where in (\ref{eq:dk5}), we set 
\[
x^{*}:=\begin{cases}
\frac{x}{\left|x\right|^{2}} & x\in\Omega\backslash\left\{ 0\right\} \\
\infty & x=0.
\end{cases}
\]
For $\nu=2$, see Fig \ref{fig:ker}.

\begin{figure}
\includegraphics[width=0.3\columnwidth]{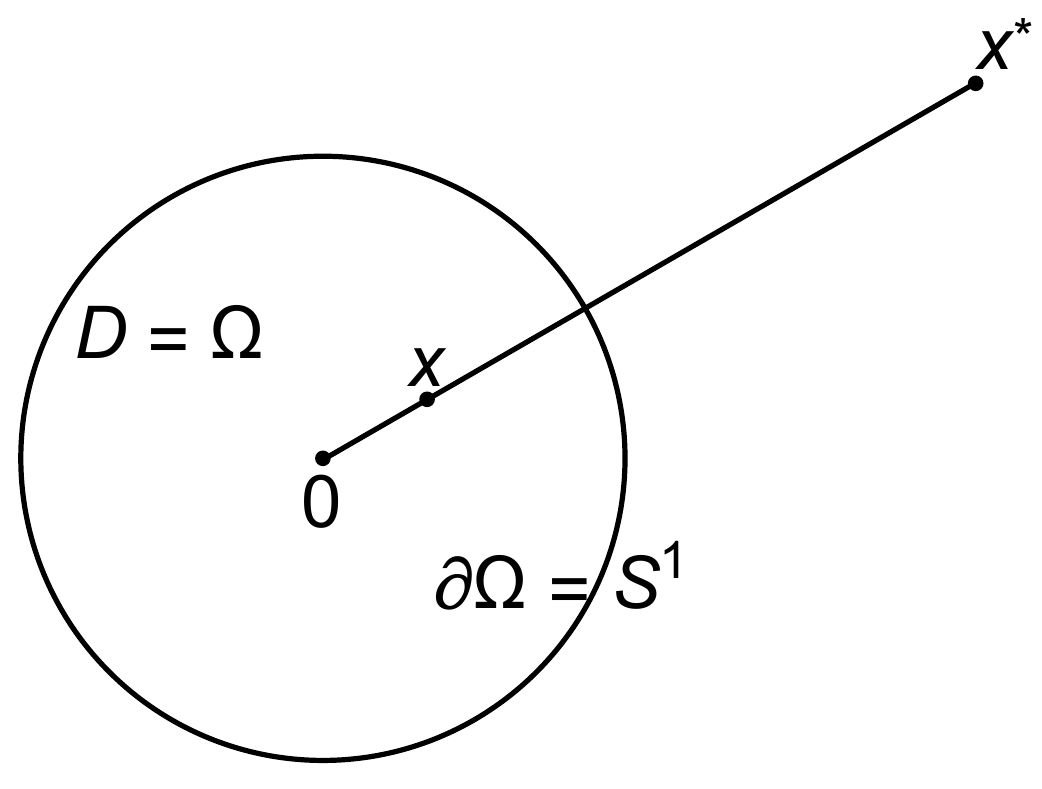}

\begin{align*}
D & =\Omega=\left\{ x\in\mathbb{R}^{\nu}\:\big|\:\left|x\right|<1\right\} \\
\partial\Omega & =S^{1}=\left\{ x\in\mathbb{R}^{\nu}\:\big|\:\left|x\right|=1\right\} 
\end{align*}

\protect\caption{\label{fig:ker}The case of $\nu=2$. $x\protect\mapsto x^{*}$, reflection
around $S^{1}$.}

\end{figure}

Special case of 
\begin{equation}
K=G-\text{Poisson}\left(G\left(x,\cdot\right)\Big|_{\partial\Omega}\right)\label{eq:dk6}
\end{equation}
where 
\begin{equation}
G\left(x,y\right)=\begin{cases}
-\frac{1}{2\pi}\log\left|x-y\right| & \nu=2\\
C_{\nu}\frac{1}{\left|x-y\right|^{\nu-2}} & \nu>2.
\end{cases}\text{ (Newton potential)}\label{eq:dk7}
\end{equation}
\end{example}
\begin{thm}
\label{thm:dk7}For $\Omega\subset\mathbb{R}^{\nu}$, bounded open,
smooth boundary $\partial\Omega$, let $m_{\partial\Omega}$ be the
surface measure (see \cite{MR2301309}), then $\Delta_{0}$ with boundary
condition $f\big|_{\partial\Omega}=0$ with 
\[
dom\Delta_{0}=\left\{ f\in L^{2}\left(\Omega\right)\:\big|\:\Delta f\in L^{2}\left(\Omega\right),\:f\big|_{\partial\Omega}=0\right\} 
\]
is s.a., and with $K$ as in (\ref{eq:dk8}), 
\[
\Delta K=\delta\left(s-y\right),\quad\left(x,y\right)\in\Omega\times\Omega
\]
with $K\left(x,\cdot\right)\big|_{\partial\Omega}=0$, $\forall x\in\Omega$. \end{thm}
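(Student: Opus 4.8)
The statement bundles three things: self-adjointness of $\Delta_{0}$, the explicit description of $\mathrm{dom}(\Delta_{0})$, and the Green's-function identities for $K$. The plan is to treat the operator-theoretic part separately from the PDE part, and then to match the abstract Green's function with the explicit kernels (\ref{eq:dk5})--(\ref{eq:dk7}). For self-adjointness I would realize $\Delta_{0}$ as the Friedrichs extension of $-\nabla^{2}$ on $C_{c}^{\infty}(\Omega)$; this makes $\Delta_{0}$ positive and self-adjoint with form domain $H_{0}^{1}(\Omega)$, as already noted around (\ref{eq:e4})--(\ref{eq:e5}). Since $\partial\Omega$ is smooth, elliptic regularity up to the boundary for the Dirichlet problem gives $\mathrm{dom}(\Delta_{0})=H^{2}(\Omega)\cap H_{0}^{1}(\Omega)$, and this coincides with $\{f\in L^{2}(\Omega):\Delta f\in L^{2}(\Omega),\ f\big|_{\partial\Omega}=0\}$: the inclusion $\subseteq$ is immediate, and $\supseteq$ is interior $H^{2}_{loc}$-regularity combined with boundary regularity coming from the vanishing Dirichlet trace. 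I would state this concisely, citing the standard references already used in the paper (e.g. \cite{Nel57,MR2301309}). Because $\Omega$ is bounded, the Poincaré inequality on $H_{0}^{1}(\Omega)$ (compare (\ref{eq:pc})) gives a spectral gap of $\Delta_{0}$ at $0$, so $K=\Delta_{0}^{-1}$ from (\ref{eq:e6})--(\ref{eq:e13}) is a bounded operator and is the \emph{unique} solution operator for the Dirichlet problem.

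For the Green's-function identities I would use the representation (\ref{eq:dk6}), writing $K(x,y)=G(x,y)-h_{x}(y)$, where $G$ is the Newtonian potential (\ref{eq:dk7}), normalized so that $\Delta_{y}G(x,y)=\delta_{x}(y)$ distributionally, and where, for each fixed $x\in\Omega$, $h_{x}$ is the harmonic function on $\Omega$ with boundary values $h_{x}\big|_{\partial\Omega}=G(x,\cdot)\big|_{\partial\Omega}$. Concretely $h_{x}$ is the Poisson integral of the datum $G(x,\cdot)\big|_{\partial\Omega}$, which is smooth since $x\notin\partial\Omega$, against the harmonic measure of $\Omega$; smoothness of $\partial\Omega$ guarantees this harmonic measure has a density with respect to the surface measure $m_{\partial\Omega}$, so $h_{x}$ exists by solvability of the Dirichlet problem. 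Then, testing against $\varphi\in C_{c}^{\infty}(\Omega)$ and using that $h_{x}$ is real-analytic in $\Omega$, one gets $\Delta_{y}K(x,y)=\Delta_{y}G(x,y)-\Delta_{y}h_{x}(y)=\delta_{x}(y)$ on $\Omega$, while $K(x,y)=G(x,y)-h_{x}(y)=0$ for $y\in\partial\Omega$; these are the two asserted properties, and they are exactly the content of Corollary~\ref{cor:K}. By the uniqueness of the solution operator noted above, this $K$ agrees with the operator kernel of $\Delta_{0}^{-1}$ from (\ref{eq:e6})--(\ref{eq:e13}), which closes the loop with the earlier sections.

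Finally, in the model case $\Omega=\{|x|<1\}$ I would check that the explicit formulas (\ref{eq:dk5}) \emph{are} this $K$. For fixed $x\in\Omega$, the subtracted term there, viewed as a function of $y$, is a constant multiple of the fundamental solution centered at the reflected point $x^{*}=x/|x|^{2}$, hence harmonic on $\Omega$ because $|x^{*}|>1$ puts its pole outside $\overline{\Omega}$; and on $\partial\Omega$ the classical reflection identity $|x|\,|x^{*}-y|=|x-y|$ holds, since $|x|^{2}|x^{*}-y|^{2}=1-2\,x\cdot y+|x|^{2}=|x-y|^{2}$ when $|y|=1$, so the two terms cancel there. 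Thus the subtracted term is precisely $h_{x}$ and (\ref{eq:dk5}) equals $G-h_{x}$; the case $\nu=2$ is the identical computation with $-\tfrac{1}{2\pi}\log|x-y|$ replacing $C_{\nu}|x-y|^{2-\nu}$. The only genuinely analytic step, and the one I would expect to need the most care in writing up, is the identification of $\mathrm{dom}(\Delta_{0})$ via elliptic regularity up to the boundary together with the solvability of the Dirichlet problem on $\Omega$ producing $h_{x}$; everything else is either quoted from the earlier sections (self-adjointness, Poincaré, Corollary~\ref{cor:K}, Lemma~\ref{lem:dRKHS5}) or a direct reflection-formula verification.
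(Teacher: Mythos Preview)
Your proposal is correct and follows essentially the same approach as the paper: the core step is the representation $K(x,y)=G(x,y)-h_{x}(y)$ with $h_{x}$ the Poisson extension of $G(x,\cdot)\big|_{\partial\Omega}$, from which harmonicity of $h_{x}$ gives $\Delta_{y}K=\delta_{x}$ and the boundary limit gives $K(x,\cdot)\big|_{\partial\Omega}=0$. You supply considerably more detail than the paper's short proof (Friedrichs extension for self-adjointness, elliptic regularity for the domain description, the reflection verification of (\ref{eq:dk5}) on the ball, and the identification with $\Delta_{0}^{-1}$ via Poincar\'e), all of which is sound and fills in points the paper leaves implicit.
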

\begin{proof}
For $x,y\in\Omega$, let $P_{y}$ be the Poisson kernel, and set 
\begin{equation}
K\left(x,y\right)=G\left(x,y\right)-P_{y}\left[G\left(x,\cdot\right)\Big|_{\partial\Omega}\right]\label{eq:dk8}
\end{equation}
view $G\left(x,\cdot\right)$ as a function on the boundary $\partial\Omega$.
We have 
\[
P_{y}\left[G\left(x,\cdot\right)\Big|_{\partial\Omega}\right]=\int_{\partial\Omega}P_{y}\left(b\right)G\left(x,b\right)dm_{\partial\Omega}\left(b\right),
\]
where $dm_{\partial\Omega}\left(\cdot\right)$ denotes the standard
measure on the boundary $\partial\Omega$ of $\Omega$; see \cite{MR2301309}. 

Then 
\[
\Omega\ni y\longrightarrow P_{y}\left[G\left(x,\cdot\right)\Big|_{\partial\Omega}\right]
\]
is harmonic in $\Omega$, and $\lim_{y\rightarrow b}P_{y}\left[G\left(x,\cdot\right)\Big|_{\partial\Omega}\right]=G\left(x,b\right)$.
So, from (\ref{eq:dk8}), 
\[
K\left(x,b\right)=0,\quad\forall x\in\Omega,\:b\in\partial\Omega,
\]
and so $K$ in (\ref{eq:dk8}) is the Dirichlet kernel.
\end{proof}

\begin{acknowledgement*}
The co-authors thank the following colleagues for helpful and enlightening
discussions: Professors Daniel Alpay, Sergii Bezuglyi, Ilwoo Cho,
Ka Sing Lau, Paul Muhly, Myung-Sin Song, Wayne Polyzou, Gestur Olafsson,
Keri Kornelson, and members in the Math Physics seminar at the University
of Iowa.

\bibliographystyle{amsalpha}
\bibliography{ref}

\newcommand{\etalchar}[1]{$^{#1}$}
\providecommand{\bysame}{\leavevmode\hbox to3em{\hrulefill}\thinspace}
\providecommand{\MR}{\relax\ifhmode\unskip\space\fi MR }
\providecommand{\MRhref}[2]{%
  \href{http://www.ams.org/mathscinet-getitem?mr=#1}{#2}
}
\providecommand{\href}[2]{#2}
\begin{thebibliography}{ABDdS93}

\bibitem[ABDdS93]{ABDdS93}
Daniel Alpay, Vladimir Bolotnikov, Aad Dijksma, and Henk de~Snoo, \emph{On some
  operator colligations and associated reproducing kernel {H}ilbert spaces},
  Operator extensions, interpolation of functions and related topics ({T}imi\c
  soara, 1992), Oper. Theory Adv. Appl., vol.~61, Birkh\"auser, Basel, 1993,
  pp.~1--27. \MR{1246577 (94i:47018)}

\bibitem[AD92]{AD92}
Daniel Alpay and Harry Dym, \emph{On reproducing kernel spaces, the {S}chur
  algorithm, and interpolation in a general class of domains}, Operator theory
  and complex analysis ({S}apporo, 1991), Oper. Theory Adv. Appl., vol.~59,
  Birkh\"auser, Basel, 1992, pp.~30--77. \MR{1246809 (94j:46034)}

\bibitem[AD93]{AD93}
\bysame, \emph{On a new class of structured reproducing kernel spaces}, J.
  Funct. Anal. \textbf{111} (1993), no.~1, 1--28. \MR{1200633 (94g:46035)}

\bibitem[AJ12]{MR2966130}
Daniel Alpay and Palle E.~T. Jorgensen, \emph{Stochastic processes induced by
  singular operators}, Numer. Funct. Anal. Optim. \textbf{33} (2012), no.~7-9,
  708--735. \MR{2966130}

\bibitem[AJL11]{MR2793121}
Daniel Alpay, Palle Jorgensen, and David Levanony, \emph{A class of {G}aussian
  processes with fractional spectral measures}, J. Funct. Anal. \textbf{261}
  (2011), no.~2, 507--541. \MR{2793121 (2012e:60101)}

\bibitem[AJS14]{MR3231624}
Daniel Alpay, Palle Jorgensen, and Guy Salomon, \emph{On free stochastic
  processes and their derivatives}, Stochastic Process. Appl. \textbf{124}
  (2014), no.~10, 3392--3411. \MR{3231624}

\bibitem[AJSV13]{AJSV13}
Daniel Alpay, Palle Jorgensen, Ron Seager, and Dan Volok, \emph{On discrete
  analytic functions: products, rational functions and reproducing kernels}, J.
  Appl. Math. Comput. \textbf{41} (2013), no.~1-2, 393--426. \MR{3017129}

\bibitem[AJV14]{AJV14}
Daniel Alpay, Palle Jorgensen, and Dan Volok, \emph{Relative reproducing kernel
  {H}ilbert spaces}, Proc. Amer. Math. Soc. \textbf{142} (2014), no.~11,
  3889--3895. \MR{3251728}

\bibitem[Ami78]{Ami78}
Charles~J. Amick, \emph{Some remarks on {R}ellich's theorem and the
  {P}oincar\'e inequality}, J. London Math. Soc. (2) \textbf{18} (1978), no.~1,
  81--93. \MR{502660 (80a:46016)}

\bibitem[Aro43]{Aro43}
N.~Aronszajn, \emph{La th\'eorie des noyaux reproduisants et ses applications.
  {I}}, Proc. Cambridge Philos. Soc. \textbf{39} (1943), 133--153. \MR{0008639
  (5,38e)}

\bibitem[Aro48]{Aro48}
\bysame, \emph{Reproducing and pseudo-reproducing kernels and their application
  to the partial differential equations of physics}, Studies in partial
  differential equations. Technical report 5, preliminary note, Harvard
  University, Graduate School of Engineering., 1948. \MR{0031663 (11,187b)}

\bibitem[BCF{\etalchar{+}}07]{BCF07}
Brighid Boyle, Kristin Cekala, David Ferrone, Neil Rifkin, and Alexander
  Teplyaev, \emph{Electrical resistance of {$N$}-gasket fractal networks},
  Pacific J. Math. \textbf{233} (2007), no.~1, 15--40. \MR{2366367
  (2010d:28006)}

\bibitem[BTA04]{BTA04}
Alain Berlinet and Christine Thomas-Agnan, \emph{Reproducing kernel {H}ilbert
  spaces in probability and statistics}, Kluwer Academic Publishers, Boston,
  MA, 2004, With a preface by Persi Diaconis. \MR{2239907 (2007b:62006)}

\bibitem[CJ11]{CJ11}
Ilwoo Cho and Palle E.~T. Jorgensen, \emph{Free probability induced by electric
  resistance networks on energy {H}ilbert spaces}, Opuscula Math. \textbf{31}
  (2011), no.~4, 549--598. \MR{2823480 (2012h:05341)}

\bibitem[CMPY08]{CMPY08}
Andrea Caponnetto, Charles~A. Micchelli, Massimiliano Pontil, and Yiming Ying,
  \emph{Universal multi-task kernels}, J. Mach. Learn. Res. \textbf{9} (2008),
  1615--1646. \MR{2426053 (2010b:68130)}

\bibitem[CS02]{CS02}
Felipe Cucker and Steve Smale, \emph{On the mathematical foundations of
  learning}, Bull. Amer. Math. Soc. (N.S.) \textbf{39} (2002), no.~1, 1--49
  (electronic). \MR{1864085 (2003a:68118)}

\bibitem[DG13]{MR3096457}
Marta D'Elia and Max Gunzburger, \emph{The fractional {L}aplacian operator on
  bounded domains as a special case of the nonlocal diffusion operator},
  Comput. Math. Appl. \textbf{66} (2013), no.~7, 1245--1260. \MR{3096457}

\bibitem[DL54]{DL54}
J.~Deny and J.~L. Lions, \emph{Les espaces du type de {B}eppo {L}evi}, Ann.
  Inst. Fourier, Grenoble \textbf{5} (1953--54), 305--370 (1955). \MR{0074787
  (17,646a)}

\bibitem[Fer03]{Fer03}
Xavier Fernique, \emph{Extension du th\'eor\`eme de {C}ameron-{M}artin aux
  translations al\'eatoires}, Ann. Probab. \textbf{31} (2003), no.~3,
  1296--1304. \MR{1988473 (2004f:60087)}

\bibitem[HH93]{HH93}
Takeyuki Hida and Masuyuki Hitsuda, \emph{Gaussian processes}, Translations of
  Mathematical Monographs, vol. 120, American Mathematical Society, Providence,
  RI, 1993, Translated from the 1976 Japanese original by the authors.
  \MR{1216518 (95j:60057)}

\bibitem[HKL{\etalchar{+}}14]{HKL14}
S.~{Haeseler}, M.~{Keller}, D.~{Lenz}, J.~{Masamune}, and M.~{Schmidt},
  \emph{{Global properties of Dirichlet forms in terms of Green's formula}},
  ArXiv e-prints (2014).

\bibitem[HN14]{MR3201917}
Haakan Hedenmalm and Pekka~J. Nieminen, \emph{The {G}aussian free field and
  {H}adamard's variational formula}, Probab. Theory Related Fields \textbf{159}
  (2014), no.~1-2, 61--73. \MR{3201917}

\bibitem[HQKL10]{MR2607639}
Minh Ha~Quang, Sung~Ha Kang, and Triet~M. Le, \emph{Image and video
  colorization using vector-valued reproducing kernel {H}ilbert spaces}, J.
  Math. Imaging Vision \textbf{37} (2010), no.~1, 49--65. \MR{2607639
  (2011k:94032)}

\bibitem[JP10]{JP10}
Palle E.~T. Jorgensen and Erin Peter~James Pearse, \emph{A {H}ilbert space
  approach to effective resistance metric}, Complex Anal. Oper. Theory
  \textbf{4} (2010), no.~4, 975--1013. \MR{2735315 (2011j:05338)}

\bibitem[JP11a]{MR2862151}
Palle E.~T. Jorgensen and Erin P.~J. Pearse, \emph{Gel\cprime fand triples and
  boundaries of infinite networks}, New York J. Math. \textbf{17} (2011),
  745--781. \MR{2862151 (2012k:05233)}

\bibitem[JP11b]{JP11}
\bysame, \emph{Resistance boundaries of infinite networks}, Random walks,
  boundaries and spectra, Progr. Probab., vol.~64, Birkh\"auser/Springer Basel
  AG, Basel, 2011, pp.~111--142. \MR{3051696}

\bibitem[JP13]{JP13}
\bysame, \emph{A discrete {G}auss-{G}reen identity for unbounded {L}aplace
  operators, and the transience of random walks}, Israel J. Math. \textbf{196}
  (2013), no.~1, 113--160. \MR{3096586}

\bibitem[JP14]{MR3246982}
\bysame, \emph{Spectral comparisons between networks with different conductance
  functions}, J. Operator Theory \textbf{72} (2014), no.~1, 71--86.
  \MR{3246982}

\bibitem[JPT15]{MR3286496}
Palle Jorgensen, Steen Pedersen, and Feng Tian, \emph{Spectral theory of
  multiple intervals}, Trans. Amer. Math. Soc. \textbf{367} (2015), no.~3,
  1671--1735. \MR{3286496}

\bibitem[JT15]{JT15}
P.~{Jorgensen} and F.~{Tian}, \emph{{Discrete reproducing kernel Hilbert
  spaces: Sampling and distribution of Dirac-masses}}, ArXiv e-prints (2015).

\bibitem[KH11]{KH11}
Sanjeev Kulkarni and Gilbert Harman, \emph{An elementary introduction to
  statistical learning theory}, Wiley Series in Probability and Statistics,
  John Wiley \& Sons, Inc., Hoboken, NJ, 2011. \MR{2908346}

\bibitem[Kre13]{MR3049934}
Christian Kreuzer, \emph{Reliable and efficient a posteriori error estimates
  for finite element approximations of the parabolic {$p$}-{L}aplacian},
  Calcolo \textbf{50} (2013), no.~2, 79--110. \MR{3049934}

\bibitem[LB04]{MR2089140}
Yi~Lin and Lawrence~D. Brown, \emph{Statistical properties of the method of
  regularization with periodic {G}aussian reproducing kernel}, Ann. Statist.
  \textbf{32} (2004), no.~4, 1723--1743. \MR{2089140 (2006a:62053)}

\bibitem[LP11]{MR2975345}
Sneh Lata and Vern Paulsen, \emph{The {F}eichtinger conjecture and reproducing
  kernel {H}ilbert spaces}, Indiana Univ. Math. J. \textbf{60} (2011), no.~4,
  1303--1317. \MR{2975345}

\bibitem[Nas84]{MR741527}
Z.~Nashed, \emph{Operator parts and generalized inverses of linear manifolds
  with applications}, Trends in theory and practice of nonlinear differential
  equations ({A}rlington, {T}ex., 1982), Lecture Notes in Pure and Appl. Math.,
  vol.~90, Dekker, New York, 1984, pp.~395--412. \MR{741527 (85f:47002)}

\bibitem[Nel57]{Nel57}
Edward Nelson, \emph{Kernel functions and eigenfunction expansions}, Duke Math.
  J. \textbf{25} (1957), 15--27. \MR{0091442 (19,969f)}

\bibitem[NS13]{MR3024465}
M.~Zuhair Nashed and Qiyu Sun, \emph{Function spaces for sampling expansions},
  Multiscale signal analysis and modeling, Springer, New York, 2013,
  pp.~81--104. \MR{3024465}

\bibitem[OS05]{OS05}
Kasso~A. Okoudjou and Robert~S. Strichartz, \emph{Weak uncertainty principles
  on fractals}, J. Fourier Anal. Appl. \textbf{11} (2005), no.~3, 315--331.
  \MR{2167172 (2006f:28011)}

\bibitem[SS13]{MR3101840}
Oded Schramm and Scott Sheffield, \emph{A contour line of the continuum
  {G}aussian free field}, Probab. Theory Related Fields \textbf{157} (2013),
  no.~1-2, 47--80. \MR{3101840}

\bibitem[SZ09]{SZ09}
Steve Smale and Ding-Xuan Zhou, \emph{Online learning with {M}arkov sampling},
  Anal. Appl. (Singap.) \textbf{7} (2009), no.~1, 87--113. \MR{2488871
  (2010i:60021)}

\bibitem[Tr{\`e}06]{MR2301309}
Fran{\c{c}}ois Tr{\`e}ves, \emph{Basic linear partial differential equations},
  Dover Publications, Inc., Mineola, NY, 2006, Reprint of the 1975 original.
  \MR{2301309 (2007k:35004)}

\bibitem[Vul13]{MR3091062}
Mirjana Vuleti{\'c}, \emph{The {G}aussian free field and strict plane
  partitions}, 25th {I}nternational {C}onference on {F}ormal {P}ower {S}eries
  and {A}lgebraic {C}ombinatorics ({FPSAC} 2013), Discrete Math. Theor. Comput.
  Sci. Proc., AS, Assoc. Discrete Math. Theor. Comput. Sci., Nancy, 2013,
  pp.~1041--1052. \MR{3091062}

\bibitem[ZXZ09]{MR2579912}
Haizhang Zhang, Yuesheng Xu, and Jun Zhang, \emph{Reproducing kernel {B}anach
  spaces for machine learning}, J. Mach. Learn. Res. \textbf{10} (2009),
  2741--2775. \MR{2579912 (2011c:62219)}

\bibitem[ZXZ12]{MR2913695}
Haizhang Zhang, Yuesheng Xu, and Qinghui Zhang, \emph{Refinement of
  operator-valued reproducing kernels}, J. Mach. Learn. Res. \textbf{13}
  (2012), 91--136. \MR{2913695}

\end{thebibliography}
\end{acknowledgement*}

\end{document}